\newcommand{\eqand}{\ensuremath{\quad \textrm{and} \quad}}
\newcommand{\follows}{\ensuremath{\Rightarrow}}
\newcommand{\ld}{\ensuremath{,\ldots,}}
\newcommand{\ssq}{\ensuremath{\subseteq}}
\newcommand{\smin}{\ensuremath{\setminus}}
\newcommand{\eps}{\ensuremath{\varepsilon}}
\newcommand{\inte}{\ensuremath{\mathrm{int}}}
\newcommand{\closure}{\ensuremath{\mathrm{cl}}}
\newcommand{\kreis}{\ensuremath{\mathbb{T}^{1}}}
\newcommand{\ntorus}[1][2]{\ensuremath{\mathbb{T}^{#1}}}
\newcommand{\sltr}{\ensuremath{\textrm{SL}(2,\mathbb{R})}}
\newcommand{\torus}{\ensuremath{\mathbb{T}^2}}
\newcommand{\nfolge}[1]{\ensuremath{(#1)_{n\in\mathbb{N}}}}
\newcommand{\jfolge}[1]{\ensuremath{(#1)_{j\in\mathbb{N}}}}
\newcommand{\twomatrix}[4]{\ensuremath{\left(\begin{array}{cc} #1 & #2 \\ #3 &
      #4 \end{array}\right)}}
\newcommand{\alphlist}{\begin{list}{(\alph{enumi})}{\usecounter{enumi}\setlength{\parsep}{2pt}
      \setlength{\itemsep}{1pt} \setlength{\topsep}{5pt}
      \setlength{\partopsep}{3pt}}}
\newcommand{\arablist}{\begin{list}{(\arabic{enumi})}{\usecounter{enumi}\setlength{\parsep}{2pt}
          \setlength{\itemsep}{1pt} \setlength{\topsep}{5pt}
          \setlength{\partopsep}{3pt}}}
\newcommand{\romanlist}{\begin{list}{(\roman{enumi})}{\usecounter{enumi}\setlength{\parsep}{2pt}
              \setlength{\itemsep}{1pt} \setlength{\topsep}{5pt}
              \setlength{\partopsep}{3pt}}}
\newcommand{\Romanlist}{\begin{list}{(\Roman{enumi})}{\usecounter{enumi}\setlength{\parsep}{2pt}
              \setlength{\itemsep}{1pt} \setlength{\topsep}{5pt}
              \setlength{\partopsep}{3pt}}}
\newcommand{\bulletlist}{\begin{list}{$\bullet$}{\setlength{\parsep}{2pt}
                \setlength{\itemsep}{1pt} \setlength{\topsep}{5pt}
                \setlength{\partopsep}{3pt}\setlength{\leftmargin}{15pt}}} 
\newcommand{\Alphlist}{\begin{list}{(\Alph{enumi})}{\usecounter{enumi}\setlength{\parsep}{2pt}
      \setlength{\itemsep}{1pt} \setlength{\topsep}{5pt}
      \setlength{\partopsep}{3pt}}}
 \newcommand{\listend}{\end{list}}
\newcommand{\T}{\ensuremath{\mathbb{T}}}
\newcommand{\N}{\ensuremath{\mathbb{N}}} 
\newcommand{\R}{\ensuremath{\mathbb{R}}}
\newcommand{\Z}{\ensuremath{\mathbb{Z}}}
\newcommand{\Q}{\ensuremath{\mathbb{Q}}}
\newcommand{\A}{\ensuremath{\mathbb{A}}}
\newcommand{\cA}{\mathcal{A}}
\newcommand{\cB}{\mathcal{B}}
\newcommand{\cC}{\mathcal{C}}
\newcommand{\cD}{\mathcal{D}}
\newcommand{\cF}{\mathcal{F}}
\newcommand{\cI}{\mathcal{I}}
\newcommand{\cL}{\mathcal{L}}
\newcommand{\cM}{\mathcal{M}}
\newcommand{\cP}{\mathcal{P}}
\newcommand{\cR}{\mathcal{R}}
\newcommand{\cU}{\mathcal{U}}
\newcommand{\cV}{\mathcal{V}}
\newcommand{\cW}{\mathcal{W}}
\newcommand{\cX}{\mathcal{X}}
\newcommand{\cY}{\mathcal{Y}}
\newcommand{\nLim}{\ensuremath{\lim_{n\rightarrow\infty}}}
\newcommand{\ntel}{\ensuremath{\frac{1}{n}}}
\newtheoremstyle{tobthm}{3pt}{3pt}{\itshape}{0pt}{\bfseries}{.}{0.5eM}{}
\theoremstyle{tobthm}
\newtheorem{definition}{Definition}[section]
\newtheorem{thm}[definition]{Theorem}
\newtheorem{lem}[definition]{Lemma}
\newtheorem{Lemma}[definition]{Lemma}
\newtheorem{prop}[definition]{Proposition}
\newtheorem{claim}[definition]{Claim}
\newtheoremstyle{tobrem}{3pt}{3pt}{\normalfont}{0pt}{\bfseries}{.}{0.5em}{}
\theoremstyle{tobrem}
\newtheorem{rem}[definition]{Remark}
\newtheorem{examples}[definition]{Examples}
\numberwithin{equation}{section} \numberwithin{figure}{section}
\title{\large \bf Abundance of mode-locking for quasiperiodically
  forced circle maps} \author{J.~Wang\thanks{TU Dresden, Department of
    Mathematics, 01062 Dresden, Germany. Email: {\tt
      jingwang018@gmail.com}} \ and
  T.~J\"ager\thanks{Friedrich-Schiller-University Jena, Institute of
    Mathematics, 07743 Jena, Germany. Email: {\tt
      Tobias.Oertel-Jaeger@tu-dresden.de}}}
\newcommand{\flthx}{\ensuremath{f_{\tau,\theta}(x)}}
\newcommand{\flth}{\ensuremath{f_{\tau,\theta}}}
\newcommand{\nofolge}[1]{\ensuremath{(#1)_{n\in\mathbb{N}_0}}}
\begin{document}

\setlength{\abovedisplayskip}{1.0ex}
\setlength{\abovedisplayshortskip}{0.8ex}

\setlength{\belowdisplayskip}{1.0ex}
\setlength{\belowdisplayshortskip}{0.8ex}

\maketitle

\abstract{We study the phenomenon of mode-locking in the context of
  quasiperiodically forced non-linear circle maps. As a main result, we show
  that under certain $\cC^1$-open condition on the geometry of twist parameter
  families of such systems, the closure of the union of mode-locking plateaus
  has positive measure. In particular, this implies the existence of infinitely
  many mode-locking plateaus (open Arnold tongues). The proof builds on
  multiscale analysis and parameter exclusion methods in the spirit of Benedicks
  and Carleson, which were previously developed for quasiperiodic \sltr-cocycles
  by Young and Bjerkl\"ov. The methods apply to a variety of examples, including
  a forced version of the classical Arnold circle map. }

\section{Introduction}

The paradigm example for the phenomenon of mode-locking in dynamical
systems is the Arnold circle map
\begin{equation}
  \label{eq:1}
  f_{\alpha,\tau} : \kreis\to\kreis \quad ,
  \quad x\mapsto x+\tau+\frac{\alpha}{2\pi}\sin(2\pi x) \ \bmod 1\ ,
\end{equation}
with non-linearity parameter $\alpha\in[0,1]$ and twist parameter
$\tau\in[0,1]$. If $F_{\alpha,\tau}:\R\to\R$ denotes the canonical
lift of $f_{\alpha,\tau}$, then its rotation number is given by
\begin{equation}
  \label{eq:2}
  \rho(F_{\alpha,\tau})\ =\ \nLim(F_{\alpha,\tau}^n(x)-x)/n\ .
\end{equation}
{\em Mode-locking} in this context refers to the fact that for
certain values of $\alpha$ and $\tau$ the mapping
$\tau'\mapsto\rho(F_{\alpha,\tau'})$ is locally constant in
$\tau'=\tau$. Maximal parameter intervals with constant rotation
number are called {\em mode-locking plateaus}. It is well-known that
for the Arnold circle map and similar parameter families
mode-locking is abundant. More precisely, for all $\alpha\in(0,1]$
the graph of $[0,1]\to[0,1],\ \tau\mapsto \rho(F_{\alpha,\tau})$ is
a {\em devils staircase}, that is, it is locally constant on an open
and dense subset while increasing from $0$ to $1$ over the unit
interval (e.g.~\cite[Chapter 11]{katok/hasselblatt:1997}). As a
basic model, this gives an understanding of mode-locking phenomena
occuring in a variety of real-world situations, including damped
pendula and electronic oscillators
\cite{Ding1986NonlinearOscillators}, heart-beat \cite{arnold:1991}
or paradoxical neural behaviour
\cite{perkeletal:1964,CoombesBressloff1999ModeLocking}.

Generalisations of these results to more complex situations and
higher dimension are certainly highly desirable. However, it turns
out that substantial difficulties have to be overcome in this
direction.  One particular example that demonstrates well this fact
is the so-called {\em Harper map}. It is the real-projective action
of a quasiperiodic \sltr-cocycle associated to the almost-Mathieu
operator, a discrete 1D Schr\"odinger operator with quasiperiodic
potential \cite{johnson/moser:1982,herman:1983,haro/puig:2006}. Due
to an intimate relation between orbits of the Harper map and formal
eigenfunctions of the almost-Mathieu operator, a fruitful blend of
methods from spectral theory, harmonic analysis and dynamical
systems can be used to analyse this model. Nevertheless, it has
taken decades before the existence of a devil's staircase had been
established for all parameters in several steps
\cite{bellissard/simon:1982,puig:2004,avila/jitomirskaya:2005}.

Here, our aim is to show abundance of mode-locking, in a slightly weaker sense
than above, for more general, non-linear {\em quasiperiodically forced (qpf)
  circle diffeomorphisms}. These are skew product diffeomorphisms of the form
\begin{equation}
  \label{eq:3}
  f : \ntorus\to\ntorus \quad , \quad (\theta,x)\mapsto(\theta+\omega,f_\theta(x))\ ,
\end{equation}
where $\omega\in\R\smin\Q$ and all fibre maps
$f_\theta:\kreis\to\kreis$ are circle diffeomorphisms. In addition,
we require $f$ to be homotopic to the identity and denote the class
of such maps by $\cF_\omega$, where $\omega$ refers to the rotation
number on the base. The Harper map mentioned above fits into this
setting, although the particular linear-projective structure makes
it quite special. In the genuinely non-linear case, much fewer
techniques are available for the investigation of such systems, and
the theory is far less developed in general.

Yet, there is one well-established method of choice for the analysis of qpf
circle diffeomorphisms in the hyperbolic regime -- characterised by
non-vanishing Lyapunov exponents -- which is multiscale analysis and parameter
exclusion in the spirit of Benedicks and Carleson
\cite{benedicks/carleson:1991}. In the above context, it was first developed by
Young \cite{young:1997} and Bjerkl\"ov \cite{bjerkloev:2005a,bjerkloev:2005} for
the linear-projective case and later adapted to non-linear systems in
\cite{jaeger:2009a,fuhrmann2013NonsmoothSaddleNodesI}.  Originally, this method
was used to show the non-uniform hyperbolicity of certain quasiperiodic
\sltr-cocycles \cite{young:1997,bjerkloev:2005a}, which corresponds to the
existence of strange non-chaotic attractors in the general case.

The principle goal of the present article is to develop this approach further,
and to show how it can be applied to the problem of mode-locking. The trick
which does this is a somewhat twisted argument. In a first step, parameter
exclusion is used to identity a large set of parameters for which the dynamics
are non-uniformly hyperbolic and minimal and no mode-locking occurs. These are
`good' parameters in the sense of the multiscale analysis scheme. In a second
step, the information obtained in this process is then used to show that a small
shift allows to change from any of these good parameters to a `bad' one,
previously excluded during the parameter elimination, at which the multiscale
analysis scheme terminates at a finite level and the system becomes uniformly
hyperbolic and mode-locked. As a result, this yields that a large set (in the
sense of positive Lebesgue measure) of parameters with non-uniformly hyperbolic
behaviour can be approximated with mode-locked parameters. \medskip

In order to formulate our main result, we denote by $\cP_\omega$ the
set of $\cC^1$-parameter families of qpf circle diffeomorphisms with
parameter $\tau\in\kreis$ and rotation number $\omega$ on the base,
that is
\begin{equation}\label{eq:4}
\cP_\omega\ =\ \left\{(f_\tau)_{\tau\in\kreis}\mid
f_\tau\in\cF_\omega \ \textrm{ for all }\tau \in \kreis \textrm{ and
} (\tau,\theta,x)\mapsto f_\tau(\theta,x) \textrm{ is }
\cC^1\right\} \ .
\end{equation}
Elements of $\cP_\omega$ will be denoted by $\hat f$, that is, $\hat
f=(f_\tau)_{\tau\in\kreis}$.  Any $f\in\cF_\omega$ lifts to a
diffeomorphism $F$ of $\A=\kreis\times\R$ of the form
$F(\theta,x)=(\theta+\omega,F_\theta(x))$, where each fibre map
$F_\theta:\R\to\R$ is a lift of the circle diffeomorphism
$f_\theta$. The {\em fibred rotation number} of $f$ is defined by
\begin{equation}
  \label{eq:5}
  \rho(f) \ = \ \nLim \left(F_\theta^n(x)-x\right)/n \ \bmod 1 \ ,
\end{equation}
where $F_\theta^n=F_{\theta+(n-1)\omega}\circ\ldots\circ F_\theta$.
This limit always exists and is independent of $\theta$ and $x$
\cite{johnson/moser:1982,herman:1983}. Given $\hat f\in\cP_\omega$,
we let
\begin{equation}
  \label{eq:6}
  \cM(\hat f) \ = \ \left\{\tau\in\kreis\mid \tau'\mapsto\rho(f_{\tau'})
 \textrm{ is locally constant in } \tau'=\tau\right\} \ .
\end{equation}
In other words, $\cM(\hat f)$ is the union of mode-locking plateaus of $\hat
f$. It is known that on the set $\cM(\hat f)$ the rotation number only takes
values in the module $\Q+\omega\Q$ \cite{bjerkloev/jaeger:2009}.

An $f$-invariant graph is the graph of a measurable function
$\varphi:\kreis\to\kreis$ which satisfies
\begin{equation}
  \label{eq:7}
  f_{\theta}(\varphi(\theta)) \ = \ \varphi(\theta+\omega) \ .
\end{equation}
Hereby, we will identify invariant graphs which coincide
Lebesgue-almost surely and implicitly speak of equivalence classes.
The (vertical) Lyapunov exponent of an invariant graph is given by
\begin{equation}
  \label{eq:8}
  \lambda(\varphi) \ = \ \int_{\kreis} \log \left|f'_\theta(\varphi(\theta))\right| \ d\theta \ .
\end{equation}
If an invariant graph is non-continuous, meaning that there is no
continuous representative in the equivalence class, and has negative
Lyapunov exponent, then it is called a {\em strange non-chaotic
  attractor (SNA)} \cite{grebogi/ott/pelikan/yorke:1984,keller:1996}.\smallskip

\begin{thm} \label{t.modelocking} Suppose $\omega$ is Diophantine and
  $\delta>0$. Then there exists a $\cC^1$-open set
  $\cU=\cU(\omega,\delta)\ssq\cP_\omega$ such that for all
  $(f_\tau)_{\tau\in\kreis}\in\cU$ there is a set $\Lambda^{\hat f}\ssq\kreis$
  of Lebesgue measure $\geq 1-\delta$ with the properties that \romanlist
\item for all $\tau\in\Lambda^{\hat f}$, the map $f_\tau$ has a
  (unique) SNA and the dynamics of $f_\tau$ are minimal;
\item $\Lambda^{\hat f}\ssq\partial\cM(\hat f)$.  \listend
\end{thm}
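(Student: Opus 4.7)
The proof strategy combines a multiscale analysis and parameter exclusion argument in the spirit of Benedicks--Carleson, as in \cite{young:1997,bjerkloev:2005a,jaeger:2009a,fuhrmann2013NonsmoothSaddleNodesI}, with a twist argument that turns its output into a statement about the boundary of $\cM(\hat f)$. The $\cC^1$-open set $\cU=\cU(\omega,\delta)$ is specified by explicit $\cC^1$-open conditions on $\hat f$ encoding (a) a fibrewise `critical geometry' of the system -- each fibre map $f_{\tau,\theta}$ possesses a well-localised expanding region and controlled critical/contracting regions, sufficient to seed the multiscale induction -- and (b) a quantitative twist condition on the parameter, namely that $\partial_\tau f_\tau(\theta,x)$ is uniformly bounded away from zero and of a definite sign. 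Condition (b) guarantees that critical sets depend $\cC^1$-regularly on $\tau$ and move transversally under variation of $\tau$.

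Given $\hat f\in\cU$, I would carry out the inductive construction of a nested sequence $B_0\supseteq B_1\supseteq\ldots$ of parameter sets in $\kreis$, together with time scales $N_n$ (growing doubly exponentially) and admissible curves $\psi_{n,\tau}$, such that for $\tau\in B_n$ the $N_n$-th fibrewise iterate along any admissible curve is exponentially contracting outside a controlled union $I_n(\tau)$ of critical intervals. The passage from $B_n$ to $B_{n+1}$ excludes those $\tau$ whose scale-$(n+1)$ critical interval would hit a forbidden resonance window dictated by the previous stages; the Diophantine condition on $\omega$ and a geometric decay of widths allow one to ensure $|\kreis\smin B_n|\leq \delta\cdot 2^{-n-1}$, so that $\Lambda^{\hat f}:=\bigcap_{n\in\N}B_n$ has $|\Lambda^{\hat f}|\geq 1-\delta$. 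For $\tau\in\Lambda^{\hat f}$, a limiting argument along admissible curves as in \cite{jaeger:2009a,fuhrmann2013NonsmoothSaddleNodesI} produces a unique measurable invariant graph $\varphi_\tau$ with $\lambda(\varphi_\tau)<0$; non-continuity follows from the persistence of critical configurations at arbitrarily fine scales, while minimality of $f_\tau$ -- and in particular $\tau\notin\cM(\hat f)$ -- follows from the absence of continuous invariant graphs and the structural results of \cite{bjerkloev/jaeger:2009}. This gives (i).

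For (ii), fix $\tau_0\in\Lambda^{\hat f}$ and $\eta>0$. Since $\tau_0\in B_n$ for every $n$, the inductive construction provides a $\cC^1$-family $\tau\mapsto I_n(\tau)$, and the twist condition forces the base-projection of $I_n(\tau)$ to move at uniformly positive speed with respect to $\tau$. Choosing $n$ large enough that the width of $I_n(\tau_0)$, the width of the scale-$(n+1)$ forbidden windows, and their Diophantine $\omega$-spacing are all $\ll\eta$, one finds $\tau_1\in(\tau_0-\eta,\tau_0+\eta)$ for which $I_{n+1}(\tau_1)$ lands in a forbidden window, i.e.\ the induction terminates at stage $n+1$ for $\tau_1$. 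At such a terminal parameter the scale-$n$ estimates already close up a cone-field/graph-transform argument, producing two coinciding continuous invariant graphs of strictly negative Lyapunov exponent and hence uniform hyperbolicity of $f_{\tau_1}$. By \cite{bjerkloev/jaeger:2009}, uniform hyperbolicity in the qpf circle setting entails mode-locking, so $\tau_1\in\cM(\hat f)$. Therefore every neighbourhood of $\tau_0$ meets $\cM(\hat f)$, and combined with $\tau_0\notin\cM(\hat f)$ from (i) this gives $\tau_0\in\overline{\cM(\hat f)}\smin\cM(\hat f)=\partial\cM(\hat f)$.

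The principal obstacle lies in making the twist argument of the previous paragraph fully quantitative and uniform in the scale $n$. One must control the $\cC^1$-dependence of $I_n(\cdot)$ on $\tau$ well enough that its transversal speed dominates the simultaneous shrinkage, with $n$, of the critical intervals themselves, of the vertical spread of the admissible curves, and of the Diophantine resonance windows; equivalently, the constants hidden in the $\cC^1$-open definition of $\cU$ must be coupled tightly to the exclusion budget and to the Diophantine exponent. Propagating such estimates through an infinite sequence of scales while maintaining $\cC^1$-openness of the driving hypotheses is where the technical heart of the proof is concentrated.
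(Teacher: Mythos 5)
The overall architecture of your proposal — use a Benedicks--Carleson style parameter exclusion to build $\Lambda^{\hat f}$ with SNAs, then argue that every $\tau_0\in\Lambda^{\hat f}$ is approximable by mode-locked parameters using the twist — matches the paper. Part (i) is indeed imported essentially verbatim from \cite{Jaeger2012SNADiophantine}, and your last sentence (writing $\tau_0\in\overline{\cM(\hat f)}\setminus\cM(\hat f)=\partial\cM(\hat f)$) is the correct conclusion once the approximation is proved.

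However, there is a genuine gap in part (ii), and it is precisely where the technical heart of the paper lives. You assert that once $\tau_1$ is perturbed so that ``$I_{n+1}(\tau_1)$ lands in a forbidden window, i.e.\ the induction terminates at stage $n+1$'', the ``scale-$n$ estimates already close up a cone-field/graph-transform argument, producing \ldots\ uniform hyperbolicity.'' This step is not justified and, as stated, is false. Violating the slow-recurrence conditions $(\cX)_n$ or $(\cY)_n$ does not make the critical set disappear or the system uniformly hyperbolic; it merely means the inductive control breaks down, so nothing at all is asserted about the dynamics of $f_{\tau_1}$. What the paper actually needs, and proves, is the much more specific criterion (Proposition~\ref{p.modelocking_criterion}): mode-locking follows when the critical set $\cC_n$ is \emph{empty} while $(\cX)_{n-1}$ and $(\cY)_n$ are \emph{still satisfied}. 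Forcing $\cC_n=\emptyset$ by a small perturbation is the whole difficulty, and it is not a consequence of hitting a resonance window.

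The paper's mechanism is quite different from yours and substantially more delicate. One first engineers, via the twist, a \emph{controlled} fast return $I^1_n+k\omega\cap I^2_n\neq\emptyset$ for a $k$ in a precise intermediate range $[2K_{n-1}M_{n-1}+1,\ M_{n-1}^{4q(\nu+1)}]$, while keeping $(\cX)_{n-1}$ and a weakened $(\cY'')_n$ intact (Lemma~\ref{l.perturbation_closereturn}). One then carries out a detailed geometric analysis (Lemmas~\ref{l.inclusions}--\ref{l.extremal_positions}) showing that the iterated strip $f^{M_n}(\cA')$ develops a ``hook'' shape: its intersection with the expanding region splits into two regions $\cL,\cR$ whose vertical gap exceeds the height of the thin strip $f^{-(M_n-k)}(\cB')$, so that by moving $\tau$ within a short interval $\Gamma$ the intersection is forced to pass from $\cR$ to $\cL$, and hence to be empty at some intermediate $\tau$. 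None of this hook geometry — or anything playing its role — appears in your sketch; the cone-field/graph-transform assertion is a placeholder for the part that actually needs to be proved. A minor further inaccuracy: ``two coinciding continuous invariant graphs of strictly negative Lyapunov exponent'' does not describe uniform hyperbolicity; what one obtains at the terminal parameter is a single attracting continuous invariant graph, which by \cite{bjerkloev/jaeger:2009} already forces mode-locking.
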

For a suitable $\cC^1$-open set $\cU\ssq\cP_\omega$, the existence of a set
$\Lambda^{\hat f}$ with property (i) has already been established in
\cite{Jaeger2012SNADiophantine}. Hence, the crucial point here is to show that
this set from \cite{Jaeger2012SNADiophantine} is contained in the boundary of
the union of mode-locking plateaus. The proof is based on the above-mentioned
multiscale analysis scheme from
\cite{young:1997,bjerkloev:2005a,jaeger:2009a}.\smallskip

We note that (ii) implies the existence of infinitely many open
mode-locking plateaus. Yet, at the same time these only take a very
small proportion of the parameter space, since the set $\Lambda^{\hat
  f}$ already accounts for measure $1-\delta$. This agrees with the
fact that an apparent `vanishing' of the mode-locking plateaus, coming
along with the occurrence of SNA, has been reported in numerical
studies
\cite{feudel/kurths/pikovsky:1995,glendinning/feudel/pikovsky/stark:2000}.
(However, it must be emphasized that it was left open by the authors
whether or not this observation is a numerical artifact.)  The
explanation prompted by Theorem~\ref{t.modelocking} is that the
majority of mode-locking plateaus persist, but simply become too small
to be detected numerically. The collapse of single plateaus has been
described in \cite{jaeger:2009a}, in contrast to the situation for the
unforced Arnold circle map.

The main aim of the present work is to show how multiscale analysis methods can
be applied to mode-locking problems in the non-linear setting. We believe that
it is possible to go further in this direction and to combine the presented
arguments with recent work by Bjerkl\"ov \cite{Bjerklov2014SchroedingerAHP}, who
extends the multiscale analysis of \cite{bjerkloev:2005a,bjerkloev:2005} to all
parameters, in order to prove the existence of a devil's staircase under similar
conditions as above. For the special case of quasiperiodic Schr\"odinger
cocycles with $\cC^2$-potential, such a result has been announced recently by
Wang and Zhang \cite{WangZhang2013Schroedinger,WangZhang2014CantorSpectrum}. In
this setting, however, results on mode-locking have also been established
earlier by different methods ({\em Ten Martini Problem},
  \cite{bellissard/simon:1982,puig:2004,avila/jitomirskaya:2005}).  \smallskip

The set $\cU$ in Theorem~\ref{t.modelocking} is characterised
explicitely by a number of $\cC^1$-estimates, which are stated in
Section~\ref{Quantitative}. This is important in the context of
applications, since it allows to check whether a given parameter
family belongs to the set $\cU$ or not. Thus, it can be shown that
the assertions of the theorem hold for specific examples.
\begin{examples}
  \alphlist
\item First, the above statement can easily be applied to parameter
  families of additively forced circle diffeomorphisms of the form
  \begin{equation}
    \label{eq:14}
   f_\tau(\theta,x) \ = \ (\theta+\omega,h(x)+\tau+V(\theta)) \ ,
  \end{equation}
  provided the circle diffeomorphism $h:\kreis\to\kreis$ and the
  forcing function $V:\kreis\to\kreis$ have suitable geometric
  properties. In order to give some explicit examples, suppose $p\geq
  2$, let $a_p(x)=\int_0^x 1/(1+|\xi|^p) \ d\xi$ and
\[
h_\alpha(x) \ = \ \pi\left(\frac{a_p(\alpha
    \iota(x))}{2a_p(\alpha/2)}\right) \
\]
where $\alpha\geq 1$, $\iota:\kreis\to (-1/2,1/2]$ is the lift of the identity
map on $\kreis$ and $\pi:(-1/2,1/2]\to\kreis$ is the canonical
projection. Further, assume that $V$ is such that for all but finitely many
$x\in\kreis$ the set $V^{-1}(\{x\})$ consists of exactly two points $\theta_1$
and $\theta_2$ and we have $V'(\theta_1)<0$ and $V'(\theta_2)>0$. Note that for
$p=2$ we have $a_p(x)=\arctan(x)$, and $V(\theta)=\cos(2\pi\theta)$ is a
possible choice of $V$. In this case, $f_\tau$ is the projective action of the
quasiperiodic $\sltr$-cocycle $(\theta,v)\mapsto(\theta+\omega,A(\theta)\cdot
v)$ with $A(\theta)=R_{V(\theta)+\tau}\cdot
\twomatrix{\alpha^{1/2}}{0}{0}{\alpha^{-1/2}}$, where $R_\vartheta$ is the
rotation matrix with angle $\vartheta$. Yet, for other values of $p$ no such
cocycle representation is available.

If $\omega$ is Diophantine and $\alpha$ is chosen sufficiently large, then the
parameter family $f_\tau(\theta,x)=(\theta+\omega,h_\alpha(x)+\tau+V(\theta)$
belongs to the set $\cU$ in Theorem~\ref{t.modelocking}, which will be
explicitely characterised in Section~\ref{Quantitative} below. The details are
easy to check, see \cite[Section 3.8]{jaeger:2009a} (compare also
\cite[Corollary 1.2]{Jaeger2012SNADiophantine}). Thus, in this case
$(f_\tau)_{\tau\in\kreis}$ satisfies the assertions of
Theorem~\ref{t.modelocking}.
\item The presented methods and results can also be applied to the
  quasiperiodically forced version of the Arnold circle map given in
  (\ref{eq:1}), with a suitable forcing function like
  $V_\beta(\theta)=\arctan(\beta\sin(2\pi\theta))\pi$ with large
  $\beta>0$. Strictly speaking, some modifications are needed to
  include this case. This results from the fact that the Arnold circle
  map does not show arbitrarily strong expansion, which we work with
  in our proofs below. However, this can be made up for by requiring a
  special shape of the forcing function, translating into a largeness
  assumption on $\beta$ above.

  The required modifications have been carried out in detail in
  \cite{jaeger:2009a,Jaeger2012SNADiophantine}, and it is on the level of an
  advanced exercise to implement them as well for our setting.  As a result, one
  obtains that in the parameter family
  $f_\tau(\theta,x)=(\theta+\omega,h_{\alpha,\tau}(x)+V_\beta(\theta))$ the
  boundary of $\cM(\hat f)$ has positive measure, provided $\omega$ is
  Diophantine, $\alpha\in(0,1)$ and $\beta>0$ is sufficiently large. We note
  that due to the different geometry, the measure of $\partial \cM(\hat f)$
  cannot be ensured to be close to $1$ in this case (compare
  \cite{jaeger:2009a}).
\item The most prominent example of a quasiperiodically forced system
  is probably the so-called Harper map, which is induced
  real-projective action of the quasiperiodic Schr\"odinger cocycle
  associated to the almost-Mathieu operator. It takes the form
  \[
  f_\tau(\theta,x) \ = \
  \left(\theta+\omega,\frac{1}{\pi}\arctan\left(\frac{-1}{\tan(\pi
        x)-\tau+\lambda\cos(2\pi\theta)}\right) \bmod 1 \right) \ .
  \]
  Again, a slight modification of our methods would allow to treat
  this example for large coupling parameters $\lambda>0$. However, as
  mentioned above, stronger results are available for this special
  case \cite{avila/jitomirskaya:2005,WangZhang2014CantorSpectrum}, so
  we refrain from providing any details.  \listend
\end{examples}
\medskip

\noindent{\bf Acknowledgements.} JW has been supported by a research
fellowship of the Alexander-Humboldt-Foundation. TJ has received
support of the German Research Council (Emmy-Noether grant Ja 1721/2-1
and Heisenberg-Fellowship Oe 538/7-1). The first ideas for this
project have been developed during the International conference on
Hamiltonian dynamcs, Nanjing 2011, and the authors would like to thank
the organisers for creating the opportunity and Hakan Eliasson for a
helpful discussion.

\section{Review of the multiscale analysis and outline of the
  proof} \label{Outline}

\subsection{Multiscale analysis of qpf circle maps.}

The aim of this section is to give an outline of the proof of
Theorem~\ref{t.modelocking}, in order to provide some guidance through
the technically rather involved later sections and to render these
more accessible. To that end, we first need to give a brief
description of the multiscale analysis established in
\cite{jaeger:2009a,Jaeger2012SNADiophantine}, on which our
construction builds. As mentioned, the main result in
\cite{Jaeger2012SNADiophantine} is the existence of a $\cC^1$-open set
$\cU\ssq\cP_\omega$ such that for all $\hat f\in\cU$ there is a set
$\Lambda^{\hat f}\ssq\kreis$ of measure $\geq 1-\delta$ which
satisfies assertion (i) in Theorem~\ref{t.modelocking}, that is, for
each $\tau\in\Lambda^{\hat f}$ the map $f_\tau$ has an SNA and minimal
dynamics. The proof hinges on the crucial fact that the existence of
an SNA follows from that of a {\em sink-source orbit}, that is, an
orbit that has positive Lyapunov exponent both forwards and backwards
in time \cite{jaeger:2009a}. In the context of Schr\"odinger
operators, this corresponds to the existence of an exponentially
decaying eigenfunction
\cite{haro/puig:2006,bjerkloev:2005a,jaeger:2006a}.

We will work with essentially the same sets $\cU$ and $\Lambda^{\hat
  f}$ as in \cite{Jaeger2012SNADiophantine}, and therefore need to
understand the geometric properties of the parameter families in
$\cU$ and the mechanism which leads to the existence of sink-source
orbits for parameters in $\Lambda^{\hat f}$.  A complete list of the
$\cC^1$-estimates characterising $\cU$ and precise versions of the
following statements will be given in the next section. Here, we try
to sketch an overall picture in order to give some intuition.
Roughly spoken, the geometry of parameter families in $\cU$ can be
described as follows.  We supress the dependence on the parameter
$\tau$, since the respective properties are supposed to be satisfied
uniformly over the parameter range.
\begin{itemize}
\item[(a)] There exists a small interval $E\ssq\kreis$ and a large
  interval $C\ssq\kreis$ such that for all $\theta\in\kreis$ the fibre maps
  $f_\theta$ are expanding on $E$ and contracting on $C$. This gives
  rise to an {\em expanding region} $\kreis\times E$ and a {\em
    contracting region} $\kreis\times C$.
\item[(b)] Both these regions are `almost invariant', in the following
  sense. There is a {\em critical region} $\cI_0\ssq \kreis$,
  consisting of two small intervals $I^1_0$ and $I^2_0$, such that for
  all $\theta\notin \cI_0$ the fibre map $f_\theta$ sends $\kreis\smin
  E$ into $C$. In other words, this means that $\pi_1\circ\left(
  f(\kreis\times E^c)\cap (\kreis \times C^c)\right) \ssq \cI_0+\omega$.
  Equivalently, the inverse $(f_\theta)^{-1}$ maps $\kreis\smin C$
  into $E$.
\item[(c)] If the parameter $\tau$ is varied, the two components $I^1_0$
  and $I^2_0$ move with respect to each other with some minimal speed.
\item[(d)] The images of $I_0^1\times C$ and $I_0^2\times C$ under $f$
  intersect $\kreis\times E$ {\em `transversely'} and qualitatively
  look as in Figure~\ref{Fig.SNA}(a).
\item[(e)] All fibre maps $f_\theta=f_{\tau,\theta}$ are {\em monotone} in the parameter
  $\tau$, that is, $\partial_\tau f_{\tau,\theta}(x)>0$ for all
  $(\tau,\theta,x)\in\T^3$. Here $\partial_\xi$ denotes the derivative with
  respect to a variable $\xi$.
\end{itemize}

Using these assumptions, the multiscale analysis in
\cite{jaeger:2009a,Jaeger2012SNADiophantine} concentrates on a
sequence of critical sets ${\cal C}_0 ,{\cal C}_1, {\cal C}_2,
\ldots$, which are defined recursively with respect to a
super-exponentially increasing sequence of integers $\nofolge{M_n}$
(time scales) in the following way.
\begin{eqnarray}
  {\cal A}_n & := &  \{(\theta,x) \mid \theta \in {\cal I}_n-(M_n-1)\omega,\ x
  \in C\} \ , \label{e.An} \\{\cal B}_n & := &  \{(\theta,x) \mid \theta \in {\cal
    I}_n+(M_n+1)\omega,\ x \in E\} \ ,\label{e.Bn}\\ {\cal C}_n & := &
  f_\tau^{M_n-1}({\cal A}_n) \cap f_\tau^{-M_n-1}({\cal B}_n),\label{e.Cn}  \\
  {\cal I}_{n+1} &  := &  \mathrm{int}(\pi_1({\cal C}_n)) \ .\label{e.In}
\end{eqnarray}
It is important to note that all the above sets and also the time
scales $M_n$ implicitely depend on the parameter $\tau$. We will
sometimes make this dependence explicit by writing
$\cI_n(\tau),\cC_n(\tau)$, ect. The projection $\cI_{n}$ of
$\cC_{n-1}$ will be called the {\em $n$-th critical region} of
$f_\tau$. In general, not much can be said about the critical sets and
critical regions.  However, it turns out that for a large set of
parameters $\Lambda^{\hat f}_n$ it is possible to obtain a very
precise control up to stage $n$ of the construction.  These sets
$\Lambda_n^{\hat f}$ are defined by the validity of the following
slow-recurrence conditions for the critical regions of $f_\tau$.
\begin{equation}\label{e.Xn}
\tag*{$(\mathcal{X})_n$} d(\mathcal{I}_j,\mathcal{X}_j)\ >\
3\eps_j\qquad \forall j=0,\ldots,n, \qquad\textrm{ and }
\end{equation}
\begin{equation}\label{e.Yn}
  \tag*{$(\mathcal{Y})_n$}
  d((\mathcal{I}_j-(M_j-1)\omega)\cup(\mathcal{I}_j+(M_j+1)\omega),\mathcal{Y}_{j-1})\ > \
  0 \quad  \forall j=1,\ldots,n \ ,
\end{equation}
where
\begin{eqnarray}
\mathcal{X}_n & = & \bigcup_{l=1}^{2K_nM_n}(\mathcal{I}_n+l\omega)\ , \\
\mathcal{Y}_n & =
&\bigcup_{j=0}^n\bigcup_{l=-M_j}^{M_j+2}(\mathcal{I}_j+l\omega) \ ,
\end{eqnarray}
with $\nfolge{K_n}$ an exponentially increasing sequence of integers
and $\nfolge{\eps_n}$ a sequence of positive numbers decreasing to
zero super-exponentially. We have
\begin{equation}\label{e.Lambda_n}
\Lambda^{\hat f}_n \  = \ \left\{\tau\in\kreis\mid
\textrm{conditions  \ref{e.Xn} and  \ref{e.Yn} are satisfied for the
map } f_\tau \right\} \ .
\end{equation}

The conditions \ref{e.Xn} and \ref{e.Yn} play a central role in the
construction (as in previous work in
\cite{bjerkloev:2005a,bjerkloev:2007a,jaeger:2009a,Jaeger2012SNADiophantine}).
The reason is that if \ref{e.Xn} and \ref{e.Yn} hold, then a number
of rather straightforward and mainly combinatorial arguments allow
to establish the following facts concerning the geometry of the
first $n+1$ critical sets and regions.
\begin{itemize}
\item[(i)] The critical sets are nested and non-empty, that is,
  $\cC_1\supseteq \ld \supseteq \cC_{n+1}\neq\emptyset$.
\item[(ii)] For all $j=1\ld n+1$ the critical region $\cI_j$ consists
  of exactly two intervals $I^1_j$ and $I^2_j$, each of which has
  length $\leq \eps_j$.
\item[(iii)] If we denote by
  $\cA_j^\iota=(I^\iota_j-(M_j-1)\omega)\times C$ and
  $\cB_j^\iota=(I^\iota_j+(M_j+1)\omega)\times E$ with $\iota=1,2$ the two
  connected components of $\cA_j$ and $\cB_j$, then the intersections
  $f^{M_j}(\cA_j^\iota)\cap f^{-M_j}(\cB_j^\iota)$ are `transversal'
  and qualitatively look as in Figure~\ref{Fig.SNA}(a), but the size of the involved
  sets decreases super-exponentially.
\item[(iv)] If the parameter $\tau$ is varied, then the two components
  $I^1_j$ and $I^2_j$ move relative to each other with a certain minimal
  speed.
\item[(v)] For any starting point $(\theta,x)\in \closure(\cC_n)$, the first
  $M_n$ forwards iterates remain in the expanding region `most of the
  time', whereas the first $M_n$ backwards iterates mostly remain in
  the contracting region.
\end{itemize}
Based on the above statements, the existence of sink-source orbits
can be established rather easily. Since all the $\Lambda_n^{\hat f}$
are large, the same is true for the intersection $\Lambda^{\hat
  f}=\bigcap_{n\in\N} \Lambda^{\hat f}_n$. Given $\tau\in\Lambda^{\hat
  f}$, the intersection $\cC=\bigcap_{n\in\N} \closure(\cC_n)$ is
non-empty due to (i), and it follows from (v) that any orbit
starting in $\cC$ is a sink-source orbit.

The crucial issue in the above statements is the qualitative
description of the geometry of the intersections
$f^{M_j}(\cA_j^\iota)\cap f^{-M_j}(\cB_j^\iota)$ in (iii). For the
first stage of the construction, this is quite plausible from the
above assumptions (a)--(e). If $M_0$ is chosen such that
$\cI_0+k\omega\cap \cI_0=\emptyset$ for all $k=-M_0+1\ld -1$, then
due to (b) the iterates $f^k(\cA_0^\iota)$ of $\cA_0^\iota$ all
remain in the contracting region $\kreis\times C$. Consequently, the
image $f^{M_0-1}(\cA_0^\iota)$ is a `strip' contained in
$I^\iota_0\times C$, which is very thin and more or less horizontal
due to the contraction insides $\kreis\times C$. A more precise
version of condition (d) then ensures that the next image
$f^{M_0}(\cA_0^\iota)$ is a thin strip with more or less uniform
slope, slanted either upwards or downwards. A similar argument
yields that the preimage $f^{-M_0}(\cB_0^\iota)$ is a very thin
horizontal strip, and the two sets intersect as depicted in
Figure~\ref{Fig.SNA}(a). The main issue in
\cite{jaeger:2009a,Jaeger2012SNADiophantine} is to ensure that for
most parameters, this qualitative picture remains valid on all
levels of the construction. This is achieved by showing that the
iterates $f^k(\cA^\iota_n)$ with $k=1\ld M_n-1$ remain in
$\kreis\times C$ at least most of the times, even if they may visit
the critical parts $\cI_0\times\kreis$ of the phase space and thus
leave the contracting region for short periods. We refer to
\cite[Section 4.1]{Jaeger2012SNADiophantine} for a more detailed
description of these ideas.

%Just to give a better understanding, we also want to mention the
%fact that all $\Lambda^{\hat f}_n$ are large follows inductively
%from statements (ii) and (iv). A brief explanation can be given as
%follows. If $\tau\in\Lambda_n^{\hat f}$, then according to (ii) the
%two components $I^1_{n+1}$ and $I^2_{n+1}$ of the $n+1$-th critical
%regions are small.  For this reason, the Diophantine condition
%implies that each of them is disjoint from its iterates for a long
%time, which can be shown to be greater than $2K_{n+1}M_{n+1}$. It is
%possible that an iterate of $I^1_{n+1}$ intersects $I^2_{n+1}$ after
%a short time. However, since the two components move relative to
%each other by (iv) and are at the same time very small, this only
%happens for a very small set of exceptional ({\em `bad'}) parameters
%$\tau\in\Lambda_n^{\hat f}$. For all other parameters in
%$\Lambda_n^{\hat f}$, condition $(\cX)_{n+1}$ will be satisfied.  At
%the same time, a combinatorial argument shows that for all `good'
%$\tau\in\Lambda_n^{\hat f}$, there exists a suitable integer
%$M_{n+1}$ such that $(\cY)_{n+1}$ holds as well.  Hence, excluding
%the small set of `bad' parameters yields a new set $\Lambda^{\hat
%f}_{n+1}$, still large, on which $(\cX)_{n+1}$ and $(\cY)_{n+1}$ are
%satisfied.

\subsection{Outline of the proof.} \label{ProofOutline}

The proof of Theorem~\ref{t.modelocking} directly builds upon this
multiscale analysis. However, the task is now quite different. Since
the existence of the set $\Lambda^{\hat f}$ of `good parameters'
with measure $\geq 1-\delta$ has already been established in
\cite{Jaeger2012SNADiophantine}, we may assume a priori that this
set exists, satisfies assertion (i) of Theorem~\ref{t.modelocking}
and moreover the recurrence conditions \ref{e.Xn} and \ref{e.Yn}
hold for all $\tau_0\in\Lambda^{\hat f}$. The aim is then to prove
that an arbitrarily small perturbation of $\tau_0$ allows to find a
nearby parameter $\tau$ for which $f_\tau$ displays mode-locking.
The crucial observation in this context is the fact that if
$\cC_n=\emptyset$ for some $n\in\N$, then $f_\tau$ has an attracting
continuous invariant curve and consequently its rotation number is
mode-locked. This is stated in
Proposition~\ref{p.modelocking_criterion} below.  Hence, what we
need to show is that an arbitrarily small shift of a parameter
$\tau_0\in\Lambda^{\hat f}$ allows to render the intersection
$\cC_n$ empty for some $n\in\N$, while at the same time keeping the
slow-recurrence conditions $(\cX)_{n-1}$ and \ref{e.Yn}.

  \begin{figure}[t]

    \begin{center}
       \includegraphics[width=0.9\linewidth]{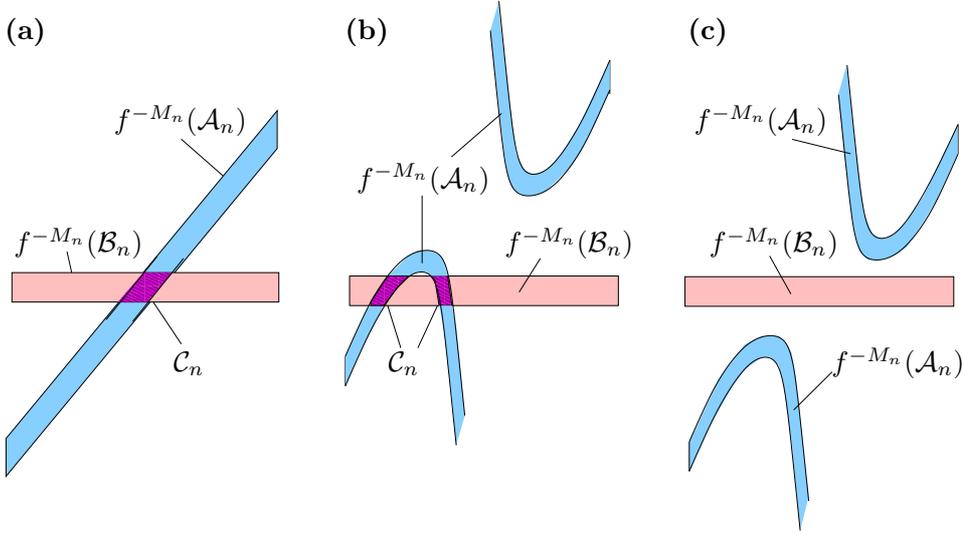}
     \end{center}
     \caption{\small The geometry of the critical sets $\cC_{n+1}$ in
       the multiscale analysis: (a) in the standard setting and (b)
       and (c) in the case of fast returns. Note that the two `hooks'
       of $f^{M_n}(\cA^1_n)$ are connected to each other as the set
       wraps once around the torus, but this is not depicted.
       \label{Fig.SNA}}
\end{figure}

In order to achieve this goal, we first perturb the parameter $\tau_0$
in such a way that the slow recurrence conditions $(\cX)_{n-1}$ and
$(\cY)_n$ still hold, but there is a {\em fast return} of $\cI_n$ to
itself.  More precisely, the control on the parameter-dependence of
the critical sets obtained in \cite{Jaeger2012SNADiophantine} is used
to shift $\tau_0$ in such a way that $I^1_n+k\omega\cap
I^2_n\neq\emptyset$ for some relatively small $k\geq 0$. For the
second component of $\cC_n$, on which we concentrate now, this implies
that when $\cA_n^2=(I^2_n-(M_n-1)\omega)\times C$ is iterated
forwards, it passes through the critical region $I^1_n\times\kreis$
before it approaches $I^2_n\times\kreis$ to intersect with the
$M_n$-th preimage of $\cB^2_n=(I^2_n+(M_n+1)\omega)$.

This results in a drastic change in the geometry of the resulting
image $f^{M_n}(\cA^2_n)$, and qualitatively the situation then looks
as in Figure~\ref{Fig.SNA}(b). The set $f^{M_n}(\cA^2_n)$ now has two
`hooks', and the vertical extension of the gap between these hooks is
greater than that of $f^{-M_n}(\cB^2_n)$. A more detailed explanation
for this behaviour is difficult to give at this stage, but will be
provided in Section~\ref{GeometricEstimates} (see
Figures~\ref{f.stragegy} and \ref{f.stragegy2}). Moreover, when the
parameter $\tau$ is varied further, the hooks move both horizontally
and, more importantly, also vertically with $\tau$, whereas the set
$f^{-M_n}(\cB^2_n)$ remains more or less stable. As a consequence, for
some parameters $\tau$ the involved sets have to reach a position
where their intersection remains empty, as shown in
Figure~\ref{Fig.SNA}(c).  A similar picture holds simultaneously for
the first component $f^{M_n}(\cA_n^1)\cap f^{-M_n}(\cB_n^1)$, thus
showing that $\cC_{n}=\emptyset$ for some $\tau$ close to $\tau_0$.
As mentioned above, this will allow to complete the proof of
Theorem~\ref{t.modelocking} via
Proposition~\ref{p.modelocking_criterion}.

For the rigorous implementation of this proof, the major task will
be to describe the geometry and parameter-dependence of the set
$f^{M_n}(\cA_n)$. Instead of trying to define precisely what it
means to be `hook-shaped', we show that $f^{M_n}(\cA_n^\iota)\cap
(\kreis\times E)$ is contained in the disjoint union of certain
polygons $\cL$ and $\cR$. We provide quantitative estimates on the
shape and position of these sets which imply that the preimage
$f^{-M_n}(\cB_n^\iota)$, which is a thin and more or less horizontal
strip contained in $\kreis\times E$, cannot intersect both of them
at the same time. Moreover, we show that by moving $\tau$ it is
possible to force an intersection with $\cL$ at one parameter near
$\tau_0$ and with $\cR$ at another one, which implies that for some
intermediate parameter there is no intersection with either of them.

The precise quantitative version of our main result, including the
explicit characterisation of the set $\cU$ in terms of
$\cC^1$-estimates, is given in the next section.
Section~\ref{Preliminaries} collects some further information and
statements on the multiscale analysis scheme from
\cite{jaeger:2009a,Jaeger2012SNADiophantine,fuhrmann2013NonsmoothSaddleNodesI}.
In Section~\ref{GeometricEstimates}, we state the properties and
quantitative estimates for the polygons $\cL$ and $\cR$ containing
$f^{M_n}(\cA_n^\iota)$ and on $f^{-M_n}(\cB_n^\iota)$ and show how
these statements imply the main result. The proofs of these estimates
are then given in Section~\ref{MainProof}.

\section{Quantitative version of the main result} \label{Quantitative}

We first state the precise conditions on the geometry of the
considered parameters families, which were only circumscribed in the
previous section.\medskip

\noindent \textit{ I. Diophantine condition.} We say
$\omega\in\kreis$ satisfies the Diophantine condition with constants
$\gamma$ and $\nu$ if
\begin{equation}
  \label{eq:Diophantine} %\tag{${\cal A}0$}
  d(n\omega,0) \ > \ \gamma\cdot |n|^{-\nu} \quad \forall n\in\Z\setminus\{0\} \ .
\end{equation}
By $\cD(\gamma,\nu)$, we denote the set of $\omega\in\kreis$ which
satisfy \eqref{eq:Diophantine}.\smallskip

\noindent \textit{ II. Critical regions.} Let $E=[e^-,e^+]$ and
$C=[c^-,c^+]$ be two non-empty, compact and disjoint subintervals of
$\T^1$. We assume that for all $\tau\in\kreis$ there exists a set
${\cal I}_0(\tau) \subseteq \T^1$ which is the union of two disjoint
open intervals $I_0^1(\tau), I_0^2(\tau)$ and satisfies
\begin{equation}  \label{eq:Cinvariance} \tag{${\cal A}1$}
  \flth(\mbox{cl}(\T^1 \setminus E)) \ \subseteq \ \mbox{int}(C) \ \ \ \ \ \forall
  \theta \notin {\cal I}_0(\tau) \ .
\end{equation}
Note that this implies
\begin{equation} \label{eq:Einvariance} \tag{${\cal A}1'$}
  \flth^{-1}(\mbox{cl}(\T^1 \setminus C)) \ \subseteq \ \mbox{int}(E) \ \ \ \ \ \forall
  \theta \notin {\cal I}_0(\tau)+\omega \ .
\end{equation} %\smallskip

\noindent \textit{III. Bounds on the derivatives.} Concerning the
derivatives of the fibre maps $\flth$, we assume that for some
$\alpha>1$ and $p\geq 2$ we have
\begin{equation} \tag{${\cal A}2$}
  \label{eq:bounds1}  \alpha^{-p} \ < \ \partial_x\flthx \ < \ \alpha^p \hspace{2eM} \quad
  \forall (\theta,x) \in \torus \ ;
\end{equation}
\begin{equation} \label{eq:bounds2} \hspace{3eM}  \partial_x\flthx \ > \
\alpha^{2/p}
  \hspace{4.1eM} \quad \forall (\theta,x) \in \T^1 \times  \tag{${\cal A}3$}
  E \ ; \end{equation}
\begin{equation} \label{eq:bounds3} \hspace{3eM} \partial_x\flthx \ < \
  \alpha^{-2/p}\hspace{4.1eM} \quad \forall (\theta,x) \in \T^1 \times C \ .
  \tag{${\cal A}4$}
 \end{equation}
 Further, we fix $S>0$ such that
\begin{equation} \label{eq:bounddth}  \tag{${\cal A}5$}
   |\partial_\theta \flthx| \ < \ S \ \ \ \ \ \forall (\theta,x) \in
  \torus \ .
\end{equation}

\noindent \textit{IV. Transversal Intersections.} The following
condition ensures that the image of $I_0^\iota(\tau) \times C$
crosses $(I^\iota_0(\tau)+\omega) \times E$ exactly once and not
several times.
\begin{equation} \label{eq:crossing}  \tag{${\cal A}6$}
  \begin{array}{l} \exists!\theta_\iota^1 \in I_0^\iota(\tau) \textrm{ with  }
    f_{\tau,\theta_\iota^1}(c^+) = e^- \textrm{ \ and } \\ \exists! \theta_\iota^2 \in
    I_0^\iota(\tau) \textrm{ with } f_{\tau,\theta_\iota^2}(c^-) = e^+ \ . \end{array}
\end{equation}
The slope of $f(I^\iota_0(\tau)\times C)$ is controlled by
\begin{equation}
\left\{ \begin{array}{ll} &\partial_\theta \flthx \ < \ -s \ \ \ \ \
\forall (\theta,x) \in I_0^1(\tau)
  \times \T^1\\
&\partial_\theta \flthx \ > \quad \, s \ \ \ \ \ \forall (\theta,x)
\in I_0^2(\tau)
  \times \T^1
  \end{array}\right.\ \ , \label{eq:s} \tag{${\cal A}7$}
\end{equation}
where $s$ is a constant with $0 < s < S$. Note that thus
$f(I^\iota_0(\tau)\times C)$ crosses $(I^\iota_0(\tau)+\omega)\times E$
`downwards' if $\iota=1$ and `upwards', as in
Figure~\ref{Fig.SNA}(a), if $\iota=2$.  \smallskip

\noindent \textit{ V. Dependence on $\tau$.} First, we assume that
$\flthx$ is monotonically increasing with respect to $\tau$,  and we
 fix upper and lower bounds $L,\ell>0$ on $\partial_\tau \flthx$, that is,
\begin{equation} \label{eq:bounddlambda} \tag{${\cal A}8$}
\ell< \partial_\tau \flthx \ < \ L\ \ \ \ \ \forall (\theta,x) \in
\torus \ .
\end{equation}
Writing $I^\iota_0(\tau) = (a^\iota_0(\tau),b^\iota_0(\tau))$ for
$\iota=1,2$, we further assume that the functions
$a^\iota_0,b^\iota_0$ are continuously differentiable with respect to
$\tau$. Then we assume
\begin{equation}
  \label{e.I_0-derivative} \tag{${\cal A}9$}
  \inf_{\tau\in\kreis}\left(\min\{\partial_\tau a^1_0(\tau),\partial_\tau b^1_0(\tau)\} -
  \max\{\partial_\tau a^2_0(\tau),\partial_\tau b^2_0(\tau)\}\right) \ > \ \ell/S \ . \\
\end{equation}
This ensures that the two components of $\cI_0$ `move relative to each
other' with minimal speed $\ell/S$. Finally, by increasing $L$ further
if necessary, we can assume that
\begin{equation}\label{eq:boundinterval}\tag{$\mathcal{A}10$}
  \sup_{\tau\in\kreis}\max\{|\partial_\tau a^1_0(\tau)|,|\partial_\tau b^1_0(\tau)|,
  |\partial_\tau a^2_0(\tau)|,|\partial_\tau b^2_0(\tau)| \} \ < \ 2L/s \ .
\end{equation}

Given $A\ssq\kreis$, we denote by $|A|$ the Lebesgue measure of $A$.
In particular, if $A$ is an interval, then $|A|$ is simply its
length. The quantitative version of Theorem~\ref{t.modelocking},
with an explicit characterisation of the set $\cU$, now reads as
follows.
\begin{thm} \label{t.modelocking_quantitative} Let $\omega\in\cD(\gamma,\nu)$,
  $\delta>0$ and suppose $\hat f\in \cP_\omega$ satisfies the
  conditions \eqref{eq:Cinvariance}--~\eqref{eq:boundinterval} above.
  Let
  $\eps_0=\sup_{\tau\in\kreis}\max\left\{|I^1_0(\tau)|,|I^2_0(\tau)|\right\}$.

  Then there exist contants
  $\alpha_*=\alpha_*(\delta,\gamma,\nu,p,S,s,\ell,L)>0$ and
  $\eps_*=\eps_*(\delta,\gamma,\nu,p,S,s,\ell,L)$ such that if
  $\alpha>\alpha_*$ and $\eps_0<\eps_*$, then there exists a set
  $\Lambda^{\hat f}\ssq\kreis$ of measure at least $1-\delta$ with the
  property that \romanlist
\item for all $\tau\in\Lambda^{\hat f}$, the map $f_\tau$ has a
  (unique) SNA and the dynamics of $f_\tau$ are minimal;
\item $\Lambda^{\hat f}\ssq\partial\cM(\hat f)$.  \listend
\end{thm}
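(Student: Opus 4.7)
The plan is to take the set $\Lambda^{\hat f}$ whose existence together with assertion (i) is guaranteed by the construction of \cite{Jaeger2012SNADiophantine}, and to prove the inclusion $\Lambda^{\hat f}\ssq\partial\cM(\hat f)$. By Proposition~\ref{p.modelocking_criterion} stated in Section~\ref{ProofOutline}, if $\cC_n(\tau)=\emptyset$ at some stage $n\in\N$, then $f_\tau$ admits an attracting continuous invariant curve and is therefore mode-locked, so $\tau\in\cM(\hat f)$. On the other hand, for $\tau_0\in\Lambda^{\hat f}$ the multiscale construction that produces the SNA requires $\cC_n(\tau_0)\ne\emptyset$ for every $n$, so $\tau_0\notin\cM(\hat f)$. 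Hence it is enough to show that every neighbourhood of $\tau_0$ contains a parameter $\tau$ with $\cC_n(\tau)=\emptyset$ for some $n$.

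Given $\tau_0\in\Lambda^{\hat f}$, the conditions \ref{e.Xn} and \ref{e.Yn} hold for all $n$ with the corresponding critical regions $\cI_n(\tau_0)=I^1_n(\tau_0)\cup I^2_n(\tau_0)$. The first step is to use~\eqref{e.I_0-derivative} together with the inductive control of the $\tau$-dependence of $\cI_n$ from \cite{Jaeger2012SNADiophantine} to choose a large $n$ and a small shift $\tau_0\mapsto\tau_1$, of order comparable to $\eps_n$, producing a \emph{fast return}: an integer $k$ much smaller than $M_n$ for which $(I^1_n(\tau_1)+k\omega)\cap I^2_n(\tau_1)\ne\emptyset$, while the weaker conditions $(\cX)_{n-1}$ and \ref{e.Yn} remain valid. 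This is possible because \ref{e.Xn} excludes precisely such small returns at level $n$ for $\tau_0$ itself; pushing $\tau$ just far enough to violate \ref{e.Xn} once at level $n$ is the mechanism. The Diophantine condition together with the super-exponential decay of $\eps_n$ ensures that such a $k$ can be arranged in the forbidden window.

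The geometric heart of the argument is then to track the iterates of $\cA^\iota_n(\tau)$ for $\tau$ near $\tau_1$. Because of the fast return, the almost-horizontal contracting strip $\cA^\iota_n$ passes once through the expanding window around $I^1_n\times\kreis$ at an intermediate time before reaching its target, creating the two-hook shape of Figure~\ref{Fig.SNA}(b). I would capture this by constructing two disjoint polygons $\cL=\cL(\tau)$ and $\cR=\cR(\tau)$ that contain $f^{M_n}(\cA^\iota_n)\cap(\kreis\times E)$, with explicit bounds on their vertical extent, on the vertical gap between them, and on the velocities $\partial_\tau\cL,\partial_\tau\cR$, which inherit the expansion along the forward orbit. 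In parallel, $f^{-M_n}(\cB^\iota_n)$ remains a thin nearly-horizontal strip whose vertical position depends only weakly on $\tau$, because its $\tau$-derivative is damped by the contraction along its backward orbit through $\kreis\times C$. Combining these yields: (a) the preimage strip cannot meet $\cL$ and $\cR$ simultaneously; and (b) as $\tau$ varies over a small interval around $\tau_1$, the hooks sweep vertically past the strip while the strip barely moves. An intermediate-value argument then produces parameters $\tau'<\tau''<\tau'''$ near $\tau_1$ at which $f^{-M_n}(\cB^\iota_n)$ meets only $\cL$, neither polygon, and only $\cR$, respectively; at the middle parameter $\tau''$ one has $\cC_n(\tau'')\cap(\kreis\times E)=\emptyset$, and performing this simultaneously for $\iota=1,2$ yields $\cC_n(\tau'')=\emptyset$, closing the argument via Proposition~\ref{p.modelocking_criterion}.

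The main obstacle is the quantitative description of the hooks. The multiscale inductive hypotheses from \cite{jaeger:2009a,Jaeger2012SNADiophantine} are tailored to parameters whose orbits spend most of the time in $\kreis\times C$; the fast-return configuration is deliberately set up to violate this at level $n$. One has to re-propagate the strip $\cA^\iota_n$ along an orbit that intentionally visits a lower-level critical region once, show that the expansion inside $I^1_n\times\kreis$ produces a controlled vertical spike rather than destroying the geometry, and obtain bounds sharp enough to rule out simultaneous intersection with a single preimage strip and to guarantee the sign changes needed for the intermediate-value step. This is where the bulk of the technical work, carried out in the later sections, is expected to lie.
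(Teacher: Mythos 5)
Your plan follows essentially the same route as the paper: perturb $\tau_0\in\Lambda^{\hat f}$ to create a fast return $(I^1_n+k\omega)\cap I^2_n\neq\emptyset$ with $k\ll M_n$ while retaining $(\cX)_{n-1}$ and a slightly relaxed $(\cY)$-condition; track the resulting two hooks of $f^{M_n}(\cA_n^\iota)$ by enclosing polygons $\cL,\cR$; show the thin backward strip $f^{-M_n}(\cB_n^\iota)$ is too flat to meet $\cL$ and $\cR$ simultaneously; and then use continuity over the parameter interval to find a $\tau$ where it meets neither, hence $\cC_n=\emptyset$, hence mode-locking via Proposition~\ref{p.modelocking_criterion}. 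This is precisely the strategy of Sections~\ref{GeometricEstimates} and~\ref{MainProof}, and your mechanism for why small shifts create fast returns (using the leeway between $(\cX')_n$/$(\cY')_n$ and $(\cX)_n$/$(\cY)_n$, the velocity bounds on $\partial_\tau I^\iota_n$, and the Diophantine condition) is the content of Lemma~\ref{l.perturbation_closereturn}.

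The one genuine gap is the phrase ``performing this simultaneously for $\iota=1,2$''. Running your intermediate-value argument separately for the two components of $\cC_n$ produces two possibly different ``middle'' parameters: one at which the $\iota=1$ intersection is empty, another at which the $\iota=2$ intersection is empty, and nothing forces these to coincide. The theorem, however, needs $\cC_n(\tau)=\emptyset$ (both components at once) at a \emph{single} $\tau$. The paper resolves this by exploiting the fast return itself: since $I^1_n+k\omega$ is within $\sim\eps_n$ of $I^2_n$, after a shift by $k\omega$ the two components essentially merge, and one can replace the four sets $\cA_n^1,\cA_n^2,\cB_n^1,\cB_n^2$ by a single pair $\cA'=(J-(M_n-1)\omega)\times C$, $\cB'=(J+(M_n-k+1)\omega)\times E$ with $J=\closure\bigl(B_{4\eps_n}(I^1_n+k\omega)\cup B_{4\eps_n}(I^2_n)\bigr)$. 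Lemma~\ref{l.inclusions} shows that \emph{both} components of $\cC_n$ are contained in (suitable iterates of) $\cA''\cap\cB''=f^{M_n}(\cA')\cap f^{-(M_n-k)}(\cB')$, so the intermediate-value argument on this single intersection kills both simultaneously. Without this (or an equivalent device), your outline does not close. A secondary, minor point: your reason that $\tau_0\notin\cM(\hat f)$ invokes a converse to Proposition~\ref{p.modelocking_criterion} which is not established; the correct and simpler reason is that assertion (i) gives minimality, which rules out an attracting continuous invariant curve and hence mode-locking.
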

Note that since the above conditions
\eqref{eq:Cinvariance}--\eqref{eq:boundinterval} are all
$\cC^1$-open, this directly implies Theorem~\ref{t.modelocking}.

\section{Preliminaries on the multiscale analysis}\label{Preliminaries}

As mentioned before, the existence of SNA and the minimality of the dynamics in
Theorems~\ref{t.modelocking} and \ref{t.modelocking_quantitative} are already
contained in \cite{jaeger:2009a,Jaeger2012SNADiophantine}.  However, in order to
build on these results, we need restate them in a precise way and provide some
additional quantitative information. In particular, this concerns the slow
recurrence conditions \ref{e.Xn} and \ref{e.Yn}, which are replaced by the
following stronger versions.
\begin{equation}\label{e.Xn'}
    \tag*{$(\mathcal{X'})_n$}
    d(\mathcal{I}_j,\mathcal{X}_j)\ >\ 9\varepsilon_j\qquad \forall
    j=0,\ldots,n,
  \end{equation}
  \begin{equation}\label{e.Yn'}
    \tag*{$(\mathcal{Y}')_n$}
    d((\mathcal{I}_j-(M_j-1)\omega)\cup(\mathcal{I}_j+(M_j+1)\omega),\mathcal{Y}_{j-1})\ > \
    2\eps_{j-1} \quad  \forall j=1,\ldots,n \ .
  \end{equation}
  With these notions, we can restate \cite[Theorem
  3.1]{Jaeger2012SNADiophantine} as follows. The information on the
  sequences $\jfolge{K_j},\jfolge{M_j}$ and \jfolge{\eps_j} is taken
  from the proof of this theorem.
  \begin{thm}[\cite{Jaeger2012SNADiophantine}]
    \label{t.Main_qualitative} Let $\omega\in\cD(\gamma,\nu)$,
    $\delta>0$ and suppose $\hat f\in \cP_\omega$ satisfies the
    conditions \eqref{eq:Cinvariance}--\eqref{eq:boundinterval} above.
    Let
    $\eps_0=\sup_{\tau\in\kreis}\max\{|I^1_0(\tau)|,|I^2_0(\tau)|\}$.
    Then there exist constants $\alpha_*'$ and $\eps_*'$, both
    depending on the constants $\delta,\gamma,\nu,p,S,s,\ell,L$ above,
    with the property that if $\alpha>\alpha_*'$ and $\eps_0<\eps_*'$,
    then there exists a set $\Lambda^{\hat f}\ssq\kreis$ of measure at
    least $1-\delta$ such that for all $\tau\in\Lambda^{\hat f}$ the
    map $f_\tau$ has an SNA and minimal dynamics.

  Further, for each $\tau\in\Lambda^{\hat f}$ there exist sequences
  $\jfolge{K_j},\jfolge{M_j}$ and \jfolge{\eps_j} such that for all
  $n\in\N$ the critical regions $\cI_n$ defined in
  \eqref{e.An}--~\eqref{e.In} satisfy the slow-recurrence assumptions
  \ref{e.Xn'} and \ref{e.Yn'} and in addition
   \begin{equation}
     \label{eq:10}
     \max\{|I^1_n|,|I^2_n|\} \ \leq \ \eps_n \ .
   \end{equation}
   Moreover, the above sequences can be chosen such that $M_0=3$,
   $K_j=2^{j+t+2}$ for some $t\geq 4$ which satisfies
   \begin{equation}\label{e.t}
     2^{-t} \ \leq \ \log\left(\frac{p^2+2}{p^2+1}\right) \ ,
   \end{equation}
    and for all $j\in\N_0$ we have
  \begin{eqnarray}
    \label{e.M_j}
      M_{j+1} & \in & \left[\alpha^{M_j/2pq},2\alpha^{M_j/pq}\right] \ , \\
      \eps_{j+1} & \in & [2\alpha^{-M_j/p}/s,2\alpha^{-M_j/2p}/s] \ , \label{e.eps_j}
  \end{eqnarray}
  where $q=\max\{8,4\nu\}$.
\end{thm}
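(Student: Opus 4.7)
The qualitative content of the theorem --- the existence of a set $\Lambda^{\hat f}$ of measure $\geq 1-\delta$ on which $f_\tau$ admits an SNA and minimal dynamics --- is exactly Theorem 3.1 of \cite{Jaeger2012SNADiophantine}, so my plan is to \emph{extract} rather than reprove it. What is new here is (a) the replacement of the slow-recurrence conditions \ref{e.Xn} and \ref{e.Yn} by the slightly stronger versions \ref{e.Xn'} and \ref{e.Yn'}, and (b) the explicit bookkeeping of the constants in the recursions \eqref{e.M_j}--\eqref{e.eps_j}, together with the length bound $|I^\iota_n|\leq\eps_n$. Both points follow from a careful reading of the construction in \cite{Jaeger2012SNADiophantine} with only cosmetic adjustments.

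First, I would run the inductive parameter-exclusion scheme from \cite{Jaeger2012SNADiophantine} verbatim, but replace at each stage $n$ the test ``$d(\cI_n,\cX_n)>3\eps_n$'' by ``$>9\eps_n$'' and the test ``$d(\dots,\cY_{n-1})>0$'' by ``$>2\eps_{n-1}$''. Since each connected component of $\cI_n$ has length at most $\eps_n$, and $\cX_n$ is the union of $2K_nM_n$ rigid translates of $\cI_n$ while $\cY_{n-1}$ is the union of at most $\sum_{j<n}(2M_j+3)$ rigid translates of $\cI_j$, the Lebesgue measure of the parameter set removed at stage $n$ by the two tests is bounded by a universal constant times $K_n M_n\eps_n$ and $\sum_{j\le n-1}M_j\,\eps_j$, respectively. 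In view of the super-exponential decay $\eps_{n+1}\in[2\alpha^{-M_n/p}/s,2\alpha^{-M_n/2p}/s]$ and the sub-exponential growth $M_{n+1}\leq 2\alpha^{M_n/pq}$ with $q\geq 8$, both series are summable, and the only effect of strengthening $3\mapsto 9$ and $0\mapsto 2\eps_{n-1}$ is to inflate the excluded measure by a bounded multiplicative constant. This is absorbed by enlarging $\alpha_*'$ and shrinking $\eps_*'$, ensuring that the total excluded measure stays $\leq\delta$.

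Second, I would read off the explicit recursions for $M_j,\eps_j,K_j$ and the length bound on $\cI_n$ directly from the construction. The size estimate $|I^\iota_n|\leq\eps_n$ with $\eps_n\asymp\alpha^{-M_{n-1}/p}$ reflects the contraction of the strip $f^{M_{n-1}}(\cA^\iota_{n-1})\cap f^{-M_{n-1}}(\cB^\iota_{n-1})$ in the $\theta$-direction, which is controlled by the cumulative vertical expansion $\alpha^{2M_{n-1}/p}$ in $E$ divided by the transversality slope $s$; the choice of $M_{n+1}$ super-exponentially in $M_n$ is dictated by the requirement that forward and backward orbits of length $M_{n+1}$ accumulate enough expansion to propagate the geometric picture (i)--(v) of Section~\ref{Outline} to the next stage. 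The dyadic law $K_j=2^{j+t+2}$ and the lower bound \eqref{e.t} on $t$ arise from a telescoping Lyapunov estimate: the factor $\log\frac{p^2+2}{p^2+1}$ is the minimal per-scale expansion surplus one needs to dominate the accumulated losses during critical returns, and $q=\max\{8,4\nu\}$ comes from optimising the Diophantine estimate $d(n\omega,0)>\gamma|n|^{-\nu}$ against the scale-growth.

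The only potential obstacle is to verify that strengthening \ref{e.Xn} and \ref{e.Yn} to \ref{e.Xn'} and \ref{e.Yn'} does not damage any of the geometric arguments (i)--(v) of \cite{Jaeger2012SNADiophantine}. Since those arguments use \emph{lower} bounds on the very distances that we are now enlarging, any strengthening is automatically admissible on the geometric side; the single task is therefore to re-examine the measure bookkeeping in the parameter exclusion of \cite{Jaeger2012SNADiophantine}, confirm that the extra $2\eps_{j-1}$ buffer contributes only an $O(\eps_{j-1})$ increment per stage, and check that the resulting series is still dominated by $\delta$ for $\alpha$ large and $\eps_0$ small. With those checks in place, the theorem follows.
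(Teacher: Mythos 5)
Your overall strategy --- take Theorem~3.1 of \cite{Jaeger2012SNADiophantine} off the shelf and argue that the additional claims in the present statement are harmless tweaks --- is precisely the strategy the paper takes: the theorem is presented as a \emph{restatement}, followed by a short remark justifying the adjustments, and the explicit sequences $M_j,K_j,\eps_j$ are said to be ``taken from the proof of this theorem.'' So the top-level plan is right.

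However, you have misidentified which adjustments actually need justifying. You treat the passage from the weak conditions \ref{e.Xn}, \ref{e.Yn} (bounds $3\eps_j$, $0$) to the strong conditions \ref{e.Xn'}, \ref{e.Yn'} (bounds $9\eps_j$, $2\eps_{j-1}$) as a genuine new strengthening and propose to re-run the entire parameter-exclusion scheme with inflated thresholds, then argue the excluded measure is only changed by a bounded factor. This is both unnecessary and not what the paper does: the parameter exclusion in \cite{Jaeger2012SNADiophantine} already operates with the $9\eps_j$ threshold in \ref{e.Xn'} and with a $2\eps_j$ threshold in its version of \ref{e.Yn'}, so there is nothing to re-derive there. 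The weak conditions \ref{e.Xn}, \ref{e.Yn} appear in the present paper only as hypotheses for the finite-time lemmas, not as the basis for the exclusion scheme. The two changes the paper actually has to address (and does, in the remark immediately after the theorem) are: (i) tightening the lower bound in \ref{e.Yn'} from $2\eps_j$ to $2\eps_{j-1}$ --- a strengthening, not merely a typo fix, justified by pointing to estimate~(4.21) of \cite[Lemma~4.7]{Jaeger2012SNADiophantine} and its use in \cite[Lemma~4.9]{Jaeger2012SNADiophantine}; and (ii) widening the index range in the definition of $\cY_n$ from $l\in\{-M_j+1,\dots,M_j+1\}$ to $l\in\{-M_j,\dots,M_j+2\}$, which is observed to leave the proofs unchanged. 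Your proposal addresses neither of these and instead manufactures a modification that isn't there. Moreover, your ``bounded multiplicative inflation'' argument is only a heuristic: the exclusion at stage $n$ is coupled to the geometric control established at earlier stages, and it is not a priori clear that simply scaling $3\to 9$ factors through the entire induction; the paper avoids this question because the $9\eps_j$ threshold was there to begin with.

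So while the ``cite and justify small modifications'' framework is the right one, the substance --- which modifications exist, and how each is dispatched --- does not match the paper, and the part you propose to re-prove from scratch is neither what needs proving nor something your sketch actually establishes.
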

\begin{rem}
  We note that there are actually two small modifications in
  Theorem~\ref{t.Main_qualitative} in comparison to \cite[Theorem
  3.1]{Jaeger2012SNADiophantine}. 

  The first is just the correction of an unfortunate typo. In the
  statement of \ref{e.Yn'} on \cite[Page
  1488]{Jaeger2012SNADiophantine}, the given lower bound is $2\eps_j$
  instead of $2\eps_{j-1}$.  However, it can be seen from estimate
  (4.21) in \cite[Lemma 4.7]{Jaeger2012SNADiophantine} and its use in
  the proof of \cite[Lemma 4.9]{Jaeger2012SNADiophantine} that all the
  respective statements hold with a lower bound of $2\eps_{j-1}$.

  The second modification concerns the definition of $\cY_n$ in
  \ref{e.Yn}, where the index $l$ in the union on the right runs from
  $-M_j$ to $M_j+2$, instead of only from $-M_j+1$ to $M_j+1$ as in
  the respective definition in \cite{Jaeger2012SNADiophantine}. This
  is an adaption that we need to make for technical reasons. However,
  this difference does not have any influence on the proofs in
  \cite{Jaeger2012SNADiophantine}, which go through in literally the
  same way, so that the result remains valid in the above form.
\end{rem}

The statement of Theorem~\ref{t.Main_qualitative} provides the basis for our
further analysis.  In addition we will need a number of technical
lemmas which allow to control the behaviour of orbits of finite length on
time-scales corresponding to the slow-recurrence conditions \ref{e.Xn} and
\ref{e.Yn}. The philosophy of these statements is the following. Suppose
$(\theta_0,x_0)\in\kreis\times C$ and let
$(\theta_n,x_n)=f^n_\tau(\theta_0,x_0)$. Then the almost invariance of the
contracting region, given by (\ref{eq:Cinvariance}), implies that $x_n\in C$ as
long as $\theta_j\notin \cI_0$ for all $j=0\ld n-1$. Thus, an orbit that starts
in the contracting region will stay there as long as its $\theta$-coordinate
stays away from the critical region $\cI_0$. The key observation on which the
whole multiscale analysis hinges is the fact that even for longer orbits, whose
first coordinates do visit the critical regions, a similar statement
nevertheless holds at least `most of the times'. In order to make this precise,
let
\begin{equation} \label{e.Z_nV_n} \cV^-_n \ = \
  \bigcup_{j=0}^{n}\bigcup_{l=-M_j+2}^0(\mathcal{I}_j+l\omega) \eqand
  \cW^+_n:=\bigcup_{j=0}^{n}\bigcup_{l=1}^{M_j+1}(\mathcal{I}_j+l\omega)
\end{equation}
Then we have
\begin{Lemma}[{\cite[Lemma 4.4]{fuhrmann2013NonsmoothSaddleNodesI}},
  Forwards Iteration]
  \label{lem-ess} Suppose $f_\tau$ satisfies (\ref{eq:Cinvariance})
  and $(\mathcal{Y})_{n-1}$ holds.  Let $\mathcal{L}\geq 0$ be the
  first integer such that $\theta_{\mathcal{L}}\in \mathcal{I}_n$.
  Then
  \begin{equation}\tag*{$(\mathcal{B}1)_n$}\label{con-b1}
\left\{
\begin{array}{lll}
& \theta_0\notin \cV^-_{n-1}\\
& x_0\notin \inte(E)
\end{array}
\right.
\end{equation}
implies that
\begin{equation}\tag*{$(\mathcal{C}1)_n$}
  \theta_m\notin\cW^+_{n-1}\ \follows \  x_m\in \inte(C) \quad \forall
  m=1,\ldots,\mathcal{L} \ .
\end{equation}
\end{Lemma}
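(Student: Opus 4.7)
The plan is to prove $(\mathcal{C}1)_n$ by induction on $m\in\{1,\ldots,\mathcal{L}\}$, treating \eqref{eq:Cinvariance} as the only tool needed at each step. Fix $m$ with $\theta_m\notin\cW^+_{n-1}$; to conclude $x_m\in\inte(C)$ by invoking \eqref{eq:Cinvariance} for the fibre map $f_{\tau,\theta_{m-1}}$, two things must be verified: (a) $\theta_{m-1}\notin\cI_0$, and (b) $x_{m-1}\in\closure(\T^1\smin E)$.

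Condition (a) is immediate, since $\cI_0+\omega\ssq\cW^+_{n-1}$ (take $j=0$, $l=1$, which is admissible because $M_0+1\geq 1$), so $\theta_m\notin\cW^+_{n-1}$ forces $\theta_{m-1}=\theta_m-\omega\notin\cI_0$. For condition (b), if $\theta_{m-1}\notin\cW^+_{n-1}$ then the inductive hypothesis (or, when $m=1$, the initial assumption $x_0\notin\inte(E)$ from $(\mathcal{B}1)_n$) already gives $x_{m-1}\in\inte(C)\ssq\T^1\smin E$, so \eqref{eq:Cinvariance} yields $x_m\in\inte(C)$ and the step is closed.

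The delicate sub-case is $\theta_{m-1}\in\cW^+_{n-1}$. Since $\theta_m=\theta_{m-1}+\omega\notin\cW^+_{n-1}$, this can only happen when $\theta_{m-1}\in\cI_j+(M_j+1)\omega$ for some $j\in\{0,\ldots,n-1\}$ — the unique index position at which the shift by $\omega$ exits the admissible range $1\leq l\leq M_j+1$. The slow-recurrence condition $(\mathcal{Y})_{n-1}$ then forces this $j$ to be essentially unique and separates the corresponding excursion $\theta_{m-M_j-2},\theta_{m-M_j-1},\ldots,\theta_{m-1}$ from the critical sets $\cI_{j'}+l'\omega$ of the lower scales $j'<j$. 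To complete verification (b) in this case I would run a sub-induction on $j$: at the bottom scale $j=0$, the excursion has length only $M_0+2=5$, and $x_{m-M_0-2},\ldots,x_{m-1}$ can be tracked directly by \eqref{eq:Cinvariance}, exploiting the breathing room $\theta_0\notin\cV^-_{n-1}$ before the orbit is first allowed to re-enter $\cI_0$; for $j\geq 1$, the recursive definition of $\cA_j$ and $\cC_j$ in \eqref{e.An}--\eqref{e.Cn} — which builds $\cI_j$ out of $M_{j-1}$-iterates of points starting in the contracting region $C$ — combined with the separation from $\cY_{j-1}$ afforded by $(\mathcal{Y})_{n-1}$ allows the length-$(M_j+2)$ excursion to be decomposed into at most finitely many lower-scale excursions, each handled by the sub-inductive hypothesis.

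The principal obstacle is precisely this sub-induction on $j$: during a scale-$j$ excursion the orbit may transit through lower-scale critical regions where $x$ briefly enters $E$, and the quantitative content of the lemma is exactly the assertion that by the time the excursion exits (at $\theta_{m-1}\in\cI_j+(M_j+1)\omega$) the orbit has returned to the contracting region. Executing this rigorously requires simultaneously tracking the $\theta$-combinatorics using $\cY_{n-1}$ and the $x$-dynamics through the contraction/expansion estimates, and is the technical core of \cite[Lemma~4.4]{fuhrmann2013NonsmoothSaddleNodesI}.
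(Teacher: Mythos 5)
You correctly set up the outer induction on $m$, and you correctly reduce the hard case to $\theta_{m-1}\in\cI_j+(M_j+1)\omega$ for some $j\leq n-1$. However, the sub-induction you sketch does not close. Your base case $j=0$ (``$x_{m-M_0-2},\ldots,x_{m-1}$ can be tracked directly by \eqref{eq:Cinvariance}'') fails: once $\theta_{m-M_0-2}\in\cI_0$, condition \eqref{eq:Cinvariance} gives no information about $x_{m-M_0-1}=f_{\tau,\theta_{m-M_0-2}}(x_{m-M_0-2})$, which may perfectly well land in $\inte(E)$; and once $x$ is in $\inte(E)$, \eqref{eq:Cinvariance} gives no further control whatsoever, since the almost-invariance only constrains images of points \emph{outside} $\inte(E)$. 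The same obstruction recurs at every scale $j$, and it cannot be removed by ``decomposing'' the scale-$j$ excursion into lower-scale excursions: the time windows $\{m-M_j-2,\ldots,m-1\}$ are nested in $j$, not disjoint, so no such decomposition exists.

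The missing idea is a contradiction argument driven by the recursive definitions \eqref{e.An}--\eqref{e.In} together with the constraint $m\leq\mathcal{L}$. Let $L$ be the last time strictly before $m$ with $\theta_L\in\cI_0$ (if there is none, \eqref{eq:Cinvariance} applies at every step and one is done), and let $j_0$ be the largest scale with $\theta_L\in\cI_{j_0}$; since $L<\mathcal{L}$ one has $j_0<n$, and since $\theta_m\notin\cW^+_{n-1}$ one has $m>L+M_{j_0}+1$. Using $\theta_0\notin\cV^-_{n-1}$, $(\cY)_{n-1}$ and the induction hypothesis on $m$, one checks that $x_{L-M_{j_0}+1}\in\inte(C)$, so $(\theta_{L-M_{j_0}+1},x_{L-M_{j_0}+1})\in\cA_{j_0}$. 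If one assumes for contradiction that $x_{L+M_{j_0}+1}\in\inte(E)$, then $(\theta_{L+M_{j_0}+1},x_{L+M_{j_0}+1})\in\cB_{j_0}$, hence $(\theta_L,x_L)\in f^{M_{j_0}-1}(\cA_{j_0})\cap f^{-M_{j_0}-1}(\cB_{j_0})=\cC_{j_0}$ and thus $\theta_L\in\cI_{j_0+1}$, contradicting the maximality of $j_0$. Therefore $x_{L+M_{j_0}+1}\notin\inte(E)$, and since $\theta_l\notin\cI_0$ for all $L<l<m$, repeated application of \eqref{eq:Cinvariance} gives $x_m\in\inte(C)$. Note that this argument is entirely combinatorial, using only \eqref{eq:Cinvariance} and $(\cY)_{n-1}$; the ``contraction/expansion estimates'' you invoke at the end of your proposal are not needed, which is precisely why the hypotheses of the lemma are as light as they are.
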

We note that in \cite{fuhrmann2013NonsmoothSaddleNodesI} the lemma is
stated under the additional assumption that $(\cX)_{n-1}$ holds as
well, but this is actually not needed and is not used in the proof.
The same applies to Lemma~\ref{lem-ess_back} below.

It can be seen from (\ref{eq:10})--(\ref{e.eps_j}) that for large
$\alpha$ the exceptional sets $\cV^-_n$ and $\cW^+_n$ are very small.
Hence, an orbit starting in $(\kreis\smin\cV^-_n)\times C$ typically
remains trapped in the contracting region most of the time, until it
enters $\cI_{n+1}\times C$. A similar statement holds for the
backwards iteration.  Let
\begin{equation}
\label{e.Z_n_W_n} \cV_{n}^+ \ = \
\bigcup_{j=0}^{n}\bigcup_{l=1}^{M_j}(\mathcal{I}_j+l\omega) \eqand
\cW^-_n \ = \
\bigcup_{j=0}^{n}\bigcup_{l=-M_j+1}^0(\mathcal{I}_j+l\omega) \ .
\end{equation}
\begin{Lemma}[{\cite[Lemma 4.4]{fuhrmann2013NonsmoothSaddleNodesI}},
  Backwards Iteration] \label{lem-ess_back}Suppose $f_\tau$ satisfies
  (\ref{eq:Cinvariance}) and $(\mathcal{Y})_{n-1}$ holds. Let
  $\mathcal{R}\geq 0$ be the first integer such that
  $\theta_{-\mathcal{R}}\in \mathcal{I}_n+\omega$.
  Then
\begin{equation}\tag*{$(\mathcal{B}2)_n$}\label{con-b2}
\left\{
\begin{array}{lll}
& \theta_0\notin\cV_{n-1}^+\\
& x_0\notin \inte(C)
\end{array},
\right.
\end{equation}
implies
\begin{equation}\tag*{$(\mathcal{C}2)_n$}
  \theta_{-m}\notin\cW^-_{n-1}\ \follows \ x_{-m}\in \inte(E)  \quad \forall
  m=1,\ldots,\mathcal{R} \ .
\end{equation}
\end{Lemma}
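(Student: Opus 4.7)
The plan is to prove $(\mathcal{C}2)_n$ by induction on $m\in\{1,\ldots,\mathcal{R}\}$, paralleling the argument for the forward iteration Lemma~\ref{lem-ess}. The workhorse at each step is the inverse invariance condition \eqref{eq:Einvariance}: whenever the base point $\theta$ avoids $\mathcal{I}_0+\omega$, the inverse fibre map $f_{\tau,\theta}^{-1}$ sends $\mathrm{cl}(\mathbb{T}^1\setminus C)$ into $\mathrm{int}(E)$. For the base case $m=1$, the hypothesis $\theta_0\notin\mathcal{V}^+_{n-1}$ already contains $\theta_0\notin\mathcal{I}_0+\omega$ (the $j=0$, $l=1$ block, using $M_0=3$), so \eqref{eq:Einvariance} at $\theta_0$ combined with $x_0\notin\mathrm{int}(C)$ yields $x_{-1}\in\mathrm{int}(E)$.

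For the inductive step, fix $m\geq 2$ and suppose the implication holds for every $m'<m$ and that $\theta_{-m}\notin\mathcal{W}^-_{n-1}$. If also $\theta_{-(m-1)}\notin\mathcal{W}^-_{n-1}$, the inductive hypothesis gives $x_{-(m-1)}\in\mathrm{int}(E)\subseteq\mathrm{cl}(\mathbb{T}^1\setminus C)$; since the $j=0$, $l=0$ summand $\mathcal{I}_0$ is part of $\mathcal{W}^-_{n-1}$, one has $\theta_{-m}\notin\mathcal{I}_0$, equivalently $\theta_{-(m-1)}\notin\mathcal{I}_0+\omega$, and applying \eqref{eq:Einvariance} at $\theta_{-(m-1)}$ closes the step. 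The substantive subcase is $\theta_{-(m-1)}\in\mathcal{W}^-_{n-1}$ with $\theta_{-m}\notin\mathcal{W}^-_{n-1}$. Since the base step is $+\omega$ and $\mathcal{W}^-_{n-1}$ consists of translates $\mathcal{I}_k+l\omega$ with $l\in\{-M_k+1,\ldots,0\}$, this transition forces $\theta_{-(m-1)}\in\mathcal{I}_j-(M_j-1)\omega$ for some $j\in\{0,\ldots,n-1\}$; going backwards, the orbit is precisely exiting a level-$j$ critical event.

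When $j\geq 1$, the strengthened slow-recurrence condition $(\mathcal{Y}')_{n-1}$ places $\mathcal{I}_j-(M_j-1)\omega$ at distance greater than $2\varepsilon_{j-1}$ from $\mathcal{Y}_{j-1}$, so that throughout the level-$j$ window none of the lower-level translates $\mathcal{I}_k+l\omega$ with $k<j$ and $-M_k\leq l\leq M_k+2$ can be visited. Combined with the nested geometric control on $\mathcal{C}_0,\ldots,\mathcal{C}_{j-1}$ provided by Theorem~\ref{t.Main_qualitative} and the expansion/contraction bounds \eqref{eq:bounds2}--\eqref{eq:bounds3}, this allows the conclusion to be propagated backwards through the excursion fibre by fibre via \eqref{eq:Einvariance}, yielding $x_{-m}\in\mathrm{int}(E)$. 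The degenerate case $j=0$ is easier: the excursion has length $M_0-1=2$, the hypothesis $\theta_0\notin\mathcal{V}^+_{n-1}$ already excludes the relevant base points from $\mathcal{I}_0+\omega$, and a direct three-step iteration of \eqref{eq:Einvariance} suffices.

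The delicate point, and where most of the work lies, is the case $j\geq 1$: one must certify that the chain of inverse-fibre applications closes up cleanly across a window of length proportional to $M_j$ without any base point accidentally falling into $\mathcal{I}_0+\omega$. This is precisely what the strengthened condition $(\mathcal{Y}')_{n-1}$ is engineered to prevent, and the argument is the time-reverse of the corresponding step in the proof of Lemma~\ref{lem-ess} given in \cite[Lemma 4.4]{fuhrmann2013NonsmoothSaddleNodesI}, with the roles of $E$ and $C$ and of forward and backward excursions swapped throughout.
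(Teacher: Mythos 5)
The paper does not give its own proof of this lemma; it cites \cite[Lemma 4.4]{fuhrmann2013NonsmoothSaddleNodesI} and even remarks explicitly that the statement is ``purely combinatorial in nature, and only rel[ies] on the almost invariance of the contracting and expanding region given by (\ref{eq:Cinvariance}).'' Your attempt has several genuine gaps that conflict with this.

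First, you invoke hypotheses the lemma does not grant. The lemma assumes $(\mathcal{Y})_{n-1}$, which gives only \emph{positive} distance from $\cY_{j-1}$; your argument appeals to the strengthened version $(\mathcal{Y}')_{n-1}$ with a quantitative gap $>2\eps_{j-1}$. More seriously, you invoke ``the nested geometric control on $\mathcal{C}_0,\ldots,\mathcal{C}_{j-1}$ provided by Theorem~\ref{t.Main_qualitative} and the expansion/contraction bounds \eqref{eq:bounds2}--\eqref{eq:bounds3}.'' None of these are in the hypotheses. The lemma must be established using only $(\mathcal{A}1)$/$(\mathcal{A}1')$ and $(\mathcal{Y})_{n-1}$ — indeed Theorem~\ref{t.Main_qualitative} and the derivative estimates are logically downstream of this combinatorial lemma, not prerequisites for it, so appealing to them would be circular in the original framework.

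Second, the step where all the work lives — the excursion subcase $j\geq 1$ — is not actually carried out, and the one concrete claim you make about it is false. You assert that $(\mathcal{Y})$ (or $(\mathcal{Y}')$) ``places $\mathcal{I}_j-(M_j-1)\omega$ at distance $\ldots$ from $\mathcal{Y}_{j-1}$, so that throughout the level-$j$ window none of the lower-level translates $\mathcal{I}_k+l\omega$ with $k<j$ can be visited.'' This is not what the condition says. $(\mathcal{Y})_{n}$ controls only the two \emph{endpoint} translates $\mathcal{I}_j-(M_j-1)\omega$ and $\mathcal{I}_j+(M_j+1)\omega$; it imposes no restriction on the interior translates $\mathcal{I}_j+l\omega$, $-M_j+1<l\leq 0$, which absolutely can — and in general do — meet lower-level critical regions. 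Handling exactly these nested visits is the entire point of the recursive structure of the argument: one proceeds by induction on the level, shows that a level-$j$ backward excursion both opens and closes at base points that are ``clean'' of lower-level critical sets (this is what $(\mathcal{Y})$ buys), and uses the lower-level statement $(\mathcal{C}2)_{j}$ to propagate through the nested sub-excursions inside the window. A single induction on $m$, as you set it up, lacks a strong enough hypothesis to re-enter the argument after exiting a level-$j$ window, since it says nothing about $x$ at times $m'$ where $\theta_{-m'}\in\cW^-_{n-1}$.

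Your base case and the easy subcase (both consecutive base points outside $\cW^-_{n-1}$) are correct, and the identification that an exit from $\cW^-_{n-1}$ forces $\theta_{-(m-1)}\in\mathcal{I}_j-(M_j-1)\omega$ is a good observation. But the proof as written is not complete: it substitutes hand-waving and inapplicable machinery for the central combinatorial excursion argument, and misreads what the slow-recurrence condition $(\mathcal{Y})$ actually guarantees.
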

It should be emphasized here that the above two statements are
purely combinatorial in nature, and only rely on the almost
invariance of the contracting and expanding region given by
(\ref{eq:Cinvariance}). If they are combined with quantitative
estimates on the derivates like in (\ref{eq:bounds1})--(\ref{eq:s}),
they can be used to obtain a wealth of further information on
finite-time Lyapunov exponents or the geometry of iterates of
suitable small curves or sets. The basis of such a quantified
analysis are suitable estimates on the proportion of time spent in
the contracting or expanding region.  To that end, given
$\tau,\theta_0,x_0$ and $0\leq m\leq N$, let
\begin{eqnarray}
\mathcal{P}_m^{N} & = & \#\{l\in [m,N-1]: x_l\in C\} \ , \\
\mathcal{Q}_m^{N} & = & \#\{l\in [m,N-1]: x_{-l}\in E\} \ .
\end{eqnarray}
Further, let $\beta_0=1$ and $\beta_n=\prod_{j=0}^{n-1}(
1-K_j^{-1})$. Note that due to the choice of the $K_j$ in Theorem
\ref{t.Main_qualitative} and (\ref{e.t}), we have
\begin{equation}\label{e.beta}
\frac{2}{p}\beta_n-(1-\beta_n)p \ \geq \ \frac{1}{p}
\end{equation}
for all $n\in\N$. Lemmas~\ref{lem-ess} and \ref{lem-ess_back} now
lead to the following quantitative estimates.
\begin{Lemma}[{\cite[Lemma 4.6]{fuhrmann2013NonsmoothSaddleNodesI}}]
  \label{lem-2} Suppose $f_\tau$ satisfies (\ref{eq:Cinvariance}) and
  conditions $(\cX)_{n-1}$ and $(\cY)_{n-1}$ hold.  Let
  $0<L_1<L_2<\ldots<L_J=\mathcal{L}$ denote all those times $L_i\leq
  \mathcal{L}$ for which $\theta_{L_i}\in\mathcal{I}_{n-1}$.  Further,
  assume that \ref{con-b1} holds. Then for each $j=1,\ldots,J$, we
  have
\begin{equation}\label{number_P}
\mathcal{P}_m^{L_j}\geq\beta_n(L_j-m)\ \ \forall m=0,\ldots,L_j-1.
\end{equation}
Further $x_{L_j}\in C,\ \forall j=1,\ldots,J$.

Similarly, let $0<R_1<\ldots<R_J=\mathcal{R}$ denote are all those
times $R_i\leq \mathcal{R}$ for which
$\theta_{-R_i}\in\mathcal{I}_{n-1}+\omega$. Then for each
$j=1,\ldots,J$, we have
\begin{equation}\label{number_Q}
\mathcal{Q}_m^{R_j}\geq\beta_n(R_j-m)\ \ \forall m=0,\ldots,R_j-1.
\end{equation}
Further $x_{-R_j}\in E,\ \forall j=1,\ldots,J$.
\end{Lemma}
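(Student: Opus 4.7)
The strategy is to combine Lemma~\ref{lem-ess} with a level-by-level accounting of the ``bad'' iterates, i.e., those $l$ with $x_l \notin C$. By Lemma~\ref{lem-ess} applied at level $n$, any $l \in [1,\mathcal{L}]$ with $x_l \notin \inte(C)$ must satisfy $\theta_l \in \cW^+_{n-1}$, so
\[
L_j - m - \mathcal{P}_m^{L_j} \ \leq \ \#\bigl\{l \in [m, L_j-1] : \theta_l \in \cW^+_{n-1}\bigr\},
\]
and it suffices to bound the right-hand side by $(1-\beta_n)(L_j - m)$.

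I would proceed by induction on $n$ using the decomposition $\cW^+_{n-1} = \cW^+_{n-2} \cup \bigcup_{k=1}^{M_{n-1}+1}(\cI_{n-1} + k\omega)$. The second piece consists of iterates falling within $M_{n-1}+1$ steps of a visit of $\theta$ to $\cI_{n-1}$; by $(\mathcal{X})_{n-1}$, consecutive such visits are separated by more than $2K_{n-1}M_{n-1}$ iterates (since $\cI_{n-1}$ is disjoint from $\cI_{n-1} + \omega, \ldots, \cI_{n-1} + 2K_{n-1}M_{n-1}\omega$), so the density of bad indices contributed by this piece is at most $(M_{n-1}+1)/(2K_{n-1}M_{n-1}) \leq K_{n-1}^{-1}$. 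This accounts for the new factor $(1 - K_{n-1}^{-1})$ in $\beta_n = \beta_{n-1}(1 - K_{n-1}^{-1})$. The contribution of $\cW^+_{n-2}$ within each sub-interval $(L_{i-1}, L_i]$ is controlled by applying the inductive hypothesis to a suitably shifted sub-orbit, yielding the factor $\beta_{n-1}$.

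The applicability of the inductive hypothesis at a shifted basepoint requires the condition $(\mathcal{B}1)_{n-1}$, which forbids the basepoint $\theta$-coordinate from lying in $\cV^-_{n-2}$. Since $\theta_{L_{i-1}} \in \cI_{n-1} \subseteq \cI_{n-2} \subseteq \cV^-_{n-2}$, one must shift the basepoint forward by a few iterates, using $(\mathcal{Y})_{n-1}$ to guarantee that the orbit exits $\cV^-_{n-2}$ quickly and to control $x$ on this small initial interval. The additional claim $x_{L_j} \in C$ is immediate: $(\mathcal{X})_{n-1}$ yields $\cI_{n-1} \cap \cW^+_{n-1} = \emptyset$, so Lemma~\ref{lem-ess} gives $x_{L_j} \in \inte(C)$ directly.

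The backwards estimate on $\mathcal{Q}_m^{R_j}$ is proved by the symmetric argument, with Lemma~\ref{lem-ess_back} replacing Lemma~\ref{lem-ess} and $\cW^-_{n-1}$ replacing $\cW^+_{n-1}$. The main technical obstacle is the clean setup of the inductive recursion: at each level one loses boundary buffers of order $M_{n-1}$ near the endpoints of the sub-intervals $(L_{i-1}, L_i]$, and absorbing these additive errors into the multiplicative product $\beta_n = \prod_{j=0}^{n-1}(1-K_j^{-1})$ relies on the super-exponential scale separation $M_{n-1} \gg \sum_{j<n-1} M_j$ from \eqref{e.M_j} and on the lower bound $L_i - L_{i-1} \geq 2K_{n-1}M_{n-1}$ on the length of each sub-interval.
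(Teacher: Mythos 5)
This lemma is quoted in the paper by citation to \cite[Lemma 4.6]{fuhrmann2013NonsmoothSaddleNodesI}; the paper does not contain a proof of its own, so there is no in-paper argument to compare against. Your sketch is, however, a plausible and essentially standard reconstruction of the inductive counting argument used throughout this line of work (Young, Bjerkl\"ov, J\"ager, Fuhrmann), and it identifies the right ingredients: the reduction via Lemma~\ref{lem-ess} to counting visits of $\theta_l$ to $\cW^+_{n-1}$, the decomposition $\cW^+_{n-1}=\cW^+_{n-2}\cup\bigcup_{k=1}^{M_{n-1}+1}(\cI_{n-1}+k\omega)$, the gap $L_i-L_{i-1}>2K_{n-1}M_{n-1}$ from $(\cX)_{n-1}$, the use of $(\cY)_{n-1}$ to restart the induction at a shifted basepoint, and $\cI_{n-1}\cap\cW^+_{n-1}=\emptyset$ for the claim $x_{L_j}\in C$.

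One point in your write-up deserves emphasis because a careless reading of it produces the wrong constant: you cannot simply add the density $K_{n-1}^{-1}$ of new bad iterates to the density $1-\beta_{n-1}$ from the inductive hypothesis, since $K_{n-1}^{-1}+(1-\beta_{n-1})>1-\beta_{n-1}(1-K_{n-1}^{-1})=1-\beta_n$. The product structure of $\beta_n$ only appears if, within each block $(L_{i-1},L_i]$, you first discard the initial $\approx M_{n-1}+2$ iterates (those potentially in $\cI_{n-1}+k\omega$, $k\leq M_{n-1}+1$), then apply the inductive hypothesis to the \emph{remaining} sub-orbit of length $\geq (1-K_{n-1}^{-1})(L_i-L_{i-1})$, where the fraction of good iterates is $\geq\beta_{n-1}$; this is what produces $\beta_{n-1}(1-K_{n-1}^{-1})=\beta_n$. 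Your phrase ``applying the inductive hypothesis to a suitably shifted sub-orbit'' is consistent with the correct argument, but the sentence about ``the density of bad indices contributed by this piece'' followed by ``the factor $\beta_{n-1}$ from $\cW^+_{n-2}$'' reads like an additive union bound, which would be insufficient. Also, to make the restart at $m'=L_{i-1}+M_{n-1}+2$ legitimate one needs $\theta_{m'}\notin\cW^+_{n-1}$ (so that Lemma~\ref{lem-ess} gives $x_{m'}\in\inte(C)$, hence $x_{m'}\notin\inte(E)$), which is where $(\cY)_{n-1}$ together with the nesting $\cI_{n-1}\subseteq\cI_j$ actually enters; your sketch invokes $(\cY)_{n-1}$ in the right spirit but does not pin down this specific use.
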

These estimates can be used to obtain precise control on the size
and parameter dependence of the critical intervals.

\begin{prop}[{\cite[Proposition 3.11]{jaeger:2009a} and
  \cite[Lemma 4.5]{Jaeger2012SNADiophantine}}]\label{finite_times}
Suppose $\hat f\in \mathcal P_\omega$ satisfies
(\ref{eq:Cinvariance})-(\ref{eq:boundinterval}), $(\cX)_{n-1},\
(\cY)_{n-1}$ hold for some $n\geq 1$ and $\alpha$ is sufficiently
large. Then the two connected components of $\mathcal I_{n}(\tau)$,
denoted as $I_{n}^\iota(\tau)=(a_{n}^\iota(\tau),\
b_{n}^\iota(\tau)),\ \iota=1,2$, are differentiable in $\tau$.
Further, we have
\begin{equation}\label{est_interval}
|I_{n}^\iota(\tau)|\leq \varepsilon_{n},\ \ \iota=1,2,
\end{equation}
\begin{equation}\label{est_relative}
\min\{\partial_\tau a^1_{n}(\tau),\partial_\tau b^1_{n}(\tau)\} -
  \max\{\partial_\tau a^2_{n}(\tau),\partial_\tau b^2_{n}(\tau)\} \ > \
  \ell/S
  \ ,
\end{equation}
\begin{equation}\label{est_partial}
|\partial_\tau I_{n}^\iota(\tau)|\ \leq\  2L/s,\ \ \iota=1,2,
\end{equation}
where $|\partial_\tau I_{n}^\iota(\tau)|=\max\{|\partial_\tau
a_{n}^\iota(\tau)|,\ |\partial_\tau b_{n}^\iota(\tau)|\},\
\iota=1,2$.
\end{prop}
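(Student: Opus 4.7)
The plan is to proceed by induction on $n$, exploiting the recursive definition $\cI_n = \mathrm{int}(\pi_1(\cC_{n-1}))$ together with the time-statistics supplied by Lemma \ref{lem-2}. The base case $n=1$ is a direct consequence of the transversality hypotheses \eqref{eq:crossing}--\eqref{eq:s} combined with \eqref{e.I_0-derivative} and \eqref{eq:boundinterval}: only two applications of $f_\tau$ are involved in the formation of $\cC_0$, so the implicit function theorem directly yields differentiability of the endpoints $a_1^\iota,b_1^\iota$ and the required derivative bounds, where the constants $\ell/S$ and $2L/s$ from \eqref{est_relative} and \eqref{est_partial} are inherited from \eqref{e.I_0-derivative} and \eqref{eq:boundinterval}.

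For the inductive step, assume the conclusion at level $n-1$, and consider any $(\theta_0,x_0)\in\cA_{n-1}^\iota$, so that $\theta_0 = \theta - (M_{n-1}-1)\omega$ with $\theta\in I_{n-1}^\iota$. By the slow-recurrence condition $(\cY)_{n-1}$, the point $\theta_0$ lies outside $\cV_{n-1}^-$, so hypothesis \ref{con-b1} of Lemma \ref{lem-ess} is met. Lemma \ref{lem-2} then shows that the orbit spends a proportion at least $\beta_n$ of its first $M_{n-1}-1$ iterates in $C$; combining this with \eqref{eq:bounds2}, \eqref{eq:bounds3} and \eqref{e.beta} yields total vertical contraction at most $\alpha^{-M_{n-1}/p}$, so that $f_\tau^{M_{n-1}-1}(\cA_{n-1}^\iota)$ is a nearly horizontal strip in $I_{n-1}^\iota\times C$ of vertical extent $O(\alpha^{-M_{n-1}/p})$. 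A symmetric application of Lemma \ref{lem-ess_back} together with the backwards half of Lemma \ref{lem-2} shows that $f_\tau^{-M_{n-1}-1}(\cB_{n-1}^\iota)$ is a nearly horizontal strip in $I_{n-1}^\iota\times E$ of comparable vertical extent.

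One further forward iterate of the first strip, across the critical coordinate $\theta_{M_{n-1}-1}\in\cI_{n-1}$, produces via \eqref{eq:crossing}--\eqref{eq:s} a strip slanted across $E$ with vertical slope bounded below by a constant multiple of $s$; thus $\cC_{n-1}^\iota$ is a parallelogram whose $\theta$-projection $I_n^\iota(\tau)=(a_n^\iota(\tau),b_n^\iota(\tau))$ has length at most $2\alpha^{-M_{n-1}/p}/s \leq \eps_n$ by \eqref{e.eps_j}. The endpoints $a_n^\iota(\tau)$, $b_n^\iota(\tau)$ are determined implicitly as the intersections of the upper and lower boundaries of the two strips, so differentiability in $\tau$ follows from the implicit function theorem, and chain-rule bookkeeping produces explicit formulas for $\partial_\tau a_n^\iota$ and $\partial_\tau b_n^\iota$. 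The dominant contribution to these derivatives is the vertical $\tau$-displacement of the slanted strip at the critical iterate, which is sandwiched between $\ell$ and $L$ by \eqref{eq:bounddlambda} and divided by the slope $\geq s$, reproducing the bound $2L/s$ in \eqref{est_partial}; the separation \eqref{est_relative} is inherited from the inductive hypothesis at level $n-1$ applied to this same critical iterate. The main obstacle is to control the cumulative correction coming from the $M_{n-1}-1$ non-critical iterates, each contributing a $\tau$-derivative amplified by $L$ but damped by the subsequent vertical contractions; Lemma \ref{lem-2} bounds the total by a geometric series of order $\alpha^{-M_{n-1}/p}$, so that choosing $\alpha$ sufficiently large renders this correction negligible compared to the margin $\ell/S$, and both bounds survive at level $n$.
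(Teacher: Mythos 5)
The paper does not prove Proposition~\ref{finite_times}; it cites it as previously established in \cite[Proposition 3.11]{jaeger:2009a} and \cite[Lemma 4.5]{Jaeger2012SNADiophantine}, and the only text the authors add is the remark that for $n=0$ the estimates hold by assumption. So there is no in-paper proof to compare against; what you have produced is a reconstruction of the argument one would find in the references.

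As a reconstruction it captures the right framework: the endpoints $a_n^\iota(\tau),b_n^\iota(\tau)$ are zeros of the boundary-graph difference between the slanted strip $f^{M_{n-1}}(\cA_{n-1}^\iota)$ and the near-horizontal strip $f^{-M_{n-1}}(\cB_{n-1}^\iota)$ over $\cI_{n-1}+\omega$; differentiability and the bound \eqref{est_partial} come from the implicit function theorem together with chain-rule expansions of $\partial_\theta$ and $\partial_\tau$ of the iterated fibre maps, with Lemma~\ref{lem-2} (via the slow-recurrence conditions and Lemmas~\ref{lem-ess}, \ref{lem-ess_back}) supplying the time statistics that make the correction terms a geometric series of order $\alpha^{-\cdot/p}$, and \eqref{e.eps_j} closing the size bound \eqref{est_interval}. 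Two imprecisions are worth flagging. First, the base case with $M_0=3$ involves $M_0-1$ forward and $M_0+1$ backward iterates, not merely ``two applications of $f_\tau$''; it is still a finite, $n$-independent scheme, but the phrasing is off. Second, and more substantively, the relative-speed estimate \eqref{est_relative} is not ``inherited from the inductive hypothesis at level $n-1$''. The boundary graphs entering the IFT formula, say $\varphi^-_{f^{M_{n-1}}(\cA^\iota_{n-1})}(\theta,\tau) = f^{M_{n-1}}_{\tau,\theta-M_{n-1}\omega}(c^\pm)$, are explicit functions of $(\theta,\tau)$ alone and carry no hidden dependence on $I_{n-1}^\iota(\tau)$ or on $\partial_\tau a_{n-1}^\iota$; consequently the derivative formula for $\partial_\tau a_n^\iota$ does not involve the level-$(n-1)$ endpoint derivatives at all. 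The constant $\ell/S$ re-emerges at every level directly from the level-zero conditions: the last forward step lands in $\cI_{n-1}\ssq\cI_0$, where \eqref{eq:s} gives the opposite slopes for $\iota=1,2$ and \eqref{eq:bounddlambda} gives the $\tau$-monotonicity bound, so the leading term of $\partial_\tau a_n^1 - \partial_\tau a_n^2$ is already at least $\ell/S$ with the earlier iterates contributing only $O(\alpha^{-2/p})$ corrections. The inductive hypothesis is used for existence, for the nestedness $I_n^\iota\ssq I_{n-1}^\iota\ssq\cdots\ssq I_0^\iota$ and for the orbit control, not to propagate the $\tau$-derivative bound itself.
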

Note that for $n=0$, the respective estimates hold by assumption.
\smallskip

As a first consequence of the above statements, we obtain that the
emptyness of a critical region implies mode-locking.
\begin{prop} \label{p.modelocking_criterion}
  The constants $\alpha_*'$ and $\eps_*'$ in
  Theorem~\ref{t.Main_qualitative} can be chosen such that if
  $\alpha>\alpha_*$ and $\eps_0<\eps_*$, then the following holds.

  Let $K_0\ld K_n$ be chosen as in Theorem \ref{t.Main_qualitative}.
  Further, suppose that for some $\tau\in\kreis$ the numbers
  $M_0\ld M_n$ can be chosen such that (\ref{e.M_j}) holds for $j=0\ld
  n-1$ and conditions $(\cX)_{n-1}$ and $(\cY)_n$ are satisfied, but
  $\cC_{n}=\emptyset$.  Then $f_\tau$ has an attracting continuous
  invariant graph.  In particular, $f_\tau$ is mode-locked.
\end{prop}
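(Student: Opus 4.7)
The strategy is to upgrade the combinatorial hypothesis $\cC_n=\emptyset$ to the existence of a forward-invariant region on which $f_\tau$ is fibrewise contracting, deduce the existence of an attracting continuous invariant graph, and finally conclude mode-locking via structural stability.

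\emph{Setup.} Under $(\mathcal{X})_{n-1}$ and $(\mathcal{Y})_n$, Proposition~\ref{finite_times} guarantees that each connected component $I_j^\iota(\tau)$ with $j\leq n$ is an interval of length at most $\eps_j$ depending smoothly on $\tau$, and Lemmas~\ref{lem-ess}, \ref{lem-ess_back}, \ref{lem-2} may be applied up to the time-scale $M_n$. The emptiness of $\cC_n=f_\tau^{M_n-1}(\cA_n)\cap f_\tau^{-M_n-1}(\cB_n)$ says that no orbit starting in $\cA_n\ssq\kreis\times C$ ends in $\cB_n\ssq\kreis\times E$ after $2M_n$ iterates. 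In the geometric picture of Figure~\ref{Fig.SNA}(c), this means that the forward image $f_\tau^{M_n-1}(\cA_n)$ lies entirely below the expanding strip $f_\tau^{-M_n-1}(\cB_n)$, inside $\cI_n\times C$.

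\emph{Trapping region.} I would then argue that every forward orbit issued from $\kreis\times C$ returns to $\kreis\times C$ within a uniformly bounded number of iterates. Take $(\theta_0,x_0)\in\kreis\times C$ with $\theta_0\notin\cV^-_{n-1}$. By Lemma~\ref{lem-ess} the orbit stays in $C$ whenever $\theta_k\notin\cW^+_{n-1}$, up to the first hitting time $\cL$ of $\cI_n$. At that instant $f_\tau^\cL(\theta_0,x_0)\in f_\tau^{M_n-1}(\cA_n)$, and $\cC_n=\emptyset$ forces this point to remain in $\cI_n\times C$ rather than being caught by the expanding strip $f_\tau^{-M_n-1}(\cB_n)$. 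Iterating the same argument starting from $f_\tau^\cL(\theta_0,x_0)$ keeps the orbit in $\kreis\times C$ indefinitely, with only brief controlled excursions through critical regions at lower levels. By Lemma~\ref{lem-2} at least a $\beta_n$-fraction of the iterates actually lie in $C$.

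\emph{Attracting graph and mode-locking.} The density bound together with (\ref{e.beta}) and the derivative estimates \eqref{eq:bounds1}--\eqref{eq:bounds3} yields a uniformly negative forward fibre Lyapunov exponent on $\kreis\times C$. A standard graph-transform argument, applied to the space of upper-semicontinuous functions $\kreis\to C$, then produces a unique attracting continuous invariant graph $\varphi:\kreis\to\kreis$ with $\lambda(\varphi)<0$. Such a graph persists under small $\cC^1$-perturbations: for $\tau'$ near $\tau$ there is a continuous invariant graph $\varphi_{\tau'}$ close to $\varphi$, and $\rho(f_{\tau'})$ equals the rotation number of $F_{\tau'}$ restricted to $\graph\varphi_{\tau'}$. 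This depends continuously on $\tau'$, while simultaneously taking values in the discrete set $\Q+\omega\Q$ by the result of Bjerkl\"ov and J\"ager cited in the introduction. Hence $\rho(f_{\tau'})$ is locally constant, so $\tau\in\cM(\hat f)$.

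\emph{Main obstacle.} The delicate step is the trapping argument. The hypothesis $\cC_n=\emptyset$ is stated only for orbits that start in $\cA_n$, so one has to transfer it to arbitrary orbits in $\kreis\times C$ by showing that at each passage through $\cI_n$ any such orbit is tracked by (the closure of) a piece of $f_\tau^{M_n-1}(\cA_n)$, and therefore inherits the same conclusion of remaining below the expanding strip. This requires invoking the geometric properties (ii)--(v) of the multiscale analysis at level $n$ and carefully accounting for the short excursions through critical regions at levels $j<n$, whose overall contribution to time spent outside $C$ is controlled by $(\mathcal{X})_{n-1}$ together with the super-exponential decay of $\eps_j$.
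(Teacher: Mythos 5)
Your outline contains the right intuitions (negative Lyapunov exponent on an invariant region, an attracting continuous graph, then mode-locking), and the setup step is fine, but there are genuine gaps that the paper's proof handles quite differently.

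\emph{Trapping argument.} You acknowledge that the transfer from ``$\cC_n=\emptyset$'' to ``arbitrary orbits starting in $\kreis\times C$ remain trapped'' is the delicate step, and indeed your sketch of ``tracking by a piece of $f^{M_n-1}(\cA_n)$'' does not close the gap: an orbit arriving at $\cI_n\times C$ at its first hitting time is not a priori inside the thin strip $f^{M_n-1}(\cA_n)$, so you cannot directly inherit the conclusion from $\cC_n=\emptyset$. The key observation you are missing is structural, not geometric: by (\ref{e.In}), $\cC_n=\emptyset$ implies $\cI_{n+1}=\emptyset$, so when Lemma~\ref{lem-ess} is applied at level $n+1$ the ``first hitting time of $\cI_{n+1}$'' is infinite and the conclusion $(\cC1)_{n+1}$ holds for \emph{all} $m$. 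This immediately gives the trapping for suitable starting fibres, with no tracking argument needed.

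\emph{Invariant region and graph.} Even granted the trapping, $\kreis\times C$ is not forward-invariant (orbits do leave $C$ on short excursions through $\cI_0$), so a graph transform on functions $\kreis\to C$ does not literally apply. The paper instead chooses an interval $\cJ$ avoiding $\cV_n^-\cup\cW_n^+$ (possible because $\Leb(\cV_n^-\cup\cW_n^+)$ is small), finds $K$ with $\cJ+K\omega\ssq\cJ'$, proves $f^K(\cJ\times C)\ssq(\cJ+K\omega)\times C$ using the now-infinite hitting time, and assembles the annular set $\cA=\bigcup_{j=0}^N f^{jK}(\cJ\times C)$ which is $f^K$-invariant and wraps once around the torus. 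The existence of the continuous attracting graph is then obtained from the negative Lyapunov exponent together with the result of Stark and Sturman \cite{sturman/stark:2000}, not a bare graph transform.

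\emph{Mode-locking.} Your final step is incorrect as stated: $\Q+\omega\Q$ (and its reduction mod~$1$) is dense in $\R$, not discrete, so continuity plus ``values in $\Q+\omega\Q$'' does not yield local constancy. Moreover, that value statement from \cite{bjerkloev/jaeger:2009} is a property of parameters already known to lie in $\cM(\hat f)$, which is exactly what one is trying to establish. The correct route, used in the paper, is to invoke directly the criterion of \cite{bjerkloev/jaeger:2009} that the existence of an attracting continuous invariant graph with negative vertical Lyapunov exponent implies mode-locking.
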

\proof For convenience, we omit the parameter $\tau$ throughout the
proof. First, by Proposition~\ref{finite_times}, we have
$|I_j^\iota|\leq \varepsilon_j,\ \iota=1,2,\ j=0\ld n$. Then by
(\ref{e.M_j}), (\ref{e.eps_j}) and (\ref{e.Z_nV_n}), we know that
$\mathcal W_n^+,\ \mathcal V_n^-$ are unions of small intervals which
satisfy the following estimates
\begin{equation}\label{esti_v_n}
\mathrm{Leb}(\mathcal W^+_n)\leq
\sum_{j=0}^n(M_j+1)\varepsilon_j<2M_0\varepsilon_0+
\frac{8}{s}\cdot\sum_{j=1}^{n}\alpha^{-\frac{M_{j-1}}{4p}}<\frac{1}{2(p^2+2)},
\end{equation}
\begin{equation}
\mathrm{Leb}(\cV_n^-)\leq \sum_{j=0}^n M_j\varepsilon_j\leq
M_0\varepsilon_0+
\frac{4}{s}\cdot\sum_{j=1}^{n}\alpha^{-\frac{M_{j-1}}{4p}}<\frac{1}{4(p^2+2)},
\end{equation}
for $\alpha$ large and $\eps_0$ small. Thus, there must be some
interval $\mathcal{J}'\subseteq\T^1\setminus(\cV_{n}^-\cup\cW^+_{n})$.
We let $\mathcal{J}'=(a',b')$ and $\lambda=|\mathcal{J}'|>0$. Let
$\mathcal{J}=[a'+\lambda/3,\ b'-\lambda/3]$. Since $\omega$ is
irrational, there must be some $K\in\N$ such that
$\inte(\mathcal{J}+K\omega)\cap \inte(\mathcal{J})\neq\emptyset$ and
$b'-\lambda/3\in \inte(\mathcal{J}+K\omega)$. In particular, we have
$\mathcal{J}+K\omega\subseteq\mathcal{J}'$. Since $(\cY)_n$ holds,
$\mathcal I_{n+1}=\emptyset$ and $\mathcal J\cap
(\cW^+\cup\cV^-)=\emptyset$, Lemma \ref{lem-ess} implies
\begin{equation}
  f^K(\mathcal{J}\times C)\ \subseteq \ (\mathcal{J}+K\omega)\times C \ .
\end{equation}
Hence, we obtain $ f^K(\mathcal{J}\times C)\cap (\mathcal{J}\times
C)\neq\emptyset$, and thus
\begin{equation}\label{equ-21}
f^{(j+1)K}(\mathcal{J}\times C)  \cap  f^{jK}(\mathcal{J}\times
C)\neq\emptyset,\ \ j=1,2,\ldots
\end{equation}
Moreover, there exists $N>1$, such that
$\inte(\mathcal{J}+NK\omega)\cap \inte(\cal{J})\neq\emptyset$,
$a'+\lambda/3\in\inte(\mathcal{J})+NK\omega$ and $a'+\lambda/3\notin
\mathcal J+(N+1)K\omega$. Then we have
$\cup_{j=0}^{N}(\mathcal{J}+jK\omega)=\T^1$. By the same reasoning as
above, we further have that
%\begin{equation}\label{equ-c-1-1}
$f^{NK}(\mathcal{J}\times C)\subseteq (\mathcal{J}+NK\omega)\times
C$, and
%\end{equation}
\begin{equation}\label{equ-c-1-2}
f^{NK}(\mathcal{J}\times C)\cap (\mathcal{J}\times C)\neq\emptyset.
\end{equation}
Consequently, the set
\begin{equation}
\mathcal{A}:=\bigcup_{j=0}^Nf^{jK}(\mathcal{J}\times C)
\end{equation}
is connected and wraps around the torus in the horizontal direction.
In fact, if we assume $N$ to  be minimal with the above property,
$\mathcal A$ horizontally wraps around the torus exactly once. We
now claim that $f^{(N+1)K}(\mathcal J\times C)\subseteq (\mathcal
J\times C)\cup f^K(\mathcal J\times C)$, which immediately implies
\begin{equation}
f^K(\mathcal A) \ \subseteq \ \mathcal A \ .
\end{equation}
The reason is the following. Suppose $(\theta,x)\in
f^{(N+1)K}(\mathcal J\times C)$. Then since $d(K\omega, 0)<|\mathcal
J|$ and due to the choice of $N$ above, there are two possibilities.
On the one hand, we may have $\theta\in \mathcal J$. In this case,
the fact that $\mathcal J\cap (\mathcal W^+_n\cup \mathcal
V_n^-)=\emptyset$ implies, via Lemma \ref{lem-ess}, that
$(\theta,x)\in \mathcal J\times C$. On the other hand, we may have
$\theta-K\omega\in \mathcal J$. Then the same argument yields
$f^{-K}(\theta,x)\in \mathcal J\times C$, and thus $(\theta,x)\in
f^K(\mathcal J\times C)$. In both cases, we have $(\theta,x)\in
(\mathcal J\times C)\cup f^K(\mathcal J\times C)$.

Since $\mathcal W^+_n$ is a finite union of small intervals and
$\omega$ is irrational, then by Weyl's criterion,
$\{\theta_0+m\omega\}_{m\in
  \N}$ is equidistributed in $\kreis$ for all $\theta_0\in\kreis$,
which means that
\begin{equation}\label{LE}
  \lim_{m\rightarrow
    \infty}\frac{1}{m}\sum_{j=0}^{m-1}\textbf{1}_{\cW_n^+}(\theta_0+m\omega)
   =\mathrm{Leb}(\mathcal W^+_n).
\end{equation}
Let $(\theta_0,x_0)\in \mathcal J \times C$.  Using Lemma
\ref{lem-ess} in combination with (\ref{LE}), (\ref{eq:bounds1}) and
(\ref{eq:bounds3}), we obtain
$$\varlimsup \ntel\log\partial_x f^n_{\theta_0}(x_0) \ \leq \ 
  (-2/p+(2/p+p)\mathrm{Leb}(\mathcal W^+_n))\log \alpha
\ \stackrel{\eqref{esti_v_n}}{\leq} \  -\log \alpha /p \  .$$ By the
definition of $\mathcal A$, it is now easy to show that all points in
$\mathcal A$ have negative vertical Lyapunov exponents. By
\cite[Corollary 1.15]{sturman/stark:2000}, this implies that the
compact invariant set $\bigcap_{n\in \N}f^{nK}(\mathcal A)$ is the
graph of a continuous curve with negative vertical Lyapunov exponent.
Since this implies mode-locking~\cite{bjerkloev/jaeger:2009}, the
proof is complete.  \qed\medskip

One task which will frequently come up in the proof of the main
theorem is to control the geometry of small arcs whose iterates
remain in the contracting region (resp. expanding region) most of
the time. The following statements cover all these situations.
\begin{Lemma}[Forwards Iteration]\label{estimate.forward} Suppose $f_\tau$ satisfies assumptions
  (\ref{eq:Cinvariance}), (\ref{eq:bounds1}), (\ref{eq:bounds3}),
  (\ref{eq:bounddth}), (\ref{eq:s}) and the slow recurrence conditions
  $(\cX)_{n-1}$ and $(\cY)_{n-1}$ hold. Let $I\subset\T^1$ be an interval and
  $N\geq 1$. Then, if $\alpha$ is sufficiently large and $\eps_0$ is
  sufficiently small, the following are true. \smallskip

  If $\phi^1: I\rightarrow\T^1\smin\inte(E)$ is a $\cC^1$-curve and
\begin{equation}\label{con-d1}
\tag*{$(\mathcal{D}1)_n$}\left\{
\begin{array}{llll}&I\cap\cV_{n-1}^-=\emptyset,\\
&I+N\omega\subset \mathcal{I}_{n-1},\\
&(I+l\omega)\cap \mathcal{I}_n=\emptyset, \ \forall\ l=0,1,\ldots,N-1,\\
\end{array}\right.
\end{equation}
then we have
\begin{equation}\label{con-e1}
\left|\partial_\theta f^N_{\tau,\theta}
\left(\phi^1(\theta)\right)\right|\ \leq\
\sum_{l=0}^{N-1}\alpha^{-l/p}S+\alpha^{-N/p}\left|\partial_\theta\phi^1(\theta)\right|\
.\
\end{equation}
Further, we also have
\begin{itemize}
\item[$(i)$] if $I+N\omega\subset I_{n-1}^1$, then
\begin{equation}\label{equ-5}
\begin{split}
-S-\frac{S}{\alpha^{1/p}-1}-\alpha^{-\frac{N+1}{p}}&
\left|\partial_\theta\phi^1(\theta)\right|\
 \ \leq\
\partial_\theta f_{\tau,\theta}^{N+1}(\phi^1(\theta))\  \\ & \ \leq \
-s+\frac{S}{\alpha^{1/p}-1}+\alpha^{-\frac{N+1}{p}}\left|\partial_\theta\phi^1(\theta)\right|
\ ;
\end{split}
\end{equation}
\item[$(ii)$] if $I+N\omega\subset I_{n-1}^2$, then
\begin{equation}\label{equ-6}
  \begin{split} s-\frac{S}{\alpha^{1/p}-1}-\alpha^{-\frac{N+1}{p}}
  &\left|\partial_\theta\phi^1(\theta)\right|
    \ \leq \
\partial_\theta f_{\tau,\theta}^{N+1}(\phi(\theta))\   \\ & \ \leq \
S+\frac{S}{\alpha^{1/p}-1}+\alpha^{-\frac{N+1}{p}}\left|\partial_\theta\phi^1(\theta)\right|
\ .
\end{split}
\end{equation}
\end{itemize}
Moreover, if $\phi^1,\phi^2: I\rightarrow\T^1\smin\inte(E)$ are
$\cC^1$-curves and \ref{con-d1} holds, then
\begin{equation}\label{con-e2}
\left|f_{\tau,\theta}^j(\phi^1(\theta))-f_{\tau,\theta}^j(\phi^2(\theta))\right|\
\leq\  \alpha^{-j/p}\left|\phi^1(\theta)-\phi^2(\theta)\right|\
\quad \textrm{for}\ \ j=N,N+1.
\end{equation}

\end{Lemma}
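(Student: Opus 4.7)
The plan is to derive all four estimates via the chain rule, using the combinatorial Lemma~\ref{lem-ess} together with the quantitative Lemma~\ref{lem-2} to control products of fibre derivatives along the orbit of $\phi^1(\theta)$. Writing $F^j(\theta)=f^j_{\tau,\theta}(\phi^1(\theta))$, I would first unfold $\partial_\theta F^N(\theta)=\sum_{\ell=0}^{N-1}\partial_\theta f_{\tau,\theta+\ell\omega}(F^\ell(\theta))\prod_{k=\ell+1}^{N-1}\partial_x f_{\tau,\theta+k\omega}(F^k(\theta))+\partial_\theta\phi^1(\theta)\cdot\prod_{k=0}^{N-1}\partial_x f_{\tau,\theta+k\omega}(F^k(\theta))$. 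The hypothesis $\phi^1(\theta)\notin\inte(E)$ together with $I\cap\cV^-_{n-1}=\emptyset$ puts the starting point $(\theta,\phi^1(\theta))$ into the situation \ref{con-b1} of Lemma~\ref{lem-ess}, while $(I+\ell\omega)\cap\cI_n=\emptyset$ for $\ell=0,\ldots,N-1$ forces the first entry time $\mathcal{L}(\theta)$ to $\cI_n$ to satisfy $\mathcal{L}(\theta)\geq N$; since $I+N\omega\subset\cI_{n-1}$ and $\cI_n\subset\cI_{n-1}$, the integer $N$ appears as one of the intermediate return times $L_1<\cdots<L_J=\mathcal{L}$ to $\cI_{n-1}$ in Lemma~\ref{lem-2}.

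Second, applying Lemma~\ref{lem-2} with this $L_{j^*}=N$ yields $\mathcal{P}^N_m\geq\beta_n(N-m)$ for $0\leq m\leq N-1$. Combined with (\ref{eq:bounds1}) on the at most $(1-\beta_n)(N-m)$ non-contracting steps and (\ref{eq:bounds3}) on the at least $\beta_n(N-m)$ steps spent in $C$, one obtains $\prod_{k=m}^{N-1}\partial_x f_{\tau,\theta+k\omega}(F^k(\theta))\leq\alpha^{-((2/p)\beta_n-(1-\beta_n)p)(N-m)}\leq\alpha^{-(N-m)/p}$ via (\ref{e.beta}). Inserting these product bounds and $|\partial_\theta f|<S$ from (\ref{eq:bounddth}) into the chain-rule expansion and reindexing the resulting geometric sum produces (\ref{con-e1}).

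For (\ref{equ-5}) and (\ref{equ-6}) I would iterate once more: $\partial_\theta F^{N+1}=\partial_\theta f_{\tau,\theta+N\omega}(F^N(\theta))+\partial_x f_{\tau,\theta+N\omega}(F^N(\theta))\cdot\partial_\theta F^N(\theta)$. Since $\theta+N\omega\in I^\iota_{n-1}\subset I^\iota_0$, the slope condition (\ref{eq:s}) localises $\partial_\theta f_{\tau,\theta+N\omega}$ in $(-S,-s)$ when $\iota=1$ and in $(s,S)$ when $\iota=2$; and since Lemma~\ref{lem-2} also guarantees $F^N(\theta)=x_N\in C$, (\ref{eq:bounds3}) localises $\partial_x f_{\tau,\theta+N\omega}(F^N(\theta))$ in $(0,\alpha^{-2/p})$. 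Combining these with (\ref{con-e1}) and the telescoping $\alpha^{-2/p}\sum_{l=0}^{N-1}\alpha^{-l/p}\leq\sum_{l=1}^{\infty}\alpha^{-l/p}=1/(\alpha^{1/p}-1)$ produces exactly the stated two-sided bounds, the factor $\alpha^{-(N+1)/p}$ in front of $|\partial_\theta\phi^1|$ coming from $\alpha^{-2/p}\cdot\alpha^{-N/p}=\alpha^{-(N+2)/p}\leq\alpha^{-(N+1)/p}$. Finally, for (\ref{con-e2}) I would parametrise the short arc $\gamma\subset\T^1\smin\inte(E)$ from $\phi^1(\theta)$ to $\phi^2(\theta)$ and apply the fundamental theorem of calculus along $\gamma$, reducing the left-hand side to $\max_{\xi\in\gamma}|\partial_x f^j_{\tau,\theta}(\xi)|\cdot|\phi^1(\theta)-\phi^2(\theta)|$; since every $\xi\in\gamma$ still satisfies $\xi\notin\inte(E)$, Lemmas~\ref{lem-ess} and \ref{lem-2} apply to the orbit of $\xi$ and yield $|\partial_x f^j_{\tau,\theta}(\xi)|\leq\alpha^{-j/p}$ for $j=N$ directly and for $j=N+1$ by gaining an extra factor $\alpha^{-2/p}$ from the $N$-th step, which lies in $C$.

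The main obstacle is not conceptual but bookkeeping: one must verify that $N$ really does appear as an intermediate return time $L_{j^*}$ so that Lemma~\ref{lem-2} applies at the endpoint $N$ rather than only at $\mathcal{L}$, and pin down the geometric-series constants precisely enough to recover the factors $S/(\alpha^{1/p}-1)$ and $\alpha^{-(N+1)/p}$ in (\ref{equ-5}) and (\ref{equ-6}) rather than the weaker bounds that a naive estimate would yield. The conditions \ref{con-d1} have been tailored so that this identification and the application of the forward-iteration lemmas go through cleanly.
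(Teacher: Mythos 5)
Your proposal is correct and follows essentially the same route as the paper's own proof: unfold $\partial_\theta f^N_{\tau,\theta}(\phi^1(\theta))$ via the chain rule, invoke Lemma~\ref{lem-2} under \ref{con-b1} (which holds by the first line of \ref{con-d1}) to get $\mathcal{P}^N_m\geq\beta_n(N-m)$ after observing that $N$ is a return time to $\cI_{n-1}$, combine with \eqref{e.beta} to bound the fibre-derivative products by $\alpha^{-(N-m)/p}$, sum the geometric series for \eqref{con-e1}, iterate once more using $\theta_N\in I^\iota_{n-1}\subset I^\iota_0$ and $x_N\in C$ for \eqref{equ-5}–\eqref{equ-6}, and use the mean-value argument on the arc in $\T^1\smin\inte(E)$ for \eqref{con-e2}. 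The paper states the MVT step with a single intermediate point $\xi_0$ rather than an FTC supremum, but this is a cosmetic difference; your bookkeeping of the factors $S/(\alpha^{1/p}-1)$ and $\alpha^{-(N+1)/p}$ matches the paper's.
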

\begin{proof} Again, we omit the parameter $\tau$ during the proof. Moreover, we
  assume that the parameter $\alpha$ is sufficiently large, and all estimates
  below should be understood under this premise. For any $m\geq 1$, $\theta\in
  I$ and $\iota=1,2$, we let
  $\phi_m^\iota(\theta)=f_\theta^m(\phi^\iota(\theta))$.  Set
  $\theta_0:=\theta\in I$ and $x_0:=\phi^\iota(\theta_0)\notin \inte(E)$.  Then we have
\begin{equation}\label{equ-1}
\begin{split}
\partial_\theta\phi_m^\iota(\theta)& \ = \
\left(\partial_\theta
f_{\theta_{m-1}}\right)(x_{m-1})+\left(\partial_x
f_{\theta_{m-1}}\right)(x_{m-1})\cdot \partial_\theta
f_{\theta_0}^{m-1}\left(\phi^\iota(\theta_0)\right)=\cdots\\
 &\ = \ \partial_\theta
f_{\theta_{m-1}}(x_{m-1})+\sum_{l=0}^{m-2}\big(\partial_x
f_{{\theta_{l+1}}}^{m-l-1}\big)(x_{l+1})\cdot(\partial_\theta
f_{\theta_l})(x_l) \\
& \ \ +\big(\partial_x f_{\theta_0}^m\big)(x_0)\cdot\partial_\theta
\phi^\iota(\theta),
\end{split}
\end{equation}
where
\begin{equation}
\big(\partial_xf_{\theta_{l+1}}^{m-l-1}\big)(x_{l+1}) \ = \
\prod_{j=l+1}^{m-1}\big(\partial_x f_{\theta_j}\big)(x_j),\ \
l=-1,0,\ldots,m-2.
\end{equation}
Taking $m=N$ we can apply Lemma \ref{lem-2}, whose conditions hold
due to \ref{con-d1}. We thus obtain
$$\mathcal{P}_{l+1}^N \ \geq \  \beta_n(N-l-1),$$
which implies that
\begin{equation}\begin{split}\nonumber
\left|\left(\partial_xf_{\theta_{l+1}}^{N-l-1}\right)(x_{l+1})\right|
& \ \leq \ \alpha^{-\frac{2}{p}\mathcal{P}_{l+1}^N}
\alpha^{p(N-l-1-\mathcal{P}_{l+1}^N)} \\ & \ \leq \
\left(\alpha^{-\frac{2}{p}\beta_n+(1-\beta_n)p}\right)^{N-l-1}\
\stackrel{\eqref{e.beta}}{\leq} \ \alpha^{-\frac{N-l-1}{p}}\ .
\end{split}
\end{equation} As $|\partial_\theta f_{\theta_l}|\ \leq
\ S,\ \forall l$ by \eqref{eq:bounddth}, this yields the estimate
\eqref{con-e1}. Further, since
$$\left|\phi_N^1(\theta)-\phi_N^2(\theta)\right|\ = \ \left|\partial_x
  f_{\theta_0}^N(\xi_0)\right|\cdot
\left|\phi^1(\theta)-\phi^2(\theta)\right|\ \leq \
\alpha^{-N/p}\left|\phi^1(\theta)-\phi^2(\theta)\right|$$ for some
$\xi_0\notin \inte(E)$ between $\phi^1(\theta)$ and $\phi^2(\theta)$,
we also obtain \eqref{con-e2} for $j=N$ in the same way. In order to
show (\ref{con-e2}) for $j=N+1$, note that
$\left[\phi_N^1(\theta),\phi_N^2(\theta)\right]\subseteq C$ by Lemma
\ref{lem-2}. There exists
$\eta\in\left[\phi^1_N(\theta),\phi^2_N(\theta)\right]$ such that
\begin{eqnarray*}\lefteqn{
|f_{\theta_0+N\omega}(\phi_N^1(\theta))-f_{\theta_0+N\omega}(\phi_N^2(\theta))|
  \ = \ |\partial_xf_{\theta_0+N\omega}(\eta)||\phi_N^1(\theta)-\phi_N^1(\theta)|}\\
&\stackrel{\eqref{eq:bounds3}}{\leq}&\alpha^{-2/p}|\phi_N^1(\theta)-\phi_N^2(\theta)|
\ \leq \ \alpha^{-\frac{N+1}{p}}|\phi^1(\theta)-\phi^2(\theta)| \ .
\end{eqnarray*}
Finally, in order to show (i), suppose $I+N\omega\subset I_{n-1}^1$.
Then we have
$$\partial_\theta f_\theta^{N+1}(\phi^\iota(\theta))=(\partial_\theta f_{\theta_N})(x_N)+(\partial_x f_{\theta_N})(x_N)
\partial_\theta \phi_N^{\iota}(\theta).$$ Since $\theta_N\in I_{n-1}^1\subset
I_0^1$ and $x_N=\phi_N^\iota(\theta)\in C$, we obtain~\eqref{equ-5}
from \eqref{con-e1}, (\ref{eq:bounddth}) and (\ref{eq:s}), provided that $\alpha$ is
large enough. The proof of (ii) is analogous.
\end{proof}

A similar statement holds for the backwards iteration.

\begin{Lemma}[Backwards Iteration]\label{estimate.back} Suppose $f$ satisfies the
  conditions (\ref{eq:Cinvariance})-(\ref{eq:bounds2}), (\ref{eq:bounddth}),
  (\ref{eq:s}) and the slow recurrence conditions $(\mathcal{X})_{n-1}$ and
  $(\mathcal{Y})_{n-1}$ hold. Let $I\subset\T^1$ be an interval and $N\geq 1$.
  Then, if $\alpha$ is sufficiently large and $\eps_0$ is sufficiently small,
  the following are true. \smallskip

  If $\phi^1,\phi^2:I\to\kreis\smin\inte(C)$ are $\mathcal C^1$-curves and
\begin{equation}\label{con-d2}
\tag*{$(\mathcal{D}2)_n$}\left\{
\begin{array}{llll}&I\cap\cV_{n-1}^+=\emptyset,\\
&I-N\omega\subset \mathcal{I}_{n-1}+\omega,\\
&(I-l\omega)\cap (\mathcal{I}_n+\omega)=\emptyset, \ \forall\ l=0,1,\ldots,N-1,\\
\end{array}\right.
\end{equation}
then we have
\begin{equation}\label{con-e3}
\left|\partial_\theta
f_\theta^{-N}\left(\phi^\iota(\theta)\right)\right| \ \leq\
\sum_{l=1}^{N}\alpha^{-l/p}S+\alpha^{-\frac{N+1}{p}}\left|\partial_\theta\phi^\iota(\theta)\right|\
,\quad \iota=1,2,
\end{equation}
and
\begin{equation}\label{con-e4}
\left|f_\theta^{-N}\left(\phi^1(\theta)\right)-f_\theta^{-N}\left(\phi^2(\theta)\right)\right|\
\leq \ \alpha^{-N/p}\left|\phi^1(\theta)-\phi^2(\theta)\right| \ .
\end{equation}

\end{Lemma}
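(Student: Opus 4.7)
The plan is to mirror the proof of Lemma~\ref{estimate.forward} step by step, swapping the roles of the expanding and contracting regions in the expected way. Fix $\theta_0 := \theta \in I$, set $\theta_{-m} := \theta_0 - m\omega$, and let $x_{-m} := f_{\theta_0}^{-m}(\phi^\iota(\theta_0))$, so that $x_0 = \phi^\iota(\theta_0) \notin \inte(C)$. The assumption $I \cap \cV_{n-1}^+ = \emptyset$ from $(\cD 2)_n$ gives $\theta_0 \notin \cV_{n-1}^+$, which together with $x_0 \notin \inte(C)$ yields $(\cB 2)_n$, so that Lemma~\ref{lem-ess_back} becomes applicable. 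The remaining parts of $(\cD 2)_n$ translate to $\theta_{-l} \notin \cI_n+\omega$ for $l = 0, \ldots, N-1$ and $\theta_{-N} \in \cI_{n-1}+\omega$, which imply $\mathcal{R} \geq N$ and that $N$ is itself one of the visit-times $R_J$ appearing in Lemma~\ref{lem-2}. Consequently Lemma~\ref{lem-2} applied at $R_J = N$ provides both $x_{-N} \in E$ and the proportion bound $\mathcal{Q}_m^{N} \geq \beta_n (N-m)$ for all $m=0,\ldots,N-1$.

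Next I would derive an inverse chain rule analogous to~\eqref{equ-1}. Iterating the identity $\partial_\theta f_\theta^{-1}(y) = -\partial_\theta f_\theta(f_\theta^{-1}(y))/\partial_x f_\theta(f_\theta^{-1}(y))$ and writing $g_j := \partial_x f_{\theta_{-j}}(x_{-j})$, $a_j := \partial_\theta f_{\theta_{-j}}(x_{-j})$, one obtains
\[
\partial_{\theta_0} f_{\theta_0}^{-N}(\phi^\iota(\theta_0)) \ = \ \frac{\partial_{\theta_0}\phi^\iota(\theta_0)}{\prod_{j=1}^N g_j} \ - \ \sum_{k=1}^N \frac{a_k}{\prod_{j=k}^N g_j} \ .
\]
The role played in the forward case by the contracting bound~\eqref{eq:bounds3} is now taken over by the expanding bound~\eqref{eq:bounds2}: on the set of indices $j$ where $x_{-j} \in E$, which by the proportion estimate accounts for at least $\beta_n(N-k)$ indices in $[k,N-1]$ together with the endpoint $j = N$, one has $g_j > \alpha^{2/p}$, while~\eqref{eq:bounds1} supplies the crude bound $g_j > \alpha^{-p}$ on the remaining indices. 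Combining these with~\eqref{e.beta} yields
\[
\prod_{j=k}^N g_j \ \geq \ \alpha^{2/p} \cdot \alpha^{(N-k)/p} \ = \ \alpha^{(N-k+2)/p} \ ,
\]
and after reindexing the resulting geometric sum and using $|a_k| \leq S$ from~\eqref{eq:bounddth} one arrives at~\eqref{con-e3}.

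For~\eqref{con-e4}, I apply the mean value theorem at fixed $\theta$ to $y \mapsto f_\theta^{-N}(y)$ on the short arc joining $\phi^1(\theta)$ and $\phi^2(\theta)$, which lies in $\kreis \smin \inte(C)$. The derivative estimate above, applied to the backward orbit of the intermediate point $\eta$ produced by the mean value theorem, gives $|\partial_y f_\theta^{-N}(\eta)| \leq \alpha^{-(N+1)/p} \leq \alpha^{-N/p}$, from which~\eqref{con-e4} follows immediately.

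The main technical obstacle is the index bookkeeping that links the proportion estimate of Lemma~\ref{lem-2}, phrased in terms of visit-times $R_j$ of $\cI_{n-1}+\omega$ and counts $\mathcal{Q}_m^{R_j}$, to the precise product $\prod_{j=k}^N g_j$ arising from the inverse chain rule. The key observation is that $(\cD 2)_n$ forces $N$ itself to be a visit-time, namely the final one $R_J$, and that the bonus conclusion $x_{-R_J} \in E$ supplies precisely the extra factor $\alpha^{2/p}$ needed to promote the product bound from $\alpha^{(N-k)/p}$ to $\alpha^{(N-k+2)/p}$. Once this alignment is in place, the remaining estimates are entirely parallel to the forward case.
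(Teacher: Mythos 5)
Your proposal is correct and follows essentially the same route as the paper: the same inverse chain rule expansion (your $\prod_{j=k}^N g_j$ is exactly the paper's $\prod_{j=l}^{N-1}\partial_x f_{\theta_j}^{-1}(x_j)$ after the index shift $k=l+1$), the same use of Lemma~\ref{lem-2} to bound the good-index proportion, the same exploitation of the bonus fact $x_{-N}\in E$ to pick up the extra factor $\alpha^{2/p}$, and the same mean-value argument for~\eqref{con-e4}. The only blemish is notational: you call $N$ the visit-time $R_J$, but the conditions in $(\cD 2)_n$ only guarantee that $N$ is \emph{some} $R_j$ with $j\leq J$ (it equals $R_J=\mathcal{R}$ only if $\theta_{-N}\in\cI_n+\omega$), which does not affect the argument since Lemma~\ref{lem-2} gives the needed bounds and the membership $x_{-R_j}\in E$ for every $j$.
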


\begin{proof}
  As before, we omit $\tau$. For $\iota=1,2$, let
  $\phi_{-N}^\iota(\theta)=f_\theta^{-N}\left(\phi^\iota(\theta)\right),\
  \theta\in I$. Further, let $\theta_0:=\theta\in I$ and
  $x_0:=\phi^\iota(\theta_0)\notin \inte(C)$. We proceed in a similar way as in
  Lemma~\ref{estimate.forward}, but this time consider the map $f^{-1}$ instead
  of $f$. Thus, we write $\theta_l=\theta_0-l\omega$ and
  $x_l=f_{\theta_0}^{-l}(x_0).$ First, note that
\begin{equation}
  \partial_xf_\theta^{-1}(x)\ = \ \frac{1}{(\partial_xf_{\theta-\omega})(f_\theta^{-1}(x))} \ \in
  \ (\alpha^{-p},\alpha^{-2/p}) \quad \textrm{ if } f^{-1}_\theta(x)\in E
\end{equation}
by \eqref{eq:bounds1}, \eqref{eq:bounds2} and
\begin{equation}
\partial_\theta f_\theta^{-1}(x)=-\frac{(\partial_\theta
f_{\theta-\omega})(f_\theta^{-1}(x))}{(\partial_xf_{\theta-\omega})(f_\theta^{-1}(x))}
\ .
\end{equation}
Similarly to $(\ref{equ-1})$, we have
\begin{eqnarray*}
\partial_\theta\phi_{-N}^\iota(\theta)\ =\ \big(\partial_x
f_{\theta_0}^{-N}\big)(x_0)\cdot\partial_\theta
\phi^\iota(\theta_0)+\sum_{l=0}^{N-1}\big(\partial_x
f_{\theta_{l+1}}^{-(N-l-1)}\big)(x_{l+1})\cdot(\partial_\theta
f_{\theta_l}^{-1})(x_l).
\end{eqnarray*}
 Since condition \ref{con-d2} holds, Lemma~\ref{lem-2} yields
$$\mathcal{Q}_{l+1}^N\geq \beta_n(N-l-1),\ \ l=-1\ld N-1.$$
Then
\begin{eqnarray*}
\lefteqn{\left|\left(\partial_x
f_{\theta_{l+1}}^{-(N-l-1)}\right)(x_{l+1})\left(\partial_\theta
f_{\theta_l}^{-1}\right)(x_l)\right|\ = \
\left|\prod_{j=l+1}^{N-1}\left(\partial_x
f_{\theta_j}^{-1}\right)(x_j)\frac{\left(\partial_\theta
f_{\theta_{l+1}}\right)(x_{l+1})}{\left(\partial_x
f_{\theta_{l+1}}\right)(x_{l+1})}\right|} \\
&=&\left|\prod_{j=l}^{N-1}\left(\partial_x
f_{\theta_j}^{-1}\right)(x_j)\cdot\left(\partial_\theta
f_{\theta_{l+1}}\right)(x_{l+1})\right|\ \leq \
\alpha^{-\frac{2}{p}}\alpha^{-\frac{2}{p}\mathcal Q_{l+1}^N}
\alpha^{p(N-l-1-\mathcal Q_{l+1}^N)}S \\
&\leq&\alpha^{-\frac{2}{p}}\left(\alpha^{-\frac{2}{p}\beta_n+(1-\beta_n)p}\right)^{N-l-1}S
\ \leq\  \alpha^{-\frac{N-l}{p}}S
\end{eqnarray*}
for $l=-1,0,\ldots,N-1$. This implies \eqref{con-e3}. The estimate
\eqref{con-e4} is obtained in a similar way as \eqref{con-e2}.
\end{proof}

\begin{rem}  \label{esti-comm}
  By equality (\ref{equ-1}), for any $\mathcal C^1$-curves $\phi^1,\phi^2$
  defined on an interval $I\subset \T^1$, $m\geq 1$, we obtain
  that \begin{equation}\label{equ-8} |\partial_\theta
    f_\theta^m(\phi^\iota(\theta))|\ \leq\
    \sum_{l=0}^{m-1}\alpha^{pl}S+\alpha^{pm}|\partial_\theta\phi^\iota(\theta)|\
    ,\quad \iota=1,2,
\end{equation}
and
\begin{equation}\label{equ-7}
\alpha^{-pm}\left|\phi^{1}(\theta)-\phi^{2}(\theta)\right|\ \leq \
\left|f_\theta^m\left(\phi^{1}(\theta)\right)-f_\theta^m\left(\phi^{2}(\theta)\right)\right|\
\leq \ \alpha^{pm} \left|\phi^{1}(\theta)-\phi^{2}(\theta)\right| \
,
\end{equation}
provided $f$ satisfies conditions (\ref{eq:bounds1}) and
(\ref{eq:bounddth}).
\end{rem}

\section{Geometric estimates and the proof of
  Theorem~\ref{t.modelocking_quantitative}} \label{GeometricEstimates}

In this section, we collect the key technical lemmas about the
geometry of the intersections shown in Figure~\ref{Fig.SNA} and show
how this information can be combined to prove
Theorem~\ref{t.modelocking_quantitative}. The proofs of the lemmas
will then be given in Section~\ref{MainProof}.

Recall that our main aim is to render the critical set $\cC_n$ empty
by shifting the parameter $\tau\in\Lambda^{\hat f}$. As mentioned in
Section~\ref{ProofOutline}, the first step is to create a fast return
of $\cI_n$ to itself. Thereby, it will be important to ensure that the
following condition, which is an itermediate between $(\cY)_n$ and
$(\cY')_n$, still holds.
\begin{equation}
  \label{eq:Y''} \tag*{$(\cY'')_{n}$} d\left((\cI_j-(M_j-1)\omega)
    \cup(\cI_j+(M_j+1)\omega),\cY_{j-1}\right) \ > \ \eps_{j-1} \quad \forall j=1\ld n .
  \end{equation}
  \begin{lem} \label{l.perturbation_closereturn} Let $\hat f$ satisfy the
    assertions of Theorem~\ref{t.Main_qualitative}, assume that
    $\tau_0\in\Lambda^{\hat f}$ and fix the corresponding sequences $M_n$ and
    $\varepsilon_n$. Then for all $\zeta>0$ there exist integers $n\in\N$, $k\in
    [2K_{n-1}M_{n-1}+1,\ M_{n-1}^{4q(\nu+1)}]$ and an interval
    $\Gamma=[\tau^-,\tau^+]\ssq B_\zeta(\tau_0)$ such that for all
    $\tau\in\Gamma$ the following hold.  \romanlist
  \item Conditions $(\cX)_{n-1}$ and $(\cY'')_n$ are satisfied.
\item The intervals $I^1_n+k\omega$ and $I^2_n$ have distance no
more
 than $4\eps_n$.
\item At $\tau=\tau^-$, the interval $I_n^1+k\omega$ is to the left
  of $I_n^2$, whereas at $\tau=\tau^+$ it is to the right (in a local
  sense).  \listend
\end{lem}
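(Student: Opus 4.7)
The plan is to combine the quantitative $\mathcal{C}^1$-dependence of the critical intervals on $\tau$ from Proposition~\ref{finite_times} with an inhomogeneous Diophantine approximation step for $\omega$. Proposition~\ref{finite_times} provides that the endpoints of $I_n^\iota(\tau)=(a_n^\iota(\tau),b_n^\iota(\tau))$ are $\mathcal{C}^1$ in $\tau$, each moves at absolute speed at most $2L/s$ by \eqref{est_partial}, and the relative separation of $I_n^1$ and $I_n^2$ changes with speed at least $\ell/S$ by \eqref{est_relative}. At $\tau_0\in\Lambda^{\hat f}$ the strong conditions $(\mathcal{X}')_{n-1}$ and $(\mathcal{Y}')_n$ of Theorem~\ref{t.Main_qualitative} provide cushions of $6\varepsilon_j$ and $\varepsilon_{j-1}$ above the thresholds of $(\mathcal{X})_{n-1}$ and $(\mathcal{Y}'')_n$. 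Finally, $\omega\in\mathcal{D}(\gamma,\nu)$ yields inhomogeneous Diophantine approximation of any target by some $k\omega$ with $k$ in a prescribed window.

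\textbf{Choice of $n$ and $k$.} Given $\zeta>0$, I fix $n\in\N$ so large that $\varepsilon_n S/\ell<\zeta<\varepsilon_{n-1} s/(4L)$ --- feasible for all sufficiently large $n$ thanks to the super-exponential gap between consecutive $\varepsilon_j$ recorded in \eqref{e.eps_j}. I set $N:=M_{n-1}^{4q(\nu+1)}$, let $x^*\in\kreis$ be the displacement between the midpoints of $I_n^2(\tau_0)$ and $I_n^1(\tau_0)$, and invoke a standard Dirichlet / three-distance argument under $\omega\in\mathcal{D}(\gamma,\nu)$: some $k\in[2K_{n-1}M_{n-1}+1,N]$ satisfies $d(k\omega,x^*)\lesssim N^{-1/\nu}$. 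The removal of the initial segment $[1,2K_{n-1}M_{n-1}]$ is harmless because $K_{n-1}M_{n-1}\ll N$ for $n$ large, and for $n$ large we further have $N^{-1/\nu}<\varepsilon_n S/(2\ell)$.

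\textbf{Construction of $\Gamma$ and verification of (ii), (iii).} I now define $\Phi(\tau):=\tfrac{1}{2}(a_n^1(\tau)+b_n^1(\tau))+k\omega-\tfrac{1}{2}(a_n^2(\tau)+b_n^2(\tau))$, the signed gap between the midpoints of $I_n^1(\tau)+k\omega$ and $I_n^2(\tau)$, read on a local lift. By \eqref{est_relative}, $\partial_\tau\Phi\geq\ell/S>0$, so $\Phi$ is strictly monotone, and since $|\Phi(\tau_0)|\lesssim N^{-1/\nu}<\varepsilon_n$, there is a unique $\tilde\tau$ close to $\tau_0$ with $\Phi(\tilde\tau)=0$. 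I set $\tau^\pm:=\tilde\tau\pm 3\varepsilon_n S/\ell$ and $\Gamma:=[\tau^-,\tau^+]$; by monotonicity, $\Phi(\tau^-)\leq -3\varepsilon_n$ and $\Phi(\tau^+)\geq 3\varepsilon_n$, while $|\Phi(\tau)|\leq 3\varepsilon_n$ throughout $\Gamma$. Combined with $|I_n^\iota|\leq\varepsilon_n$ from \eqref{est_interval}, this gives (iii) (the intervals sit strictly on opposite sides at $\tau^\pm$, with a gap $\geq 2\varepsilon_n$) and (ii) (distance between the intervals is at most $|\Phi(\tau)|+\varepsilon_n\leq 4\varepsilon_n$). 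The choice of $\zeta>\varepsilon_n S/\ell$ together with $|\tilde\tau-\tau_0|$ tiny ensures $\Gamma\subseteq B_\zeta(\tau_0)$.

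\textbf{Verification of (i) and the main obstacle.} For $\tau\in\Gamma\subseteq B_\zeta(\tau_0)$, bound \eqref{est_partial} implies every endpoint of every $I_j^\iota$ with $j\leq n$ lies within $2L\zeta/s$ of its $\tau_0$-position; hence the distances between $\mathcal{I}_j$ and any translate $\mathcal{I}_j+l\omega$, as well as between $\mathcal{I}_j\pm(M_j\mp1)\omega$ and $\mathcal{Y}_{j-1}$, change by at most $4L\zeta/s<\varepsilon_{n-1}\leq 6\varepsilon_j$ for $j\leq n-1$ (resp.\ $\leq \varepsilon_{j-1}$ for $j\leq n$), well below the cushions present in $(\mathcal{X}')_{n-1}$ and $(\mathcal{Y}')_n$ at $\tau_0$. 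Therefore $(\mathcal{X})_{n-1}$ and $(\mathcal{Y}'')_n$ persist throughout $\Gamma$, completing (i). The main obstacle is the triple balance of scales: $\zeta$ must be large enough to accommodate both the Diophantine error $\sim N^{-1/\nu}$ and a transversal crossing window of width $\sim\varepsilon_n S/\ell$, yet small enough that the $\tau_0$-cushions survive (forcing $\zeta\ll\varepsilon_{n-1}$). This is possible precisely because $\varepsilon_n\ll\varepsilon_{n-1}$ super-exponentially, and because the exponent $4q(\nu+1)$ in $N=M_{n-1}^{4q(\nu+1)}$ is calibrated so that $N^{-1/\nu}\ll\varepsilon_{n-1}$ as well.
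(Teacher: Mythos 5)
Your overall strategy (Diophantine step to locate $k$, then use the monotone $\tau$-dependence from \eqref{est_relative} to sweep $I_n^1+k\omega$ past $I_n^2$, then take a small interval $\Gamma$ around the crossing) is exactly the paper's. However, there are concrete quantitative errors in the scale bookkeeping that need fixing.

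First, the double inequality $\varepsilon_n S/\ell<\zeta<\varepsilon_{n-1}s/(4L)$ is \emph{not} ``feasible for all sufficiently large $n$'': the right side tends to $0$, so for $n$ large it fails. Worse, for a given $\zeta$ there may be \emph{no} $n$ satisfying both sides, since the smallest $n$ with $\varepsilon_n<\zeta\ell/S$ need not have $\varepsilon_{n-1}>4L\zeta/s$. The paper only requires the one-sided condition $\frac{s}{4L}\varepsilon_{n-1}<\zeta$, which is achievable for all sufficiently large $n$ and is all that is needed, because the whole parameter window $\Gamma$ will sit inside $[\tau_0,\tau_0+\frac{s}{4L}\varepsilon_{n-1}]\subseteq B_\zeta(\tau_0)$.

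Second, the claimed accuracy of the Diophantine step is unattainable. With $k\leq N=M_{n-1}^{4q(\nu+1)}$ you can approximate a target to within roughly $N^{-1/(\nu+1)}=M_{n-1}^{-4q}$, which is only \emph{polynomially} small in $M_{n-1}$, whereas $\varepsilon_n\sim\alpha^{-M_{n-1}/p}$ is exponentially small in $M_{n-1}$. So your inequality $N^{-1/\nu}<\varepsilon_n S/(2\ell)$ fails badly for large $n$. The paper only establishes $d\bigl(I_n^1(\tau_0)+k\omega,I_n^2(\tau_0)\bigr)<\frac{s\ell}{8LS}\varepsilon_{n-1}$ (note $\varepsilon_{n-1}$, not $\varepsilon_n$), by taking the Dirichlet scale $N_{\mathrm{paper}}\sim\alpha^{M_{n-2}/p}$ so that $1/N_{\mathrm{paper}}\lesssim\varepsilon_{n-1}$; the resulting $k\leq 2\gamma^{-1}N_{\mathrm{paper}}^{\nu+1}\leq M_{n-1}^{4q(\nu+1)}$ via \eqref{e.M_j}.

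As a consequence, your crossing point $\tilde\tau$ is at distance $\sim\varepsilon_{n-1}S/\ell$ from $\tau_0$, not ``tiny'' on the $\varepsilon_n$-scale, and the containment $\Gamma\subseteq B_\zeta(\tau_0)$ therefore requires $\zeta\gtrsim\varepsilon_{n-1}$ (consistent with the paper), not merely $\zeta\gtrsim\varepsilon_n$. In the verification of (i) you should then bound the parameter displacement by the actual diameter of $[\tau_0,\tau_0+\frac{s}{4L}\varepsilon_{n-1}]$ rather than by $\zeta$: this gives the endpoint movement $\leq\frac{2L}{s}\cdot\frac{s}{4L}\varepsilon_{n-1}=\frac{1}{2}\varepsilon_{n-1}$, which is then compared against the cushions $6\varepsilon_j$ (for $(\cX')_j\Rightarrow(\cX)_j$) and $\varepsilon_{j-1}$ (for $(\cY')_j\Rightarrow(\cY'')_j$) by an induction on $j$, exactly as in the paper. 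With those corrections your construction of $\Phi$, the intermediate value argument for $\tilde\tau$, and the choice of $\Gamma$ of width $\sim\varepsilon_n S/\ell$ go through as intended.
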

\begin{rem}
  Note that due to the assumptions on the sequences $K_n$ and $M_n$ in
  Theorem~\ref{t.Main_qualitative}, we have $M_{n-1}\ll k \ll M_n$ if
  $\alpha$ and $n$ are large.
\end{rem}

For any $\tau\in\Gamma$, we define $J=J(\tau)$ by
\begin{equation}
  \label{e.J_definition}
  J \ = \ \closure\left(B_{4\eps_n}(I_n^1+k\omega)\cup B_{4\eps_n}(I_n^2)\right) \ .
\end{equation}
Note that due to statement (ii) in
Lemma~\ref{l.perturbation_closereturn}, $J$ is always an interval.
We further let
\begin{equation}
  \label{e.A'_B'}
  \cA' \ = \ (J-(M_n-1)\omega)\times C \eqand \cB' \ =
  \ (J+(M_n-k+1)\omega)\times E \ .
\end{equation}
The overall strategy from now on is illustrated and outlined
Figure~\ref{f.stragegy}. 
 \begin{figure}[ht!] 
 \epsfig{file=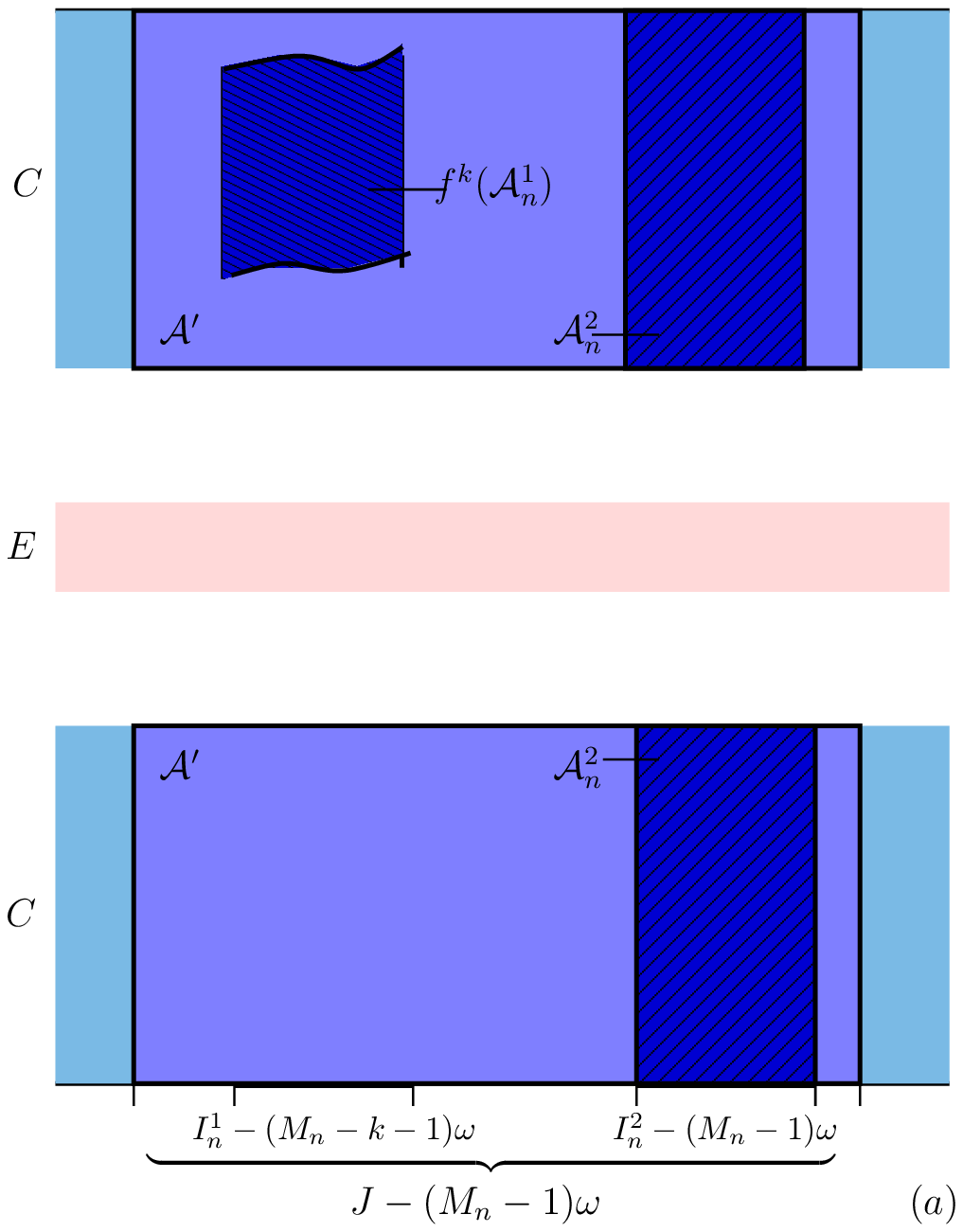,
 clip=,width=0.47\linewidth} \hspace{3eM}
 \epsfig{file=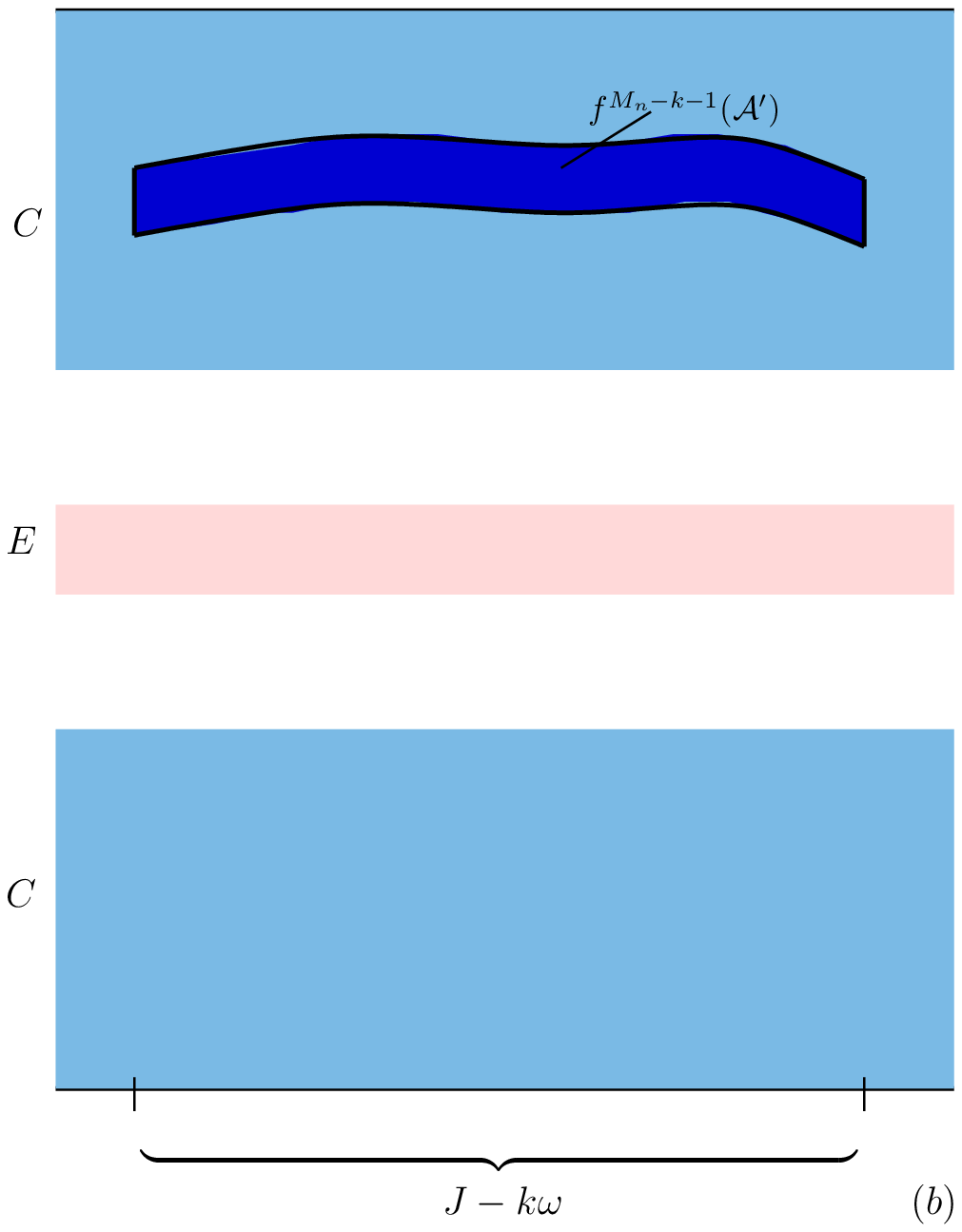, clip=,width=0.47\linewidth}
\vspace{2ex}

\epsfig{file=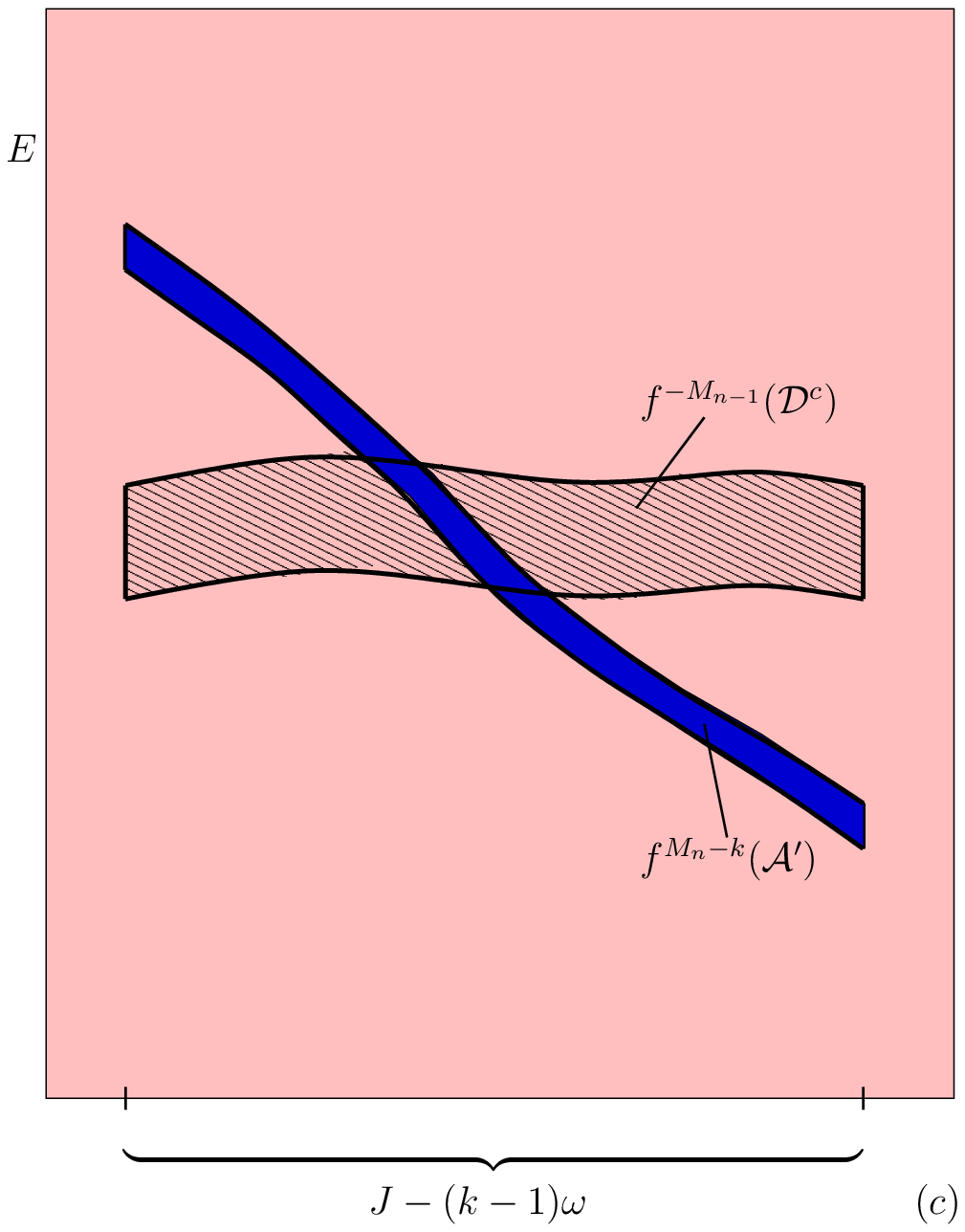,
clip=,width=0.47\linewidth}\hspace{3eM}
 \epsfig{file=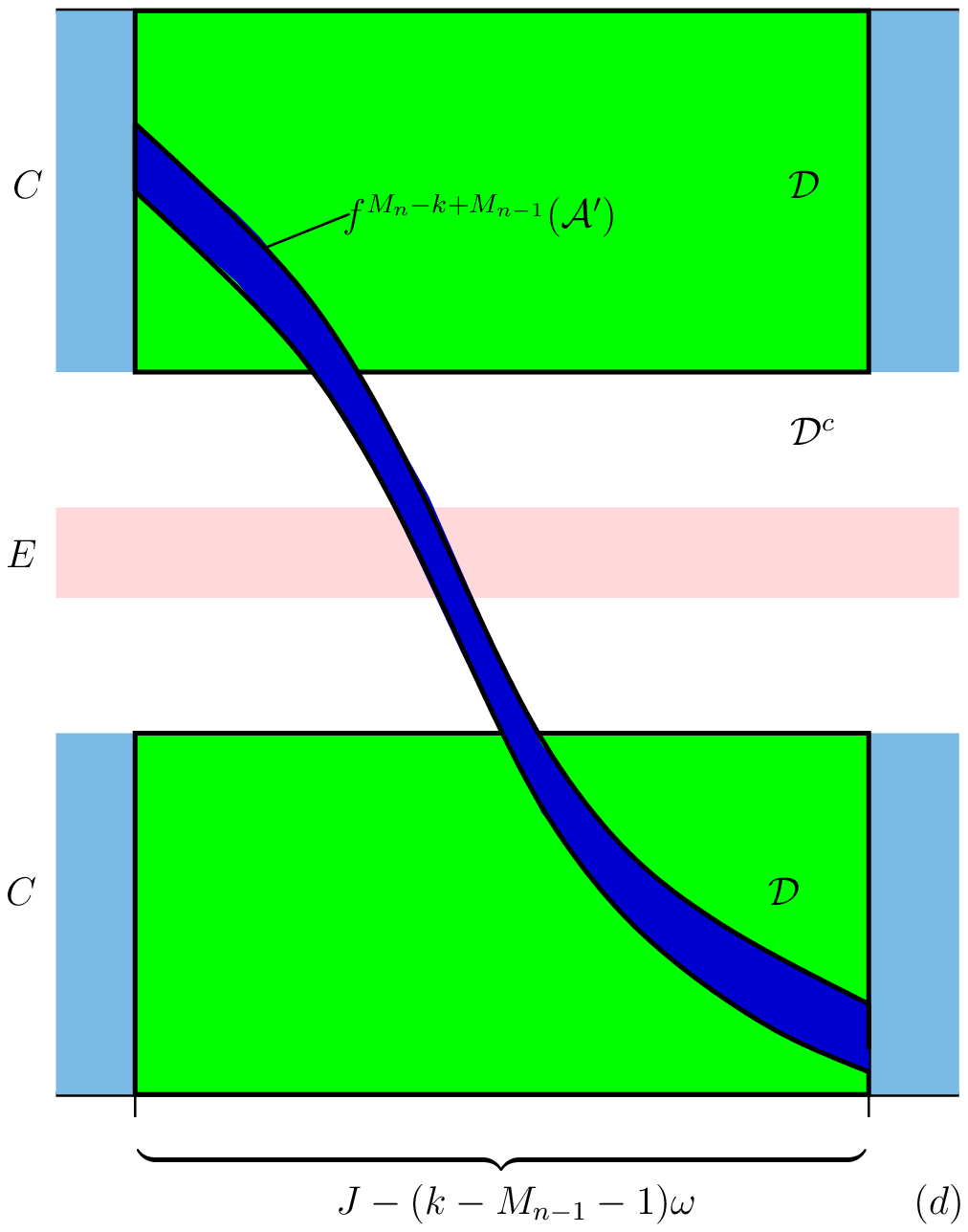, clip=,width=0.47\linewidth}
 \vspace{2ex}

 \caption{\small Strategy for the proof of
   Theorem~\ref{t.modelocking_quantitative}: The different steps in
   the forward iteration of $\cA'$, explaining the creation of the two
   hooks in Figure~\ref{Fig.SNA}.  (a) It suffices to consider the
   sets $\cA'$ and $\cB'$ defined in (\ref{e.A'_B'}), since $\cA'$
   contains $\cA^2_n\cup f^k(\cA^1_n)$, and similarly $\cB'$ contains
   $\cB^1_n\cup f^{-k}(\cB^2_n)$ (Lemma~\ref{l.inclusions}). (b) After
   $M_n-k-1$ iterates, the image of $\cA'$ is a thin horizontal strip
   in the contracting region $\kreis\times C$. (c) In the next step,
   it is mapped into the expanding region $\kreis\times E$ with
   negative slope. Therefore it intersects the preimage of the
   complement of $\cD$ under $f^{M_{n-1}}$ in a transveral way. (d)
   After $M_{n-1}$ further steps, the image of $\cA$ is mostly
   contained in $\cD$, but transverses the expanding region in a small
   interval. Continued in Figure~\ref{f.stragegy2} \ldots\
   .\label{f.stragegy}}
\end{figure}

The following lemma ensures that it is sufficient to consider $\cA'$
and $\cB'$ (instead of the four sets $\cA^1_n,\cA^2_n,\cB^1_n$ and
$\cB^2_n$).
\begin{lem} \label{l.inclusions}
  For all $\tau\in\Gamma$, the following inclusions hold.
  \begin{eqnarray}
    f^{M_n}(\cA^2_n) \cap f^{-M_n}(\cB^2_n) & \ssq & f^{M_n}(\cA')\cap f^{-(M_n-k)}(\cB')\\
    f^k\left(f^{M_n}(\cA^1_n) \cap f^{-M_n}(\cB^1_n)\right) &
    \ssq & f^{M_n}(\cA')\cap f^{-(M_n-k)}(\cB')
  \end{eqnarray}
\end{lem}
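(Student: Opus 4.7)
I would reduce each of the two inclusions to a pair of simpler set-theoretic statements, proving the $\theta$-level inclusions directly from the definition of $J$ in~\eqref{e.J_definition} and deferring the $x$-level to the iteration lemmas. Since $f^k$ commutes with intersections, the second inclusion is equivalent to $f^{M_n}(f^k(\cA^1_n)) \cap f^{-(M_n-k)}(\cB^1_n) \ssq f^{M_n}(\cA') \cap f^{-(M_n-k)}(\cB')$. Consequently it suffices to establish the four inclusions
\begin{equation*}
\mathrm{(a)}\ \cA^2_n \ssq \cA', \quad \mathrm{(b)}\ f^{-k}(\cB^2_n) \ssq \cB', \quad \mathrm{(c)}\ f^k(\cA^1_n) \ssq \cA', \quad \mathrm{(d)}\ \cB^1_n \ssq \cB'.
\end{equation*}

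The purely $\theta$-level inclusions (a) and (d) are immediate from~\eqref{e.J_definition}. For (a), $\cA^2_n = (I^2_n - (M_n-1)\omega) \times C$ and $\cA' = (J - (M_n-1)\omega) \times C$, with $J \supseteq B_{4\eps_n}(I^2_n) \supseteq I^2_n$. For (d), $\cB^1_n = (I^1_n + k\omega + (M_n-k+1)\omega) \times E$ and $\cB' = (J + (M_n-k+1)\omega) \times E$, with $J \supseteq B_{4\eps_n}(I^1_n + k\omega) \supseteq I^1_n + k\omega$.

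For inclusion (b) I would pick $(\theta_0,x_0) \in \cB^2_n$ and inspect $f^{-k}(\theta_0,x_0) = (\theta_{-k},x_{-k})$ coordinate by coordinate. The $\theta$-coordinate satisfies $\theta_{-k} = \theta_0 - k\omega \in I^2_n + (M_n-k+1)\omega \ssq J + (M_n-k+1)\omega$, so the defining condition of $\cB'$ in the first coordinate is met. For the $x$-coordinate I apply Lemma~\ref{lem-ess_back} at level $n$: its hypotheses are verified as follows: $x_0 \in E$ implies $x_0 \notin \inte(C)$, and since $\cV^+_{n-1} \ssq \cY_{n-1}$, the slow-recurrence condition $(\cY'')_n$ places $\theta_0 \in \cI_n + (M_n+1)\omega$ at distance more than $\eps_{n-1}$ from $\cV^+_{n-1}$. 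The dual statement (c) is verified in the completely symmetric way using Lemma~\ref{lem-ess}.

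The main obstacle is the combinatorial bookkeeping required to run the iteration lemmas all the way out to time $k$: one must verify that the first-return times $\cR,\cL$ appearing in those lemmas satisfy $\cR,\cL > k$, and that $\theta_{\pm k} \notin \cW^\mp_{n-1}$. The first point amounts to ruling out that $I^2_n + (M_n-m)\omega$ meets $\cI_n$ for $1 \leq m \leq k-1$ (and its symmetric analogue for the forward case). The same-component case $I^2_n + (M_n-m)\omega \cap I^2_n = \emptyset$ follows directly from the Diophantine inequality $\|N\omega\| > \gamma N^{-\nu}$ at $N = M_n - m \leq M_n$, since the choice $q \geq 4\nu$ in Theorem~\ref{t.Main_qualitative} ensures $\gamma(2M_n)^{-\nu} \gg \eps_n$ once $\alpha$ is large. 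The cross-component case exploits the close-return equation $c(I^2_n) - c(I^1_n) \equiv k\omega \bmod 1$ up to $O(\eps_n)$, which comes from Lemma~\ref{l.perturbation_closereturn}(ii) and converts a potential coincidence $I^1_n \cap (I^2_n + (M_n-m)\omega) \neq \emptyset$ into $\|(M_n - m + k)\omega\| = O(\eps_n)$, ruled out by the Diophantine bound at $N = M_n - m + k \leq 2M_n$. The avoidance conditions on $\theta_{\pm k}$ follow from the same Diophantine estimate together with $(\cY'')_n$.
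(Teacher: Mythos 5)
Your proposal is correct and follows essentially the same route as the paper: the same reduction to the four inclusions (a)--(d), with (a) and (d) immediate from the definition of $J$ in~\eqref{e.J_definition}, and (b), (c) handled by Lemmas~\ref{lem-ess_back} and~\ref{lem-ess}; the combinatorial obstacles you flag (that the first-return times in those lemmas exceed $k$, and the $\cW^{\mp}_{n-1}$-avoidance for $\theta_{\mp k}$) are exactly what the paper isolates as Claim~\ref{l.interval_disjoint} and the two short estimates following it. One small imprecision in your last sentence: the $\cW^{\mp}_{n-1}$-avoidance of $\theta_{\mp k}$ does not come from another Diophantine bound but from the close-return inequality together with $(\cY'')_n$. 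Indeed $(\cY'')_n$ directly controls $\cI_n+(M_n+1)\omega$ and $\cI_n-(M_n-1)\omega$, whereas $\theta_{-k}$ lies in $I^2_n+(M_n-k+1)\omega$ and $\theta_{k}$ in $I^1_n-(M_n-k-1)\omega$; it is the close-return estimate $d(I^1_n+k\omega,I^2_n)\le 4\eps_n\ll\eps_{n-1}$ that identifies these sets (up to an error negligible against $\eps_{n-1}$) with the ones $(\cY'')_n$ speaks about, which is how the paper closes the gap.
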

Hence, in order to apply Proposition~\ref{p.modelocking_criterion}
it will be sufficient to show that $f^{M_n}(\cA')\cap
f^{-(M_n-k)}(\cB')=\emptyset$, since in this case both components of
$\cC_{n}$ are empty.
 
 \begin{figure}[b!] 
 \epsfig{file=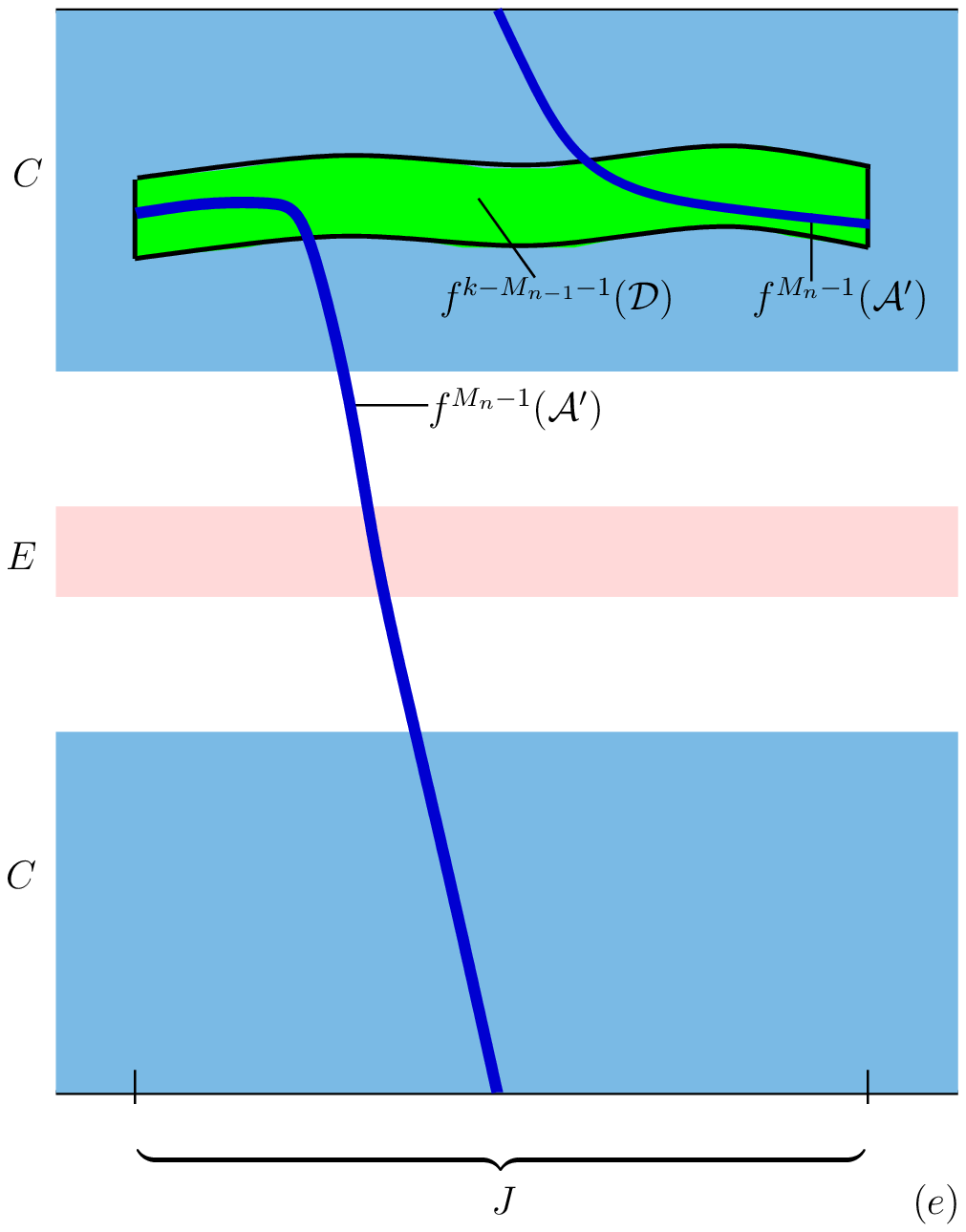,
 clip=,width=0.47\linewidth}\hspace{3eM}
 \epsfig{file=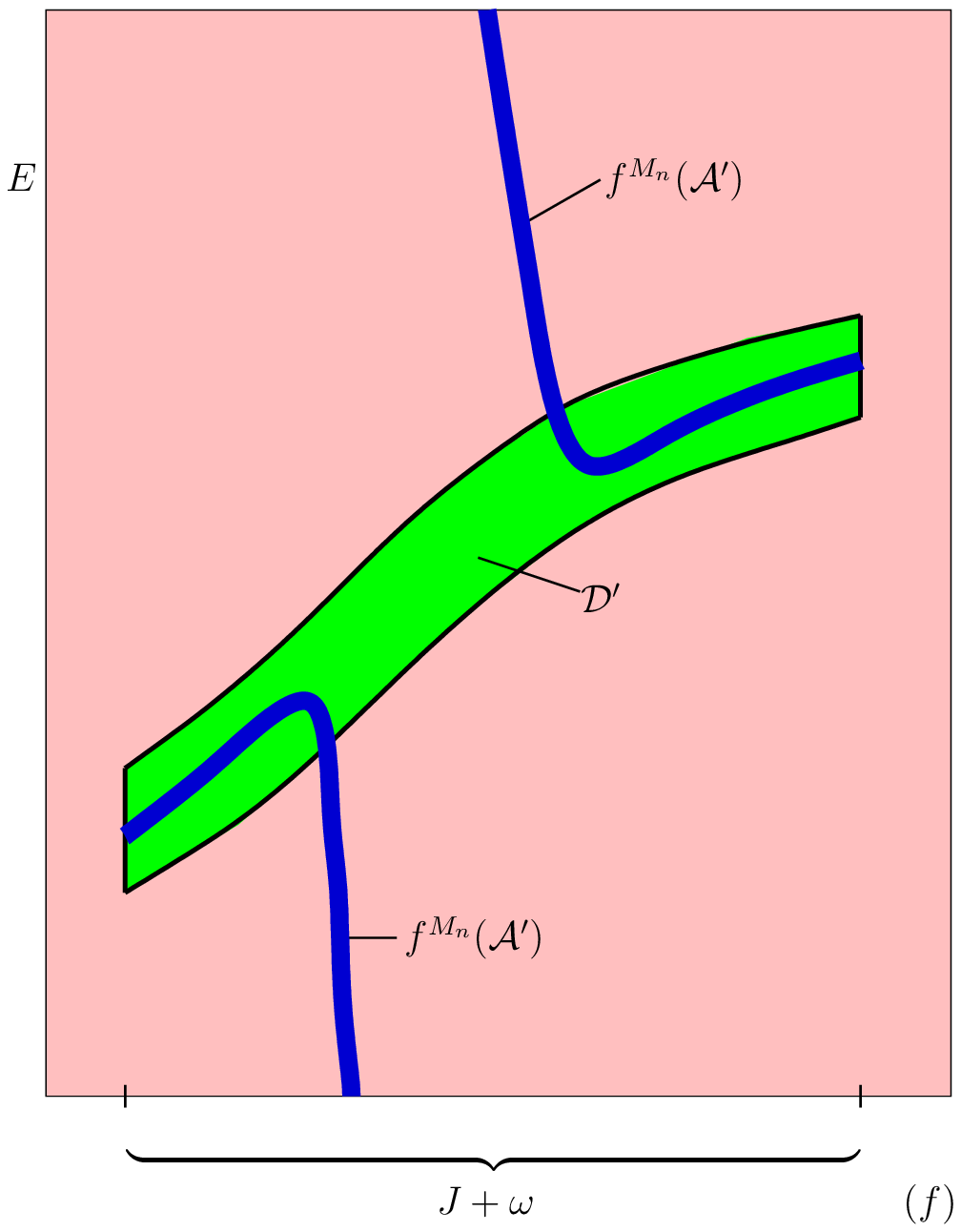, clip=,width=0.47\linewidth}
 \vspace{2ex}
 \caption{\small Strategy for the proof of
   Theorem~\ref{t.modelocking_quantitative}: (e) After $k-M_{n-1}-1$ more iterates the set
   $\cD$ gets mapped to a thin horizontal strip in the contracting region. (f)
   In the next step, it gets mapped into the expanding region with positive
   slope (Lemma~\ref{l.D_shape}). Due to the relative position of the two sets,
   this forces the image of $\cA'$ to develop the two hooks already mentioned in
   Figure~\ref{Fig.SNA}(b) and (c) (Lemma~\ref{l.P_0}). \label{f.stragegy2}}
\end{figure}

Next, it will be important to control the geometry of the sets $f^{M_n}(\cA')$
and $f^{-(M_n-k)}(\cB')$. To that end, we introduce the
following notation.  If $I\ssq\kreis$ is an interval and $A\ssq\kreis$, we
denote by $\sup^I A$ and $\inf^I A$ the supremum, respectively infimum, of $A$
with respect to the natural ordering on $I$, induced by the counter-clockwise
orientation on \kreis. Note that thus $\inf^I I$ and $\sup^I I$ are the left and
right endpoints of $I$.  Given $A\ssq\T^2$, $\theta\in \pi_1(A)$, we let
$A_\theta=\{x\in\kreis\mid (\theta,x)\in A\}$. If $A_\theta$ is an interval for
all $\theta\in \pi_1(A)$, we define the {\em boundary graphs} of $A$ as
\begin{eqnarray*}
  \varphi^+_A : \pi_1(A) \to \kreis & , & \varphi^+_A(\theta)\ = \ \textstyle\sup^{A_\theta}A_\theta \\
  \varphi^-_A : \pi_1(A) \to \kreis & , & \varphi^-_A(\theta) \ = \ \textstyle\inf^{A_{\theta}} A_\theta \ .
\end{eqnarray*}
With these notions, we have
\begin{lem} \label{l.preimage_shape} For all $\tau\in\Gamma$, the set
  $\cB''=\closure(f_\tau^{-(M_n-k)}(\cB'))$ is included in
  $(J+\omega)\times E$ and satisfies the following.  \romanlist
\item $|\cB''_\theta|\  \leq \ |E|\cdot\alpha^{-\frac{M_n-k}{p}}$ for all $\theta\in J+\omega$;
\item $|\partial_\theta\varphi^\pm_{\cB''}(\theta)|\  \leq\
  \frac{S}{\alpha^{1/p}-1}$ for all $\theta\in J+\omega$.  \listend
\end{lem}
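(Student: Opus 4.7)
The plan is to view $\cB''$ as the $(M_n-k)$-th backwards iterate of the horizontal strip $\cB'=(J+(M_n-k+1)\omega)\times E$. Since each fibre map $f_\theta^{M_n-k}$ is an orientation-preserving diffeomorphism of $\kreis$, the fibre of $\cB''$ at $\theta\in J+\omega$ is the single arc $(f_\theta^{M_n-k})^{-1}(E)$, with endpoints $f_\theta^{-(M_n-k)}(e^\pm)$. Consequently, the boundary graphs are
\begin{equation*}
\varphi^+_{\cB''}(\theta)=f_\theta^{-(M_n-k)}(e^+),\qquad\varphi^-_{\cB''}(\theta)=f_\theta^{-(M_n-k)}(e^-),
\end{equation*}
and both conclusions (i) and (ii) will be obtained by applying Lemma~\ref{estimate.back} to the pair of \emph{constant} curves $\phi^1(\theta)\equiv e^+$, $\phi^2(\theta)\equiv e^-$ on $I:=J+(M_n-k+1)\omega$ with $N:=M_n-k$.

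The preparatory step is to verify condition $(\cD 2)_n$ for this choice of $I$ and $N$. The three requirements become (a) $I\cap\cV_{n-1}^+=\emptyset$; (b) $I-N\omega=J+\omega\ssq\cI_{n-1}+\omega$, i.e.\ $J\ssq\cI_{n-1}$; and (c) $(J+l\omega)\cap(\cI_n+\omega)=\emptyset$ for $l=2,\ldots,M_n-k+1$. For (b), note that $J$ is a $4\eps_n$-neighborhood of $I^2_n\cup(I^1_n+k\omega)$: the interval $I^2_n$ lies in $\cI_n\ssq\cI_{n-1}$ by nestedness of the critical regions (property~(i) of the multiscale scheme), while $I^1_n+k\omega$ lies within distance $4\eps_n$ of $I^2_n$ by Lemma~\ref{l.perturbation_closereturn}(ii); since $\eps_n\ll\eps_{n-1}$, both sit deeply inside $\cI_{n-1}$. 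For (a) and (c), the task reduces to controlling the intersections $(\cI_n+j\omega)\cap\cI_n$ for $j\in\{1,\ldots,M_n\}\setminus\{k\}$. The short shifts $j\leq 2K_{n-1}M_{n-1}$ are handled by $(\cX)_{n-1}$ applied to the enveloping set $\cI_{n-1}$; the shifts close to $M_n$ are precisely controlled by $(\cY'')_n$ at the current level, which places $\cI_n\pm(M_n\mp 1)\omega$ far from $\cY_{n-1}\supseteq\cV_{n-1}^+$; and the intermediate shifts are ruled out by the very construction of $k$ in Lemma~\ref{l.perturbation_closereturn}, where $k$ is designed to be the unique close-return time of $\cI_n$ to itself in its admissible range.

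Once $(\cD 2)_n$ is established, both estimates follow immediately. Since $\partial_\theta\phi^\iota\equiv 0$, inequality~(\ref{con-e4}) yields
\begin{equation*}
|\varphi^+_{\cB''}(\theta)-\varphi^-_{\cB''}(\theta)|\ \leq\ \alpha^{-(M_n-k)/p}|E|,
\end{equation*}
which is (i), and inequality~(\ref{con-e3}) yields
\begin{equation*}
|\partial_\theta\varphi^\pm_{\cB''}(\theta)|\ \leq\ \sum_{l=1}^{M_n-k}\alpha^{-l/p}S\ \leq\ \frac{S}{\alpha^{1/p}-1},
\end{equation*}
which is (ii). For the containment $\cB''\ssq(J+\omega)\times E$, Lemma~\ref{lem-ess_back} applied to the starting points $(\theta+(M_n-k)\omega,e^\pm)$, whose hypotheses are the same combinatorial conditions already verified in (a) and (c), forces $\varphi^\pm_{\cB''}(\theta)\in\inte(E)$; combined with the fact that the fibre width from (i) is much smaller than $|\kreis\smin E|$, this rules out any wrapping and places $\cB''_\theta$ entirely inside $E$.

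The main obstacle is the combinatorial verification in the second paragraph, particularly tracking the intersections $(\cI_n+j\omega)\cap\cI_n$ across the long range $j\in[2K_{n-1}M_{n-1},M_n]$. At level $n$ only the weaker $(\cY'')_n$ (rather than a full $(\cX)_n$) survives the perturbation from Lemma~\ref{l.perturbation_closereturn}, so the argument relies on the fine structural control asserted there, namely that the perturbation produces a single new close return exactly at time $k$ and no others in the relevant window.
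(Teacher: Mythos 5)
Your high-level plan — apply Lemma~\ref{estimate.back} with $I=J+(M_n-k+1)\omega$, $N=M_n-k$, and constant curves $\phi^\iota\equiv e^\pm$ — is exactly the paper's route, and your computation of what the three requirements of $(\cD 2)_n$ translate to is correct. The proposal breaks down, however, in the combinatorial verification.

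The most serious gap is in your argument for requirement (b), $J\ssq\cI_{n-1}$. You invoke ``nestedness of the critical regions'' plus $\eps_n\ll\eps_{n-1}$ to conclude that $J$ ``sits deeply inside $\cI_{n-1}$''. But mere nestedness $\cI_n\ssq\cI_{n-1}$ together with $|I_n^\iota|\leq\eps_n$ and $|I^\iota_{n-1}|\leq\eps_{n-1}$ gives no control whatsoever on the \emph{margin} of containment: $I_n^2$ could touch the boundary of $I_{n-1}^2$. Since $J$ is essentially a $9\eps_n$-neighborhood of $I_n^2$, you actually need the quantitative statement $B_{9\eps_n}(I_n^\iota)\ssq I_{n-1}^\iota$ (componentwise, with matched $\iota$). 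That is precisely Claim~\ref{c.strict_nestedness} in the paper, whose proof occupies a full subsection and rests on the transversal-crossing geometry of $f^{M_{n-1}}(\cA_{n-1}^\iota)$ against $f^{-M_{n-1}}(\cB_{n-1}^\iota)$ together with lower bounds of the form $\alpha^{-p(M_{n-2}+1)}d(C,E)$ on how much the boundary graphs shift between levels. Asserting the margin from size estimates alone is a genuine hole in your proof.

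Your treatment of requirements (a) and (c) also differs from the paper and is not fully justified. You propose to split the range of shifts $j\in[1,M_n]$ into short, intermediate and near-$M_n$ pieces, handling the intermediate range by claiming ``$k$ is designed to be the unique close-return time of $\cI_n$ to itself in its admissible range''. Lemma~\ref{l.perturbation_closereturn} does not assert such uniqueness: it only produces \emph{some} $k$ with a close return and with the prescribed sweep behaviour. The paper instead proves the needed disjointness in a single stroke (Claim~\ref{l.interval_disjoint}) directly from the Diophantine condition: for $l\in\{-M_n-k+1,\ldots,M_n+1\}\smin\{-k,0\}$, the bound $d(l\omega,0)>\gamma(2M_n)^{-\nu}$ dominates $\eps_n$ by several orders once $\alpha$ is large, and condition $(\cY'')_n$ then supplies the last piece for requirement (a). You should replace your ad hoc splitting and the unjustified uniqueness claim with this Diophantine estimate. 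Everything after $(\cD 2)_n$ is established — the final application of Lemma~\ref{estimate.back} and of Lemma~\ref{lem-ess_back} for the containment in $(J+\omega)\times E$ — matches the paper and is fine.
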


\begin{lem} \label{l.A'image_shape} For all $\tau\in\Gamma$, the set
  $\cA''=\closure(f_\tau^{M_n}(\cA'))$ satisfies the following.
  \romanlist
\item $|\cA''_\theta|\ \leq \
  |C|\cdot\alpha^{-\frac{M_n}{p}+(p+\frac{1}{p})k}$ for all $\theta\in
  J+\omega$;
\item $|\partial_\theta\varphi^\pm_{\cA''}(\theta)| \ \leq
\  \alpha^{pk}\cdot \frac{2S}{1-\alpha^{-p}}$ for all $\theta\in
J+\omega$. \listend
\end{lem}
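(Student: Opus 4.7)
The plan is to track the vertical width and slope of $\cA'' = \closure(f_\tau^{M_n}(\cA'))$ as $\cA' = (J - (M_n - 1)\omega) \times C$ is pushed forward $M_n$ times. We parameterise the top and bottom of $\cA'$ by the horizontal curves $\phi^\pm_0(\theta) = c^\pm$ on $I_0 := J - (M_n - 1)\omega$, and set $\phi^\pm_j(\theta) := f^j_{\tau, \theta}(\phi^\pm_0(\theta))$. Up to an orientation-induced swap of $+$ and $-$, the boundary graphs of $\cA''$ coincide with the images $\phi^\pm_{M_n}$, so bounds (i) and (ii) will follow from estimates on $|\phi^+_{M_n} - \phi^-_{M_n}|$ and on $|\partial_\theta \phi^\pm_{M_n}|$. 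The iteration will be split into the two phases illustrated in Figures~\ref{f.stragegy} and~\ref{f.stragegy2}.

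\emph{Phase~1 (steps $0$ through $M_n - k$):} This is the contracting phase, in which only at the very last step does a shift of $I_0$ meet $\cI_n$. We verify condition $(\cD_1)_n$ with $N = M_n - 1 - k$: $I_0\cap\cV^-_{n-1}=\emptyset$ is a consequence of $(\cY'')_n$; the absence of intermediate intersections $(I_0+l\omega)\cap\cI_n=\emptyset$ for $l=0,\dots,N-1$ rests on Lemma~\ref{l.perturbation_closereturn} (the engineered close return at $j=k$ being the \emph{only} one in the relevant range), on $(\cX)_{n-1}$, and on the nesting $\cI_n\subseteq\cI_{n-1}$; and $I_0+N\omega=J-k\omega$ is contained in a single component of $\cI_{n-1}$, say $I^1_{n-1}$, because $I_n^1$ and $I_n^2-k\omega$ are $4\eps_n$-close to each other and deeply contained in $I^1_{n-1}$, the scale separation $\eps_n\ll\eps_{n-1}$ providing ample slack. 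Lemma~\ref{estimate.forward}(i) with $|\partial_\theta\phi_0^\pm|=0$, combined with~\eqref{con-e2} at $j=M_n-k$, then yields
\[
|\phi^+_{M_n-k}(\theta)-\phi^-_{M_n-k}(\theta)|\ \leq\ |C|\,\alpha^{-(M_n-k)/p}, \qquad |\partial_\theta\phi^\pm_{M_n-k}(\theta)|\ \leq\ S+\tfrac{S}{\alpha^{1/p}-1}\ .
\]

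\emph{Phase~2 (steps $M_n-k$ through $M_n$):} After the critical crossing the curves lie in the expanding region and slow-recurrence estimates are no longer directly available. For the remaining $k$ steps we therefore invoke the universal a priori bounds of Remark~\ref{esti-comm}, which depend only on~\eqref{eq:bounds1} and~\eqref{eq:bounddth}. Applying~\eqref{equ-7} with $m=k$ to the gap and~\eqref{equ-8} with $m=k$ to the slope, and feeding in the Phase-1 output as initial data, we obtain
\begin{align*}
|\phi^+_{M_n}(\theta)-\phi^-_{M_n}(\theta)| &\ \leq\ \alpha^{pk}\cdot |C|\,\alpha^{-(M_n-k)/p}\ =\ |C|\,\alpha^{-M_n/p+(p+1/p)k}\ , \\
|\partial_\theta\phi^\pm_{M_n}(\theta)| &\ \leq\ \tfrac{\alpha^{pk}-1}{\alpha^p-1}S+\alpha^{pk}\bigl(S+\tfrac{S}{\alpha^{1/p}-1}\bigr)\ \leq\ \alpha^{pk}\cdot\tfrac{2S}{1-\alpha^{-p}}\ ,
\end{align*}
the last inequality being routine for $\alpha$ sufficiently large. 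This delivers (i) and (ii).

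The main obstacle is the Phase~1 bookkeeping: namely, verifying that away from the engineered return at $j=k$ the shifts $J+j\omega$ avoid $\cI_n$ throughout $j\in[1,M_n-2]$, and that $J-k\omega$ indeed lies deep inside a single component of $\cI_{n-1}$ with margin of several $\eps_n$. Both facts rest on the super-exponential separation of scales built into the multiscale construction (Theorem~\ref{t.Main_qualitative}) and on the close-return control of Lemma~\ref{l.perturbation_closereturn}; once these are in place, Phase~2 reduces to a direct application of the crude forward estimate from Remark~\ref{esti-comm}.
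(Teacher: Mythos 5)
Your proof is correct and follows essentially the same two-phase strategy as the paper: iterating $\cA'$ forward $M_n-k$ times under the slow-recurrence control of Lemma~\ref{estimate.forward} to obtain the intermediate set $\cA'''$ with width $\leq |C|\alpha^{-(M_n-k)/p}$ and slope $\leq S+\tfrac{S}{\alpha^{1/p}-1}$, then applying the crude a priori bounds of Remark~\ref{esti-comm} for the remaining $k$ steps. The paper packages the Phase-1 bookkeeping through Claim~\ref{l.interval_disjoint} (for avoidance of $\cI_n$) and Claim~\ref{c.strict_nestedness} (for $J-k\omega\ssq I^1_{n-1}$), which is precisely the scale-separation argument you invoke.
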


Further, the crucial step in the argument will be to control the position of
$f^{M_n}(\cA')$ with respect to an intermediate set $\cD'$ that is defined as
follows. Let
\begin{equation}
  \label{e.D}
  \cD \ = \ (J-(k-M_{n-1}-1)\omega)\times C \eqand \cD'=f^{k-M_{n-1}}(\cD) \ .
\end{equation}
Concerning the geometry of $\cD'$ itself, we have
\begin{lem}\label{l.D_shape}
  For all $\tau\in\Gamma$, the set $\cD'$ satisfies the following
  assertions.  \romanlist
\item $\cD'\ssq (J+\omega)\times E$;
\item $|C|\cdot\alpha^{-p(k-M_{n-1})}\ \leq \ |\cD'_\theta|\ \leq \
  |C|\cdot\alpha^{-\frac{k-M_{n-1}}{p}}$ for all $\theta\in J+\omega$;
\item $s-\frac{S}{\alpha^{1/p}-1}\ \leq\
  \partial_\theta\varphi^\pm_{\cD'}(\theta)\ \leq\
  S+\frac{S}{\alpha^{1/p}-1}$ for all $\theta\in J+\omega$. \listend
\end{lem}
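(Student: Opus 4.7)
The plan is to apply Lemma~\ref{estimate.forward}, case (ii), directly to the interval $I := J - (k-M_{n-1}-1)\omega$, iteration length $N := k-M_{n-1}-1$, and the constant curves $\phi^1 \equiv c^-$, $\phi^2 \equiv c^+$ (the endpoints of $C$). The $(N+1)$-st iterates of $\phi^1$ and $\phi^2$ over $I$ trace out precisely the two boundary graphs $\varphi^\mp_{\cD'}$ of $\cD' = f^{k-M_{n-1}}(\cD)$ over $J+\omega$, and the vanishing of $\partial_\theta\phi^\iota$ makes every term in the conclusion of Lemma~\ref{estimate.forward} clean.

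First I would verify the combinatorial hypothesis \ref{con-d1}: that $I \cap \cV_{n-1}^- = \emptyset$, that $I + N\omega = J \ssq I_{n-1}^2$, and that $(I+l\omega) \cap \cI_n = \emptyset$ for $l=0,\ldots,N-1$. The inclusion $J \ssq I_{n-1}^2$ uses the nesting $I_n^2 \ssq I_{n-1}^2$ together with the fact that $J$ is merely a $4\varepsilon_n$-neighbourhood of $I_n^2 \cup (I_n^1+k\omega)$, while $\varepsilon_n \ll \varepsilon_{n-1}$ and the $9\varepsilon_{n-1}$-buffer furnished by \ref{e.Xn'} provide ample room. The non-return conditions for shifts $|l| \leq 2K_{n-1}M_{n-1}$ are handled by $(\cX)_{n-1}$ and $(\cY'')_n$ directly; for larger shifts up to $k \leq M_{n-1}^{4q(\nu+1)}$, I would invoke the Diophantine bound $d(l\omega,0) \geq \gamma l^{-\nu}$, which dominates the super-exponentially small widths $\varepsilon_j \sim \alpha^{-M_{j-1}/p}$ once $\alpha$ is large.

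Once \ref{con-d1} is in hand, the three conclusions follow essentially for free. The slope estimate (iii) is precisely \eqref{equ-6} specialised to $|\partial_\theta\phi^\iota| \equiv 0$; case (ii) of the lemma applies because $J \ssq I_{n-1}^2 \ssq I_0^2$, i.e.\ the upward-crossing side. The upper bound in (ii) is \eqref{con-e2} evaluated at $j = N+1 = k-M_{n-1}$ with the two boundary curves of $C$, while the lower bound is the left inequality of \eqref{equ-7} in Remark~\ref{esti-comm}. For (i), I would combine Lemma~\ref{lem-ess} (applied at level $n-1$, whose hypothesis is encoded in the non-return conditions just verified) to conclude that the $N$-th iterate of $\cD$ is a thin strip $\cD'' \ssq J \times C$. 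Since $|f_\theta(C)| \leq \alpha^{-2/p}|C| \ll |E|$ for large $\alpha$, the crossing condition \eqref{eq:crossing} together with the inductive fact that $I_n^2$ sits well inside the transversal-crossing window within $I_{n-1}^2$ ensures that $f_\theta(C) \ssq E$ for every $\theta \in J$. Applying this with $\cD''_\theta \ssq C$ yields $\cD'_\theta \ssq f_\theta(C) \ssq E$.

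The principal obstacle is the combinatorial verification of the non-return conditions in the intermediate range $2K_{n-1}M_{n-1} < l \leq k$, where the slow-recurrence bound $(\cX)_{n-1}$ no longer covers the relevant shifts. Bridging this gap requires a careful interplay between the Diophantine exponent $\nu$ and the growth rates of $M_j$ and $K_j$ dictated by $q=\max\{8,4\nu\}$ in Theorem~\ref{t.Main_qualitative}, so that super-exponential decay of $\varepsilon_j$ dominates the polynomial decay $l^{-\nu}$. A secondary subtlety lives inside the argument for (i): showing that $I_n^2$ indeed lies at the centre of the transversal window in $I_{n-1}^2$ requires tracking this property through the inductive construction of the critical sets, a bookkeeping task that is implicit in the multiscale scheme but nevertheless must be checked at this step.
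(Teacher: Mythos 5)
For parts (ii) and (iii) your proposal matches the paper's: apply Lemma~\ref{estimate.forward} (case (ii), since $J\ssq I_{n-1}^2$) to $I=J-(k-M_{n-1}-1)\omega$, $N=k-M_{n-1}-1$, $\phi^\iota\equiv c^\pm$, using \eqref{equ-6} for the slopes and \eqref{con-e2}/\eqref{equ-7} for the widths; and the hypothesis $(\cD1)_n$ is verified exactly as you sketch, with the non-return for intermediate shifts $l\in[0,N-1]$ coming from the Diophantine bound as in Claim~\ref{l.interval_disjoint}. One small caveat: you invoke a ``$9\varepsilon_{n-1}$-buffer furnished by $(\cX')_n$'' to get $J\ssq I_{n-1}^2$, but $(\cX')$ controls the distance of $\cI_j$ to its translates, not the nesting; what is actually needed is $B_{9\varepsilon_n}(I_n^\iota)\ssq I_{n-1}^\iota$, which is a separate inductive statement (Claim~\ref{c.strict_nestedness}) established via the transversal-crossing geometry of $\tilde\cA_j^\iota$ and $\hat\cB_j^\iota$. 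You flag this bookkeeping, which is good, but it is needed for \emph{all} three parts, not just for (i).

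The real gap is in your argument for (i). You propose to show $\cD''=f^N(\cD)\ssq J\times C$ via Lemma~\ref{lem-ess} (fine) and then conclude $\cD'_\theta\ssq f_\theta(C)\ssq E$ using ``$|f_\theta(C)|\leq\alpha^{-2/p}|C|\ll|E|$'' and the claim that $J$ lies inside the window where $f_\theta(C)\ssq E$. Neither step is justified by the paper's assumptions. First, $|E|$ itself shrinks with $\alpha$: integrating $(\cA3)$ over $E$ gives $|E|<\alpha^{-2/p}$, so $\alpha^{-2/p}|C|\ll|E|$ cannot be inferred; in fact $|f_\theta(C)|$ and $|E|$ are of comparable order and no size comparison is available. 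Second, even granting a size comparison, $f_\theta(C)\ssq E$ for $\theta\in J$ is a \emph{positional} statement that does not follow from ``$I_n^2$ sits inside the transversal-crossing window'': the crossing window (from $(\cA6)$) only guarantees $f_\theta(C)$ \emph{meets} $E$ somewhere, not that it is contained in $E$, and for $\theta$ near $\partial I_0^2$, $f_\theta(C)$ is close to $C$ by $(\cA1)$, not $E$. The paper sidesteps this entirely: instead of iterating $\cD$ all the way to $f^N(\cD)$ and then taking one more step, it stops at $f^{k-2M_{n-1}}(\cD)\ssq(J-(M_{n-1}-1)\omega)\times C\ssq\cA_{n-1}^2$ and then applies $f^{M_{n-1}}$, invoking \eqref{claimresult-1}: $f^{M_{n-1}}(\cA_{n-1}^2)\ssq(I_{n-1}^2+\omega)\times E$. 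That inclusion is a nontrivial inductive fact (Claim~\ref{c.strict_nestedness}): it works because $f^{-M_{n-2}}(\cB_{n-2}^\iota)$ lies a definite distance $\geq\alpha^{-p(M_0+1)}d(C,E)$ inside $E$, while the strip $f^{M_{n-2}}(\cA_{n-2}^\iota)$ is super-exponentially thin ($\leq\alpha^{-M_{n-2}/p}|C|$), so where they cross (which is where $I_{n-1}^\iota$ lives) the image is forced inside $E$. That accumulated contraction over $M_{n-1}$ steps is precisely what makes the argument work; a single application of $f_\theta$ on all of $C$ does not shrink enough and the conclusion $f_\theta(C)\ssq E$ is simply false in general.
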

Now, the following statements yield the required information about the relations
between $\cD'$ and $f^{M_n}(\cA')$.
\begin{lem} \label{l.P_0} For all $\tau\in\Gamma$ there exists an open
  interval $P_0\ssq J+\omega$ of length between $\frac{1-|C|}{4S}\cdot
  \alpha^{-pM_{n-1}}$ and $\frac{4(1-|C|)}{s}\cdot\alpha^{-M_{n-1}/p}$
  such that $\pi_1(\cA''\cap \cD') = (J+\omega)\smin P_0$.  Moreover,
  $\cA''$ leaves and enters $\cD'$ in the clockwise direction at the
  endpoints of $P_0$ and the boundary curves of $\cA''$ intersect
  those of $\cD'$ exactly once. Further, $P_0\cap
  (I_n^1+(k+1)\omega)\neq\emptyset$.
\end{lem}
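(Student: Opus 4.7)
The plan is to use the factorization $\cA'' \cap \cD' = f^{k-M_{n-1}}(G \cap \cD)$, where $G = f^{M_n - k + M_{n-1}}(\cA')$ has $\theta$-range $J-(k-M_{n-1}-1)\omega$ (matching $\cD$) and $\cD$ has constant fibre $C$. Since $f^{k-M_{n-1}}$ is a diffeomorphism, the task reduces to identifying the escape set $P_0^{\mathrm{pre}}=\{\tilde\theta\in\pi_1(G)\mid G_{\tilde\theta}\cap C=\emptyset\}$ and then taking $P_0=P_0^{\mathrm{pre}}+(k-M_{n-1})\omega$. First I would analyse the iterates of $\cA'$ from $0$ to $M_n-k-1$ by applying Lemma~\ref{estimate.forward} inductively to the boundary curves $\theta\mapsto c^\pm$, using the recurrence conditions $(\cX)_{n-1}$ and $(\cY'')_n$ to verify hypothesis \ref{con-d1} at each sub-critical visit of the $\theta$-orbit. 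This yields that $f^{M_n-k-1}(\cA')$ is a nearly horizontal thin strip in $(J-k\omega)\times C$ of vertical extent $\lesssim|C|\alpha^{-(M_n-k-1)/p}$ and boundary slope bounded by $S/(\alpha^{1/p}-1)$.

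Next I would exploit that $J-k\omega$ is a small neighbourhood of $I_n^1$: by Lemma~\ref{l.perturbation_closereturn}(ii), $I_n^1+k\omega$ lies within $4\eps_n$ of $I_n^2$, so $J-k\omega$ sits in a neighbourhood of $I_n^1\subseteq I_0^1$, and for $\eps_n$ small enough it is contained in $I_0^1$. Every $\theta'\in J-k\omega$ then satisfies the crossing and negative-slope conditions \eqref{eq:crossing} and \eqref{eq:s}. Applying one further iterate $f$, the image of the thin strip sweeps vertically at speed in $[-S,-s]$ as $\theta'$ varies, crossing from $C$ through the gap into $E$. The $\theta'$-subinterval $P^\flat\ssq J-k\omega$ on which the image lies entirely in $\T^1\smin C$ is open, and an application of Lemma~\ref{estimate.forward}(i) to control the error from the finite strip thickness gives $|P^\flat|$ in approximately $[(1-|C|)/S,\,(1-|C|)/s]$.

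For the remaining $M_{n-1}$ iterates up to $M_n-k+M_{n-1}$, fibres outside $P^\flat+\omega$ stay in $C$ by another application of Lemma~\ref{estimate.forward}, so their $\tilde\theta$ lie outside $P_0^{\mathrm{pre}}$. Inside $P^\flat+\omega$ the fibre has entered $E$; during the next $M_{n-1}$ iterates it is transported through the expanding region, and its $\theta$-boundary slope is amplified at each step by the chain-rule factor $\partial_xf_\sigma\in(\alpha^{2/p},\alpha^p)$ in $E$ (compare~\eqref{equ-1}). After $M_{n-1}$ iterates the slope magnitude therefore lies in approximately $[s\alpha^{2M_{n-1}/p},\,S\alpha^{pM_{n-1}}]$. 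Since the vertical target to avoid is $C$, of complementary size $1-|C|$, the $\tilde\theta$-length on which both boundary curves of $G$ remain outside $C$ is inversely proportional to this slope, producing $|P_0^{\mathrm{pre}}|\in\bigl[\tfrac{1-|C|}{4S}\alpha^{-pM_{n-1}},\,\tfrac{4(1-|C|)}{s}\alpha^{-M_{n-1}/p}\bigr]$ once the transition buffers and derivative-error terms are absorbed. Connectedness of $P_0^{\mathrm{pre}}$ follows from the strict monotonicity of the $G$-boundary curves on the escape portion, since their negative slope is preserved under multiplication by the positive factors $\partial_xf_\sigma$.

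Finally, the transversal exit/entry of $\cA''$ at the endpoints of $P_0$ and the single-intersection of each pair of boundary curves follow by contrasting the steep negative slope of $\varphi^\pm_{\cA''}$ (inherited from the amplification argument) with the positive bounded slope $[s-S/(\alpha^{1/p}-1),\,S+S/(\alpha^{1/p}-1)]$ of $\varphi^\pm_{\cD'}$ supplied by Lemma~\ref{l.D_shape}(iii). The claim $P_0\cap(I_n^1+(k+1)\omega)\neq\emptyset$ is obtained by tracing the overall shift from $P^\flat$ (centred near $I_n^1$): the single iterate crossing the critical region contributes $\omega$, the $M_{n-1}$ iterates inside $E$ contribute $M_{n-1}\omega$, and the final translation by $(k-M_{n-1})\omega$ produces a net shift of $(k+1)\omega$, placing $P_0$ on a neighbourhood of $I_n^1+(k+1)\omega$. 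The principal technical obstacle will be the expanding-region phase: simultaneously propagating fibre-position and boundary-slope estimates through $M_{n-1}$ iterates in $\kreis\times E$, while accounting for whatever sub-critical visits of the $\theta$-orbit the slow-recurrence hypotheses still permit.
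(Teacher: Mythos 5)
Your factorisation $\cA''\cap\cD'=f^{k-M_{n-1}}(G\cap\cD)$ with $G=f^{M_n-k+M_{n-1}}(\cA')$ is correct, and the overall geometric picture (thin strip, critical-region crossing, steep escape portion) matches the one the paper is after. However, your route is genuinely different from the paper's, and the point where it differs is also where a real technical gap sits.

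The paper factors \emph{earlier}, at time $M_n-k$, writing $\cA''\cap\cD'=f^k\bigl(\cA'''\cap f^{-M_{n-1}}(\cD)\bigr)$ with $\cA'''=f^{M_n-k}(\cA')$. It then compares the steep strip $\cA'''$ (slope $\approx[-S,-s]$, controlled by Lemma~\ref{estimate.forward}) with the \emph{backward} iterate $\hat\cD_c=f^{-M_{n-1}}(\cD_c)$ of the complementary box $\cD_c=(J-(k-M_{n-1}-1)\omega)\times X_c$, $X_c=\kreis\smin\inte(C)$. Backward iteration of a set whose fibre avoids $\inte(C)$ is exactly the situation covered by Lemmas~\ref{lem-ess_back} and~\ref{estimate.back}, so the paper can write down, with no new machinery, that $\hat\cD_c$ is a nearly horizontal strip of thickness between $(1-|C|)\alpha^{-pM_{n-1}}$ and $(1-|C|)\alpha^{-M_{n-1}/p}$ and slope $\leq S/(\alpha^{1/p}-1)$. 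The length of $P_0$ then drops out of elementary plane geometry (thickness of $\hat\cD_c$ divided by the relative slope $\in(s/2,2S)$), and the claim $P_0\cap(I_n^1+(k+1)\omega)\neq\emptyset$ is read off from the \emph{definition} of $I_n^1$ as the projection of $\tilde\cA_{n-1}^1\cap\hat\cB_{n-1}^1$, since $\cA'''\ssq\tilde\cA_{n-1}^1$ and $\hat\cD_2\ssq\hat\cB_{n-1}^1$ sandwich the escape interval.

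Your approach instead pushes the steep strip $\cA'''$ \emph{forward} by $M_{n-1}$ more steps and tracks the slope amplification of the escape portion inside the expanding region. This is dual to the paper's argument, but it is not covered by the paper's tool kit: Lemma~\ref{estimate.forward} handles forward iteration of curves in $\T^1\smin\inte(E)$ (contracting side), and Lemma~\ref{estimate.back} handles backward iteration of curves in $\T^1\smin\inte(C)$; neither gives you the controlled forward amplification of a curve travelling through $E$ under the slow-recurrence conditions. You flag this yourself as the principal obstacle, and you would indeed need to prove a new quantitative lemma (a forward analogue of Lemma~\ref{estimate.back} in the expanding region). This is the main gap. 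Two smaller imprecisions: the asserted slope range ``$[s\alpha^{2M_{n-1}/p},S\alpha^{pM_{n-1}}]$'' overstates the lower end -- with the imperfect proportion of time in $E$ (Lemma~\ref{lem-2}, factor $\beta_n$, and \eqref{e.beta}) the correct effective lower bound is $\sim s\alpha^{M_{n-1}/p}$, which is what the final bound on $|P_0|$ actually requires; and the argument for $P_0\cap(I_n^1+(k+1)\omega)\neq\emptyset$ via ``net shift of $(k+1)\omega$'' is only bookkeeping of base coordinates and does not establish which part of $J+\omega$ the escape interval sits in -- you would still need to locate $P_0$ relative to the crossing points $a_n^1,b_n^1$ as the paper does via the boundary-curve identities at the $I_n^1$-endpoints.
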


\begin{lem} \label{l.arc} There exists an arc
  $\Xi=\{(\theta,\xi(\theta))\mid \theta\in J+\omega\}\ssq
  (J+\omega)\times C$, with continuous $\xi:J+\omega\to C$, such that
  $P_1=\pi_1(\cA''\cap \Xi)$ is an interval.
\end{lem}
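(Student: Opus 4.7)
The plan is to exploit the hook structure of $\cA''$ established in Lemma~\ref{l.P_0} together with the thin-fiber estimate of Lemma~\ref{l.A'image_shape} to construct the arc $\Xi$ so that its intersection with $\cA''$ projects onto a single interval. First, I will analyze the set $\{\theta\in J+\omega : \cA''_\theta\cap C\neq\emptyset\}$. Since $J$ is a single interval by Lemma~\ref{l.perturbation_closereturn}(ii), $\cA' = (J-(M_n-1)\omega)\times C$ is a single rectangle, and $\cA''=\closure(f_\tau^{M_n}(\cA'))$ is connected with each fiber $\cA''_\theta$ a single interval, namely the homeomorphic image of $C$ under the fibre-map composition. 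By Lemma~\ref{l.P_0}, $\cA''$ leaves $\cD'\subseteq(J+\omega)\times E$ in the clockwise direction at the endpoints of $P_0$, and together with the bounds on $\varphi^\pm_{\cA''}$ from Lemma~\ref{l.A'image_shape} this pins down the location of the hook dips into $C$. I would show that $\{\theta\in J+\omega : \cA''_\theta\cap C\neq\emptyset\}$ is a finite union of open intervals, at least one of which arises from the dip inside $P_0$ whose existence is guaranteed by $P_0\cap(I_n^1+(k+1)\omega)\neq\emptyset$.

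Next, I would select one such interval $P_1$ and define $\xi(\theta)$ for $\theta\in P_1$ as the midpoint of the interval $\cA''_\theta\cap C$. This map is continuous on $P_1$ since the boundary graphs $\varphi^\pm_{\cA''}$ are continuous (with derivatives controlled in Lemma~\ref{l.A'image_shape}(ii)). At the endpoints of $P_1$ the intersection $\cA''_\theta\cap C$ degenerates to a boundary point of $C$ (where $\cA''_\theta$ detaches from $C$), so $\xi$ extends continuously to $\closure(P_1)$. I would then extend $\xi$ continuously to the rest of $J+\omega$ inside $C$, routing it around any other dips of $\cA''$ into $C$. This is possible because $\cA''$ has fibre width at most $|C|\cdot\alpha^{-M_n/p+(p+1/p)k}$, which by the choice $M_n\gg k$ is much smaller than $|C|$, leaving ample room inside $C$ for $\xi$ to avoid $\cA''_\theta$ whenever $\theta\notin P_1$.

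The principal difficulty is the first step: proving that the set $\{\theta:\cA''_\theta\cap C\neq\emptyset\}$ has a controlled decomposition and that a single interval $P_1$ can be isolated. This demands careful tracking of $\varphi^\pm_{\cA''}$ through the hook region by combining the slope and width bounds of Lemma~\ref{l.A'image_shape} with the clockwise-exit condition of Lemma~\ref{l.P_0} and the relative position of $C$ and $E$ in $\kreis$. Once such a $P_1$ has been identified and $\xi$ constructed, the identity $P_1=\pi_1(\cA''\cap\Xi)$ is immediate: on $P_1$ we have $\xi(\theta)\in\cA''_\theta$ by construction, whereas outside $P_1$ either $\cA''_\theta\cap C=\emptyset$ (so $\xi(\theta)\in C$ is automatically disjoint from $\cA''_\theta$) or $\xi(\theta)$ was explicitly routed around $\cA''_\theta$ in $C$.
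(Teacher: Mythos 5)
The approach you propose differs fundamentally from the paper's, and the pivotal step does not hold up. You want to first show that $\{\theta\in J+\omega : \cA''_\theta\cap C\neq\emptyset\}$ is a finite union of open intervals, then select the interval coming from the hook inside $P_0$ and define $\xi$ as the midpoint of $\cA''_\theta\cap C$ there, routing it around other ``dips'' elsewhere. The problem is that you have no control over the oscillation of $\cA''$ over $J+\omega$ at the level you are working on. The only slope bound available for $\varphi^\pm_{\cA''}$ is Lemma~\ref{l.A'image_shape}(ii), namely $|\partial_\theta\varphi^\pm_{\cA''}|\leq\alpha^{pk}\cdot\frac{2S}{1-\alpha^{-p}}$. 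Over an interval of length $|J|\approx\eps_n\approx\alpha^{-M_{n-1}/2p}$, with $k$ as large as $M_{n-1}^{4q(\nu+1)}$, the product $\alpha^{pk}\cdot\eps_n$ blows up; the boundary graphs are permitted to oscillate wildly and the claim that the intersection with $C$ is a \emph{finite} union of intervals (let alone one per hook) is not justified. The appeal to $P_0\cap(I_n^1+(k+1)\omega)\neq\emptyset$ for the existence of a dip into $C$ is likewise not a proof; that condition locates $P_0$ relative to the critical region but says nothing about the vertical extent of $\cA''_\theta$.

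The paper circumvents exactly this difficulty by never analysing $\cA''$ over $J+\omega$ directly. Instead it chooses $\zeta$ (hence $\xi$) explicitly, pulls $\Xi$ back to $\Upsilon=f^{-1}(\Xi)\ssq J\times E$ and then $f^{-(k-1)}(\Upsilon)$, and compares this with $\cA'''=f^{-k}(\cA'')=f^{M_n-k}(\cA')$. At this level the slopes are genuinely controlled: $\partial_\theta\varphi^\pm_{\cA'''}$ has a definite sign with $|\partial_\theta\varphi^\pm_{\cA'''}|\geq s-\frac{S}{\alpha^{1/p}-1}$ by \eqref{esti-A'''-2}, while $|\partial_\theta\Psi|\leq\frac{S}{\alpha^{1/p}-1}$ by Lemma~\ref{estimate.back}. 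A transversal single crossing follows, and then $P_1$ is obtained by translating by $k\omega$. Your scheme has no analogue of this passage to the preimage, which is the essential device; without it the finiteness of the dips, the well-definedness of the midpoint map, and the ``routing'' step all lack support. A further, smaller issue: you define $\xi$ fibre by fibre as a midpoint and then ask to extend it continuously through potentially many unidentified dips, but this is only a description of what a solution should look like, not a construction.
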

\begin{rem} \label{r.P_1} Note that since $\cD'\ssq (J+\omega)\times
  E$, $|\cA_\theta''|\leq \alpha^{-\frac{M_n}{p}+(p+\frac{1}{p})k}$
  and $d(C,E)>\alpha^{-\frac{M_n}{p}+(p+\frac{1}{p})k}$ if $n\in\N$ is
  sufficiently large, we have that $P_1\ssq P_0$. From now on, we
  always assume that this is the case.  Moreover, as the slope of
  $\partial_\theta\varphi_{\cA''}^\pm(\theta)$ is smaller than
  $\alpha^{pk}\cdot\frac{2S}{1-\alpha^{-p}}$, we obtain
  \begin{equation}
    \label{eq:13}
    d(P_1,\kreis\smin P_0) \ \geq \
    \frac{d(C,E)-2\alpha^{-\frac{M_n}{p}+(p+\frac{1}{p})k}}{\alpha^{p(k+1)}\cdot\frac{2S}{\alpha^p-1}}
    \ \geq \ d(C,E)\cdot \alpha^{-p(k+1)}/2 \ \geq \ \alpha^{-2pk} \ ,
  \end{equation}
  where the last inequality again requires that $n$ (and thus $k\geq
  M_{n-1}$) is sufficiently large.
\end{rem}

Denote by $J^-$ and $J^+$ the left, respectively right component of
$(J+\omega)\smin P_0$. Define
\begin{eqnarray*}
  \cL_1 & = & \{(\theta,x)\mid \theta\in J^-, x\in[e^-,\varphi^+_{\cD'}(\theta)]\} \\
  \cR_1 & = & \{(\theta,x)\mid \theta\in J^+, x\in[\varphi^-_{\cD'}(\theta),e^+]\} \ .
\end{eqnarray*}
Further, denote by $P_0^-$ and $P_0^+$ the left, respectively right
component of $P_0\smin \inte(P_1)$. Define
\begin{eqnarray*}
  \cL_2 & = & \{(\theta,x)\mid \theta\in P_0^-, x\in[e^-,\varphi^-_{\cD'}(\theta)]\} \\
  \cR_2 & = & \{(\theta,x)\mid \theta\in P_0^+, x\in[\varphi^+_{\cD'}(\theta),e^+]\} \ .
\end{eqnarray*}
Let $\cL=\cL_1\cup \cL_2$ and $\cR=\cR_1\cup\cR_2$. (See Figure~\ref{f.mainproof} for
an illustration.)
\begin{rem} \label{r.A''_inclusion} We have $\cA''\cap \left( \
    (J+\omega)\ \times E\right) \ssq \cL\cup\cR$ if $\alpha$ and $n$
  are large.
\end{rem}
\begin{proof}
  Note that since $\Xi\ssq (J+\omega) \times C$ and
  $|\cA''_\theta|\leq\alpha^{-\frac{M_n}{p}+(p+\frac{1}{p})k}<d(C,E)$
  (using $k\ll M_n$ and assuming $\alpha$ to be large), we have
  $\cA''\cap (P_1\times E)=\emptyset$. Moreover, $\cA''$ only crosses
  the arc $\Xi$ once. The statement therefore follows from the way
  $\cA''$ leaves and enters $\cD'\ssq (J+\omega) \times E$, according
  to Lemma~\ref{l.P_0}.
\end{proof}

Finally, the following statement ensures that at the extremal points of
$\Gamma$, different situations occur.
\begin{lem} \label{l.extremal_positions} At $\tau=\tau^-$, the set
  $\cB''$ intersects $\cR$, whereas at
  $\tau=\tau^+$ it intersects $\cL$.
\end{lem}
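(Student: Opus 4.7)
The plan is to exhibit, for each $\tau\in\Gamma$, a specific point in $\cA''\cap\cB''$ arising from the ``$\iota=2$'' transversal crossing $f_\tau^{M_n}(\cA_n^2)\cap f_\tau^{-M_n}(\cB_n^2)$ that persists throughout $\Gamma$, and then to identify on which side of the hook region $P_0$ its horizontal projection falls, using the positional information of Lemma~\ref{l.perturbation_closereturn}(iii). Since this crossing is horizontally located at $I_n^2(\tau)+\omega$ and is caused by a critical passage distinct from the fast-return passage at $I_n^1+(k+1)\omega$, it should survive the small parameter shift defining~$\Gamma$.

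First I would establish the persistence: at $\tau_0\in\Lambda^{\hat f}$, the full multiscale analysis yields a single transversal crossing of the thin strips $f^{M_n}(\cA_n^2)$ and $f^{-M_n}(\cB_n^2)$ inside $(I_n^2+\omega)\times E$, with the geometry of Figure~\ref{Fig.SNA}(a). For $\tau\in\Gamma\subseteq B_\zeta(\tau_0)$, the slow-recurrence conditions $(\cX)_{n-1}$ and $(\cY'')_n$ supplied by Lemma~\ref{l.perturbation_closereturn}(i) suffice to control the shape and position of these strips via Lemmas~\ref{estimate.forward} and~\ref{estimate.back}; together with the freedom to choose $\zeta$ arbitrarily small, this ensures that the transversal crossing survives, yielding a point $p(\tau)\in f^{M_n}(\cA_n^2)\cap f^{-M_n}(\cB_n^2)$ located in $(I_n^2(\tau)+\omega)\times E$. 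By Lemma~\ref{l.inclusions}, $p(\tau)\in\cA''\cap\cB''$.

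Next I would pin down the horizontal location of $p(\tau^-)$. By Lemma~\ref{l.perturbation_closereturn}(iii), at $\tau^-$ the interval $I_n^1+k\omega$ lies strictly to the left of $I_n^2$ inside $J$, hence $I_n^1+(k+1)\omega$ lies to the left of $I_n^2+\omega$ in $J+\omega$; and since $\tau^-$ is the extremal endpoint of $\Gamma$, the horizontal distance between these two intervals equals the threshold $4\eps_n$. By Lemma~\ref{l.P_0} together with the lower bound $\eps_n\geq 2\alpha^{-M_{n-1}/p}/s$ from~\eqref{e.eps_j} and the strict inequality $|C|>0$, we have
\[
  |P_0|\ \leq\ \tfrac{4(1-|C|)}{s}\alpha^{-M_{n-1}/p}\ <\ 2\eps_n \ .
\]
Combined with $|I_n^1+(k+1)\omega|\leq\eps_n$ from Proposition~\ref{finite_times}, a direct length comparison shows that the right endpoint of $P_0$ is strictly less than the left endpoint of $I_n^2+\omega$, so $I_n^2+\omega\subseteq J^+$. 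Consequently $\pi_1(p(\tau^-))\in J^+$, and Remark~\ref{r.A''_inclusion} yields $p(\tau^-)\in\cA''\cap(J^+\times E)\subseteq\cR_1\subseteq\cR$, proving $\cB''\cap\cR\neq\emptyset$. The symmetric argument at $\tau^+$---where $I_n^1+k\omega$ lies to the right of $I_n^2$---places $\pi_1(p(\tau^+))$ in $J^-$ and gives $p(\tau^+)\in\cL_1\subseteq\cL$.

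The principal obstacle is the persistence step: the strips $f^{M_n}(\cA_n^2)$ and $f^{-M_n}(\cB_n^2)$ have exponentially small thickness of order $\alpha^{-M_n/p}$, so their transversal crossing is very delicate, and the parameter perturbation must be shown not to displace either strip by more than this thickness near $(I_n^2+\omega)\times E$. The verification proceeds through a careful application of Lemmas~\ref{estimate.forward} and~\ref{estimate.back} to the forward and backward orbits of $\cA_n^2$ and $\cB_n^2$---whose critical passages do not include the fast-return passage at $I_n^1+(k+1)\omega$ and are therefore well-controlled under $(\cX)_{n-1}$ and $(\cY'')_n$ alone---together with the parameter-derivative estimates of Proposition~\ref{finite_times} and assumption~\eqref{eq:bounddlambda}.
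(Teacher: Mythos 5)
Your third paragraph (the localization step via $|P_0|<2\eps_n$ and the $4\eps_n$ spacing between $I_n^1+(k+1)\omega$ and $I_n^2+\omega$) is essentially the paper's argument and is fine. The problem is your persistence step, which contains a genuine error. You assert that the forward orbit of $\cA_n^2$ ``does not include the fast-return passage at $I_n^1+(k+1)\omega$'' and that the crossing $f^{M_n}_\tau(\cA_n^2)\cap f^{-M_n}_\tau(\cB_n^2)$ therefore survives for \emph{all} $\tau\in\Gamma$. Both claims are wrong, and the second one is incompatible with the main theorem: by Lemma~\ref{l.inclusions}, $f^{M_n}(\cA_n^2)\cap f^{-M_n}(\cB_n^2)\ssq\cA''\cap\cB''$, so if that intersection were nonempty throughout $\Gamma$, the intermediate parameter $\tau$ with $\cA''\cap\cB''=\emptyset$ (the whole point of the construction) could not exist. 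Concretely, $\cA_n^2$ starts on the fibre $I_n^2-(M_n-1)\omega$, and since by construction $I_n^1+k\omega$ is within distance $O(\eps_n)$ of $I_n^2$, at step $M_n-1-k$ its $\theta$-coordinate sits in (or very near) $I_n^1\ssq\cI_0$. This \emph{is} the fast-return critical passage, and it is exactly what generates the two hooks of $\cA''$ described in Lemma~\ref{l.P_0} that make the intersection disappear. So the ``delicate obstacle'' you flag cannot be resolved in the way you outline.

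The paper gets around this by never trying to control the crossing for all of $\Gamma$; it only needs the two endpoints. Its mechanism is also different from what you propose: rather than tracking a crossing point continuously in $\tau$, it verifies that the stronger condition $(\cX)_n$ (one level deeper than the $(\cX)_{n-1}$, $(\cY'')_n$ you invoke) holds at $\tau=\tau^\pm$, using the Diophantine spacing, $d_H(I_n^1+k\omega,I_n^2)\leq 5\eps_n$, and the exact value $d(I_n^1+k\omega,I_n^2)=4\eps_n$ at the endpoints. With $(\cX)_n$ in hand, Proposition~\ref{finite_times} directly gives $\cI_{n+1}\neq\emptyset$, hence $f^{M_n}(\cA_n^2)\cap f^{-M_n}(\cB_n^2)\neq\emptyset$ at $\tau^\pm$ — no transversality bookkeeping across $\Gamma$ needed. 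Your proof would be repaired by replacing the persistence argument with this endpoint verification of $(\cX)_n$; the rest (localization in $\cL$ vs.\ $\cR$ via $P_0\cap(I_n^2+\omega)=\emptyset$) can then be kept as you wrote it.
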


Based on the above statements, we can now turn to the proof of
Theorem~\ref{t.modelocking_quantitative}, which is illustrated in
Figure~\ref{f.mainproof}.
   \begin{figure}[t!]
  \begin{center}
 \includegraphics[width=0.6\linewidth]{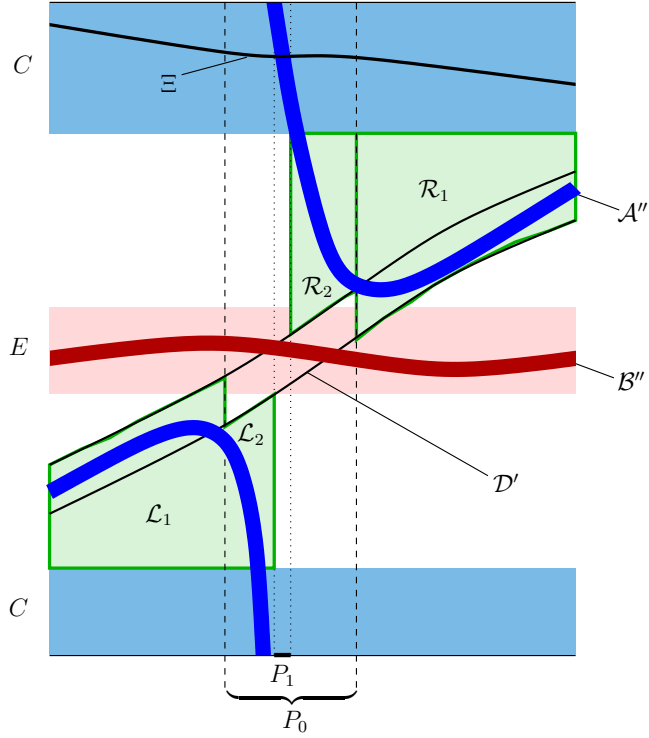}
  \end{center}

\caption{\small Illustration of the different sets
  considered in the proof of Theorem~\ref{t.modelocking_quantitative}.
  \label{f.mainproof}}

\end{figure}
\proof[\bf Proof of Theorem~\ref{t.modelocking_quantitative}.]

Suppose $\omega\in\cD(\gamma,\nu)$ and $\hat f\in\cP_\omega$ satisfies
conditions (\ref{eq:Cinvariance})--(\ref{eq:boundinterval}). Fix
$\delta>0$, denote by $\alpha_*',\eps_*'>0$ the constants given by
Theorem~\ref{t.Main_qualitative} and recall that $q\geq
\max\{8,4\nu\}$. We have to show that for any parameter $\tau_0$
contained in the set $\Lambda^{\hat f}$ from
Theorem~\ref{t.Main_qualitative} and any $\zeta>0$ there exists some
$\tau\in B_\zeta(\tau_0)$ such that $f_\tau$ is mode-locked, provided
that $\alpha>\alpha'_*$ is sufficiently large and $\eps_0<\eps'_*$ is
sufficiently small. %Here $\alpha'_*$ and
% $\eps'_*$ are the constants from Theorem~\ref{t.Main_qualitative}.
 The largeness and smallness assumptions on $\alpha$ and $\eps_0$ will
 be used implicitely from now on, and all estimates below should be
 understood under this premise.

 To that end, fix $\tau_0\in\Lambda^{\hat f}$ and $\zeta>0$. Choose
 $n\in\N$ and $\Gamma=[\tau^-,\tau^+]\ssq B_\zeta(\tau_0)$ according
 to Lemma~\ref{l.perturbation_closereturn}. By
 Proposition~\ref{p.modelocking_criterion}, it suffices to find some
 $\tau\in\Gamma$ such that $\cC_n=\emptyset$. Moreover, due to
 Lemma~\ref{l.inclusions}, this follows if we can show that
 \begin{equation}
   \label{eq:9}
   f^{M_n}_\tau(\cA') \cap f_\tau^{-(M_n-k)}(\cB') \ = \ \cA''\cap\cB'' \ = \ \emptyset \ .
 \end{equation}
 Due to Lemma~\ref{l.preimage_shape}, we know that $\cB''\ssq
 (J+\omega)\times E$, and we have $\cA''\cap((J+\omega)\times E) \ssq
 \cL(\tau)\cup\cR(\tau)$ by Remark~\ref{r.A''_inclusion}. We claim
 that for all $\tau\in\Gamma$ the strip $\cB''$ can intersect at most
 one of the two sets $\cL$ and $\cR$.  Since it intersects $\cR$ for
 $\tau=\tau^-$ and $\cL$ for $\tau=\tau^+$ by
 Lemma~\ref{l.extremal_positions} and all sets are closed and depend
 continuously on $\tau$, this means that for some
 $\tau\in(\tau^-,\tau^+)$ we must have $f_\tau^{-(M_n-k)}(\cB')\cap
 (\cL(\tau)\cup\cR(\tau))=\emptyset$. This, in turn, yields
 (\ref{eq:9}) and thus completes the proof.\smallskip

 Hence, it remains to show that $\cB''$ cannot intersect both $\cL$
 and $\cR$ at the same time.  Suppose for a contradiction that
 $(\theta_1,x_1)\in \cB''\cap \cL$ and $(\theta_2,x_2)\in
 \cB''\cap\cR$.  By Lemma~\ref{l.preimage_shape}, we have
   \begin{equation}
           \label{e.x-value_difference}
           x_2-x_1 \ \leq \ \frac{S}{\alpha^{1/p}-1}\cdot|\theta_2-\theta_1|
             + |E|\cdot \alpha^{-\frac{M_n-k}{p}} \ .
   \end{equation}
   We distinguish four cases. Thereby, we will use freely the fact
   that $\alpha$ is sufficiently large and indicate when this is used
   by placing $(\alpha)$ over the respective inequality signs.
   \begin{description}
   \item[Case I] Suppose $(\theta_1,x_1)\in\cL_1$ and
     $(\theta_2,x_2)\in\cR_1$. In this case, we have that
     \begin{equation}
       \label{eq:12}
       \theta_2-\theta_1 \ \geq \ |P_0| \ \geq \ \frac{1-|C|}{4S}\cdot\alpha^{-pM_{n-1}}
     \end{equation}
    by Lemma~\ref{l.P_0} and
    \begin{eqnarray*}
      x_2-x_1 & \geq & \varphi^-_{\cD'}(\theta_2)-\varphi^+_{\cD'}(\theta_1)
      \\ &  \stackrel{\textrm{Lem.~\ref{l.D_shape}(ii)}}{\geq} & \
    \varphi^-_{\cD'}(\theta_2)-\varphi^-_{\cD'}(\theta_1) - \alpha^{-\frac{k-M_{n-1}}{p}}
      \\
      &\stackrel{\textrm{Lem.~\ref{l.D_shape}(iii)}}{\geq} &  \frac{s}{2}\cdot
      (\theta_2-\theta_1)-\alpha^{-\frac{k-M_{n-1}}{p}} \\ &
      \ \stackrel{(\ref{eq:12}),(\alpha)}{\geq} &
      \frac{S}{\alpha^{1/p}-1}\cdot(\theta_2-\theta_1) + \alpha^{-\frac{M_n-k}{p}} \ ,
    \end{eqnarray*}
    contradicting (\ref{e.x-value_difference}).
  \item[Case II] Suppose $(\theta_1,x_1)\in\cL_1$ and
    $(\theta_2,x_2)\in\cR_2$. In this case, we have
    \begin{equation}
      \label{eq:11}
      \theta_2-\theta_1 \ \geq \ d(P_1,\kreis\smin P_0) \ \geq \alpha^{-2pk}
    \end{equation}
    by Remark~\ref{r.P_1}. Moreover, the definitions of $\cL_1(\tau)$
    and $\cR_2(\tau)$ imply that
    \begin{eqnarray*}
      x_2-x_1 & \geq & \varphi^+_{\cD'}(\theta_2)-\varphi^+_{\cD'}(\theta_1) \\
      &  \stackrel{\textrm{Lemma~\ref{l.D_shape}(iii)},(\alpha)}{\geq} & 
       \frac{s}{2}\cdot(\theta_2-\theta_1)
      \ \stackrel{(\ref{eq:11}),(\alpha)}{\geq} \
      \frac{S}{\alpha^{1/p}-1}\cdot(\theta_2-\theta_1) +\alpha^{-\frac{M_n-k}{p}} \ ,
    \end{eqnarray*}
    again contradicting (\ref{e.x-value_difference}).
   \item[Case III] The case $(\theta_1,x_1)\in\cL_2$ and
     $(\theta_2,x_2)\in\cR_1$ is symmetric to the preceeding one and
     can be treated in the same way.
   \item[Case IV] Finally, suppose $(\theta_1,x_1)\in\cL_2(\tau)$ and
     $(\theta_2,x_2)\in\cR_2(\tau)$. In this case
     \begin{eqnarray*}
       x_2-x_1 & \geq & \varphi^+_{\cD'}(\theta_2) - \varphi^-_{\cD'}(\theta_1) \\
       & \stackrel{\textrm{Lemma~\ref{l.D_shape}}(ii)}{\geq}&
       \varphi^+_{\cD'}(\theta_2) - \varphi^+_{\cD'}(\theta_1) + 
         |C|\cdot \alpha^{-p(k-M_{n-1})} \\
       & \stackrel{\textrm{Lemma~\ref{l.D_shape}}}{\geq}& 
                \frac{s}{2}\cdot(\theta_2-\theta_1)  + |C|\cdot
       \alpha^{-p(k-M_{n-1})}\\
       &\stackrel{(\alpha)}{\geq}&
       \frac{S}{\alpha^{1/p}-1}\cdot(\theta_2-\theta_1) +\alpha^{-\frac{M_n-k}{p}} \ ,
     \end{eqnarray*}
    contradicting (\ref{e.x-value_difference}) as before. \qed\medskip

   \end{description}

\section{Proofs of the geometric estimates}\label{MainProof}

Throughout the proofs of this section, we will at most times omit
the parameter $\tau$ from the notation and write $f$, $f_{\theta}$
and $I^\iota_n$ instead of $f_\tau$, $f_{\tau,\theta}$ and
$I^\iota_n(\tau)$ (with the exception of the proof of
Lemma~\ref{l.perturbation_closereturn}). Moreover, we will always
assume implicitely that the parameter $\alpha$ is sufficiently large
and $\eps_0$ is sufficiently small. All estimates below should be
understood in this sense. Sometimes, but not always, we will
indicate that this fact is used by placing $(\alpha)$ or $(\eps_0)$
over the respective inequality signs.

\subsection{Proof of Lemma~\ref{l.perturbation_closereturn}. }
\label{CloseReturn}

According to (\ref{e.M_j}) and (\ref{e.eps_j}), there exists some $n\in \N$,
$n\geq 10$, so that $\frac{s}{4L}\varepsilon_{n-1}<\zeta$. We fix this $n$ and
first prove that there exists some $k\in\left[2K_{n-1}M_{n-1}+1,\
  M_{n-1}^{4q(\nu+1)}\right]$, such that
\begin{equation} \label{e.close_return_creation1}
d\left(I_n^1(\tau_0)+k\omega,I_n^2(\tau_0)\right) \ < \
 \frac{s\ell }{8LS}\varepsilon_{n-1}
\end{equation}
and $I_n^1(\tau_0)+k\omega$ is to the left of $I_n^2(\tau_0)$ in a
local sense. Since $(\cX)_{n-1}$ holds for $\tau_0$, it is obvious
that $k\geq 2K_{n-1}M_{n-1}+1$.

For any $N\in \N$, there is a positive integer $m\leq N$ such that $d(m\omega,
0)\leq \frac{1}{N}$. Moreover, since $\omega$ is Diophantine, we have
$d(m\omega, 0)\geq \gamma m^{-\nu}\geq \gamma N^{-\nu}$. Together, this implies
that after $N\left([\gamma^{-1}N^\nu]+1\right)$ iterates, the orbit of $\omega$
is $\frac{1}{N}$ dense in the circle. Thus, there exists some $k\leq
N\left([\gamma^{-1}N^\nu]+1\right)\leq 2\gamma^{-1}N^{\nu+1}$ such that
$d\left(I_n^1(\tau_0)+k\omega, I_n^2(\tau_0)\right)\leq 1/N$ and
$I_n^1(\tau_0)+k\omega$ is to the left of $I_n^2(\tau_0)$. Taking
$N=\left[\frac{4LS}{\ell}\alpha^{M_{n-2}/p}\right]+1$, we obtain
$$d(I_n^1(\tau_0)+k\omega, I_n^2(\tau_0))\ < \ \frac{\ell}{4LS}\alpha^{-M_{n-2}/p}\ 
\stackrel{(\ref{e.eps_j})}{\leq} \
\frac{s\ell }{8LS}\varepsilon_{n-1} \ $$ and
\begin{eqnarray*}
  k&\leq& \frac{2}{\gamma}\left(\frac{4LS}{\ell}\alpha^{M_{n-2}/p}+1\right)^{\nu+1} \\ 
& \stackrel{(\ref{e.M_j})}{\leq} & 
  \frac{2}{\gamma} \left(\frac{4LS}{\ell}M_{n-1}^{2q}+1\right)^{\nu+1} \  \stackrel{(\alpha)}{\leq} \
  M_{n-1}^{4q(\nu+1)} \ \stackrel{(\alpha)}{\ll}  \ M_n \ .
\end{eqnarray*}
We now claim that conditions $(\cX)_{n-1}$ and $(\cY'')_n$ are
satisfied for all $\tau$ with
$|\tau-\tau_0|<\frac{s}{4L}\varepsilon_{n-1}$. In order to do so, we
proceed by induction on $j$. Suppose that $(\mathcal{X})_{j-1}$ and
$(\mathcal{Y}'')_{j-1}$ hold for $\tau \in
B_{\frac{s}{4L}\eps_{n-1}}(\tau_0)$. Then we have
$|I_{j}^{\iota}(\tau)|\leq \varepsilon_j$ and $|\partial_\tau
I_j^\iota(\tau)|\leq \frac{2L}{s}$, $\iota=1,2,$ by Proposition
\ref{finite_times} if $j\geq 1$ and by assumption if $j=0$. If $d_H$
denotes the Hausdorff distance, then this implies that
$d_H(I_j^\iota(\tau),I_j^\iota(\tau_0)\leq \frac{2L}{s}\cdot
|\tau-\tau_0|$.

Thus, using $(\mathcal{X}')_{j}$ for $\tau_0$, we have for all
$l=1\ld 2K_jM_j$ and $\iota_1,\iota_2=1,2,$
\begin{eqnarray*}
  \lefteqn{d(I_j^{\iota_1}(\tau),I_j^{\iota_2}(\tau)+l\omega) }\\ & \geq &
  d(I_j^{\iota_1}(\tau_0),I_j^{\iota_2}(\tau_0)+l\omega)-d_H(I_j^{\iota_1}(\tau_0),I_j^{\iota_1}(\tau)) -
  d_H(I_j^{\iota_2}(\tau_0)+l\omega,I_j^{\iota_2}(\tau)+l\omega)\\
  &>&9\varepsilon_j-\frac{2L}{s}\cdot \frac{s\varepsilon_{n-1}}{4L}-\frac{2L}{s}\cdot
  \frac{s\varepsilon_{n-1}}{4L}  \ > \ 3\varepsilon_j \
\end{eqnarray*}
if $j\leq n-1$. Hence, $(\mathcal{X})_j$  holds for $\tau$. In a
similar way, using $(\cY')_j$ for $\tau_0$, we have that for
$\iota_1,\iota_2=1,2,\ j\leq n,\ 0\leq j' \leq j-1,$ and
$-M_{j'}\leq l\leq M_{j'}+2$
\[
  d(I_j^{\iota_1}(\tau)-(M_j-1)\omega,I_{j'}^{\iota_2}(\tau)+l\omega) \ > \
   \varepsilon_{j-1} \ ,
\]
and
\[
d(I_j^{\iota_1}(\tau)+(M_j+1)\omega,I_{j'}^{\iota_2}(\tau)+l\omega)\
> \ \varepsilon_{j-1} \ ,
\]
as long as $j\leq n$. Therefore, conditions $(\mathcal{X})_{n-1}$
and $(\mathcal{Y}'')_{n}$ hold for $\tau\in
B_{\frac{s}{4L}\eps_{n-1}}(\tau_0)$ as claimed.

Now, due to (\ref{est_relative}) the interval $I^1_n(\tau)+k\omega$ moves with
positive minimal speed $\ell/S$ relative to $I^2_n(\tau)$. Hence, it follows
from (\ref{e.close_return_creation1}) that $I^1_n(\tau)+k\omega$ moves from the
left to the right of $I_n^2(\tau)$ when $\tau$ transverses the interval
$[\tau_0,\tau_0+\frac{s}{4L}\eps_{n-1}]$. (note that $|I^\iota_n(\tau)|\leq
\eps_n$ by Proposition~\ref{finite_times} and $\eps_n\ll \eps_{n-1}$). Thus, we
can choose a subinterval $\Gamma=[\tau^-,\tau^+]\ssq
[\tau_0,\tau_0+\frac{s}{4L}\eps_{n-1}]$ that satisfies all the assertions of the
lemma.\qed\medskip

\subsection{Proof of Lemma~\ref{l.inclusions} }

We first need the following statement.
\begin{claim}\label{l.interval_disjoint}
  Let $\Gamma$ be as in Lemma~\ref{l.perturbation_closereturn}. Then for all
  $\tau\in\Gamma$ the following statement holds.
\begin{equation}\label{est_disjoint}
(J+l\omega)\cap \mathcal I_n=\emptyset,\ \ \ \ \forall\ \
l\in\{-M_n-k+1,\ldots, M_n+1\}\setminus \{-k,0\}.
\end{equation}
\end{claim}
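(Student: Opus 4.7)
Fix $\tau\in\Gamma$ and suppress it from the notation, writing $I^\iota=I_n^\iota$ with midpoint $m_\iota$, $\iota=1,2$. My strategy is to reduce the set-theoretic disjointness $(J+l\omega)\cap\cI_n=\emptyset$ to a single Diophantine lower bound on $d(m\omega,0)$ for suitable integers $m$, and then close the argument using the Diophantine hypothesis on $\omega$ together with the growth rates of $M_n$ and $\eps_n$.

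The first step is the reduction to pairwise distances. Conditions $(\cX)_{n-1}$ and $(\cY)_{n-1}$ hold by Lemma~\ref{l.perturbation_closereturn}(i), so Proposition~\ref{finite_times} gives $|I^\iota|\leq\eps_n$. Combined with statement (ii) of the same lemma and the definition \eqref{e.J_definition}, the set $J$ is a single interval consisting of all points within $4\eps_n$ of $(I^1+k\omega)\cup I^2$. Hence $(J+l\omega)\cap\cI_n=\emptyset$ is equivalent to the conjunction of the four inequalities
\[
d(I^{\iota_1},I^1+(k+l)\omega)>4\eps_n \quad\textrm{and}\quad d(I^{\iota_1},I^2+l\omega)>4\eps_n, \qquad \iota_1=1,2.
\]

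Next, I would insert the close-return identity. Lemma~\ref{l.perturbation_closereturn}(ii) together with $|I^\iota|\leq\eps_n$ yields $m_2=m_1+k\omega+\eta$ with $|\eta|\leq 5\eps_n$ in the circle metric. Substituting this into each of the four midpoint differences reduces them, modulo interval-length errors of at most $\eps_n$, to $|(k+l)\omega|$ or $|l\omega|$ shifted by $\eta$ in two of the four cases. It therefore suffices to establish $d(m\omega,0)>10\eps_n$ for $m\in\{l,k+l\}\setminus\{0\}$. The excluded values $l\in\{-k,0\}$ are precisely those at which $k+l$ or $l$ vanishes, corresponding to the trivially non-empty intersections of $J$, respectively $J-k\omega$, with $\cI_n$. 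For every remaining $l$ the remark following Lemma~\ref{l.perturbation_closereturn} gives $|m|\leq M_n+k+1\leq 2M_n$, so $\omega\in\cD(\gamma,\nu)$ yields $d(m\omega,0)\geq\gamma(2M_n)^{-\nu}$.

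Inserting the growth bounds $M_n\leq 2\alpha^{M_{n-1}/(pq)}$ from \eqref{e.M_j} and $\eps_n\leq 2\alpha^{-M_{n-1}/(2p)}/s$ from \eqref{e.eps_j} gives
\[
M_n^\nu\eps_n \ \leq \ \frac{2^{\nu+1}}{s}\,\alpha^{M_{n-1}(2\nu-q)/(2pq)},
\]
and the exponent is strictly negative because $q\geq 4\nu$; hence $\gamma(2M_n)^{-\nu}>10\eps_n$ for $\alpha$ sufficiently large, which closes the argument. The step I expect to require the most attention is this final Diophantine comparison: the Diophantine gap is only polynomially small in $M_n$ while $\eps_n$ decays super-exponentially, so the inequality is morally obvious, but the bookkeeping has to be followed carefully to see that the inequality $q\geq 4\nu$ built into the choice of scales in Theorem~\ref{t.Main_qualitative} is exactly what guarantees the correct sign of the exponent of $\alpha$.
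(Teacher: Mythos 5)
Your proposal is correct, and the overall route is the same as the paper's: reduce the disjointness to Diophantine lower bounds on $d(m\omega,0)$ for $m\in\{l,k+l\}$, bound $|m|\leq 2M_n$, and then compare the Diophantine gap $\gamma(2M_n)^{-\nu}$ to $\eps_n$ via the scale relations \eqref{e.M_j}, \eqref{e.eps_j}, using $q\geq 4\nu$ to get the right sign in the exponent of $\alpha$. The only difference is cosmetic bookkeeping. You expand into the four interval pairs explicitly and feed the close-return relation $m_2=m_1+k\omega+\eta$ (with $|\eta|\leq 5\eps_n$) into the two ``cross'' pairs to convert them back to $l\omega$ and $(k+l)\omega$. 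The paper instead observes that $J\subseteq B_{9\eps_n}(I_n^1+k\omega)\cap B_{9\eps_n}(I_n^2)$ (the $9\eps_n$ ball already absorbs the close-return slack), so it needs only the two ``diagonal'' estimates $d(I_n^1+(k+l)\omega,I_n^1)>9\eps_n$ and $d(I_n^2+l\omega,I_n^2)>9\eps_n$ — disjointness from $I_n^1$ follows from the first via $J\subseteq B_{9\eps_n}(I_n^1+k\omega)$, and disjointness from $I_n^2$ from the second via $J\subseteq B_{9\eps_n}(I_n^2)$. Both bookkeepings land on $d(m\omega,0)>\mathrm{const}\cdot\eps_n$ for $|m|\leq 2M_n$ and close identically. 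One small remark on wording: you describe the Diophantine gap as ``polynomially small in $M_n$'' versus ``super-exponential'' decay of $\eps_n$, but since $M_n\sim\alpha^{M_{n-1}/pq}$ is itself exponential in $M_{n-1}$, both quantities are exponentially small in $M_{n-1}$; the comparison genuinely hinges on the exponent sign $(2\nu-q)/(2pq)<0$, which your computation correctly isolates.
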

\proof By Lemma \ref{l.perturbation_closereturn}, we have $k\leq
M_{n-1}^{4q(\nu+1)}\ll M_n$ for $\alpha$ large, and conditions $(\cX)_{n-1},
(\cY'')_{n-1}$ hold for all $\tau\in\Gamma$. Proposition~\ref{finite_times},
implies $|I_n^\iota|\leq \varepsilon_n,\ \iota=1,2$. Suppose
$l\in\{-M_n-k+1,\ldots, M_n+1\}\setminus\{-k, 0\}$. Then $\omega\in
\cD(\gamma,\nu)$, (\ref{e.M_j}) and (\ref{e.eps_j}) yield
\begin{eqnarray*}
\lefteqn{d(I_n^1+(k+l)\omega, I_n^1) \geq d((k+l)\omega, 0)-|I_n^1| } \\
 & \geq &    
\gamma\cdot |k+l|^{-\nu}-\varepsilon_n \ > \ \gamma\cdot
(2M_n)^{-\nu}-\varepsilon_n\ \stackrel{(\alpha)}{>}\ 9\varepsilon_n \ 
\end{eqnarray*}
and
\begin{eqnarray*}
\lefteqn{d(I_n^2+l\omega, I_n^2)\geq d(l\omega, 0)-|I_n^2| } \\  & \geq&  \gamma\cdot
|l|^{-\nu}-\varepsilon_n \ > \ \gamma\cdot
(2M_n)^{-\nu}-\varepsilon_n \  \stackrel{(\alpha)}{>}\ 9\varepsilon_n \ .
\end{eqnarray*}
Since $J\subseteq
B_{9\varepsilon_n}(I_n^1+k\omega)\cap B_{9\varepsilon_n}(I_n^2)$, this
implies (\ref{est_disjoint}). \qed\medskip

Now we can turn to the proof of Lemma~\ref{l.inclusions}. Since $I_n^2\cup
(I_n^1+k\omega)\ssq J$, we have that $\mathcal A_n^2\subseteq \mathcal A'$ and
$\mathcal B_n^1\subseteq \mathcal B'$ by definition. Thus, it will be sufficient
to show that
$$f^{-k}(\mathcal B_n^2)\ \subseteq \ \mathcal B'\ \ \textrm{and}\ \
f^k(\mathcal A_n^1) \ \subseteq \ \mathcal A'.$$

As for $\mathcal B_n^2$, we have $\left(I_n^2+(M_n+1)\omega\right)\cap \mathcal
V_{n-1}^+=\emptyset$ by condition $(\cY'')_{n}$. Moreover, $(\cY'')_n$ together
with $d(I_n^1+(M_n+1)\omega, I_n^2+(M_n-k+1)\omega)\leq 4\varepsilon_n\ll
\eps_{n-1}$ yields $(I_n^2+(M_n-k+1)\omega)\cap \mathcal
W_{n-1}^+=\emptyset$. Moreover, due to Claim~\ref{l.interval_disjoint} we have
that $k$ is the first integer such that $J-k\omega$ intersects $\cI_n$. Thus, we
can apply Lemma \ref{lem-ess_back} to obtain $f^{-k}(\mathcal B_n^2)\subseteq
\mathcal B'$.

Similarly, we have $\left(I_n^1-(M_n-1)\omega\right)\cap \mathcal
V_{n-1}^-=\emptyset$ by $(\cY'')_n$, and the fact that $d(I_n^2-(M_n-1)\omega,
I_n^1-(M_n-k-1)\omega)\leq 4\varepsilon_n\ll\eps_{n-1}$ together with $(\cY'')_n$
also imply $(I_n^1-(M_n-k-1)\omega)\cap \mathcal W^+_{n-1}=\emptyset$. Thus
Claim~\ref{l.interval_disjoint} combined with Lemma \ref{lem-ess} yield
$f^k(\mathcal A_n^1)\subseteq \mathcal A'$. \qed\medskip

\subsection{Proof of Lemma~\ref{l.preimage_shape} }

For the proof, we first need the following statement.

\begin{claim} \label{c.strict_nestedness}
Let $\hat f$ satisfy the assertion of Theorem
\ref{t.Main_qualitative} and assume $\tau\in \Lambda_{n-1}^{\hat
f}$. Then for $n\geq 3$, we have
\begin{equation}\label{claimresult-1}
f^{M_{n-1}}(\mathcal A_{n-1}^\iota)\subseteq \T^1\times E,\ \
\iota=1,2,
\end{equation}
\begin{equation}\label{claimresult-2}
 B_{9\varepsilon_n}(I_n^\iota)\subseteq I_{n-1}^\iota,\
\ \iota=1,2.
\end{equation}
\end{claim}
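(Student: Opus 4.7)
Both assertions of the claim will be established under the slow-recurrence hypotheses $(\cX)_{n-1}$ and $(\cY)_{n-1}$ provided by $\tau\in\Lambda_{n-1}^{\hat f}$, using the quantitative tools of Section~\ref{Preliminaries}.

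The plan for \eqref{claimresult-1} is as follows. Fix $(\theta_0,x_0)\in\cA_{n-1}^\iota$, so $\theta_0\in I_{n-1}^\iota-(M_{n-1}-1)\omega$ and $x_0\in C$. First I verify the hypothesis $(\mathcal{B}1)_{n-1}$ of Lemma~\ref{lem-ess}: $x_0\notin\inte(E)$ is automatic from $C\cap E=\emptyset$, and $\theta_0\notin\cV_{n-2}^-$ follows from $(\cY)_{n-1}$ together with the inclusion $\cV_{n-2}^-\subseteq\cY_{n-2}$. Condition $(\cX)_{n-1}$ then forces $\cL=M_{n-1}-1$ to be the first return of $\theta_m$ to $\cI_{n-1}$, so Lemma~\ref{lem-ess} yields $x_{M_{n-1}-1}\in\inte(C)$, and in fact the entire set $f^{M_{n-1}-1}(\cA_{n-1}^\iota)$ is contained in $\cI_{n-1}^\iota\times\inte(C)$. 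Lemma~\ref{estimate.forward}, applied to the two constant arcs $\phi\equiv c^\pm$ with $N=M_{n-1}-1$ and $I=I_{n-1}^\iota-(M_{n-1}-1)\omega$, then bounds the fibrewise vertical diameter of this image by $\alpha^{-(M_{n-1}-1)/p}|C|$ via \eqref{con-e2}, while the slope estimates \eqref{equ-5}/\eqref{equ-6} constrain the boundary arcs of $f^{M_{n-1}}(\cA_{n-1}^\iota)$ to lie almost straight with slope of absolute value in $[s/2,2S]$. The crucial localization step is then to produce an anchor: since $\cC_{n-1}^\iota\neq\emptyset$ by the multiscale construction of Theorem~\ref{t.Main_qualitative}, there exists $(\theta_0^*,x_0^*)\in\cA_{n-1}^\iota$ whose $M_{n-1}$-th iterate lies in $\cB_{n-1}^\iota\subseteq\kreis\times E$; combining this anchor with the fibrewise diameter bound $\alpha^{-(M_{n-1}-1)/p+p}|C|\ll|E|$ and the horizontal $\theta$-oscillation of the band, bounded by $2S\eps_{n-1}\ll|E|$ across $I_{n-1}^\iota+\omega$, pins the whole image inside $\kreis\times E$.

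For \eqref{claimresult-2}, the inclusion $I_n^\iota\subseteq I_{n-1}^\iota$ is immediate from $\cI_n=\inte(\pi_1(\cC_{n-1}))\subseteq\cI_{n-1}$, so the task is to bound $d(I_n^\iota,\partial I_{n-1}^\iota)$ from below by $9\eps_n$. Proposition~\ref{finite_times} supplies $|I_n^\iota|\leq\eps_n$. To position $I_n^\iota$ strictly interior to $I_{n-1}^\iota$, I will use the transversal geometry of Figure~\ref{Fig.SNA}(a): by the slope estimates \eqref{equ-5}/\eqref{equ-6} from Lemma~\ref{estimate.forward} and their backward analogues from Lemma~\ref{estimate.back}, the forward strip $f^{M_{n-1}-1}(\cA_{n-1}^\iota)$ enters and exits the horizontal band $\kreis\times E$ at $\theta$-values whose distance from $\partial I_{n-1}^\iota$ is controlled from below by a quantity of order $|C|/S$ (up to the contracted error $\alpha^{-(M_{n-1}-1)/p+p}|C|$), and $I_n^\iota$ is confined inside this crossing interval because $\cC_{n-1}^\iota$ is its intersection with the backward horizontal strip from Lemma~\ref{estimate.back}. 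Since $\eps_n$ is super-exponentially smaller than these geometric quantities by \eqref{e.eps_j}, the buffer $9\eps_n$ fits easily.

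The main obstacle lies in \eqref{claimresult-1}, namely in promoting the existence of a single anchor point with iterate in $\kreis\times E$ to the inclusion of the entire image $f^{M_{n-1}}(\cA_{n-1}^\iota)$ in $\kreis\times E$. This amounts to verifying that the total vertical oscillation of the almost-straight slanted band across its $\theta$-extent $I_{n-1}^\iota+\omega$ stays comfortably smaller than $|E|$; the hypothesis $n\geq 3$ is what makes this work, since it ensures that enough contraction has accumulated over $M_{n-1}-1$ iterates so that the fibrewise diameter is negligible and that $\eps_{n-1}$ is small enough that the horizontal contribution $2S\eps_{n-1}$ remains well below $|E|$.
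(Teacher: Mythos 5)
Your plan for \eqref{claimresult-1} has the right ingredients (Lemma~\ref{lem-ess} to confine the first $M_{n-1}-1$ iterates to $\kreis\times C$, Lemma~\ref{estimate.forward} for the fibrewise diameter and slope, an anchor coming from $\cC_{n-1}\neq\emptyset$), but the localization step contains a genuine gap. Knowing that one anchor point of $f^{M_{n-1}}(\cA_{n-1}^\iota)$ lies in $\kreis\times E$, that the fibrewise diameter is $\ll|E|$, and that the band oscillates by at most $2S\eps_{n-1}\ll|E|$ over $I_{n-1}^\iota+\omega$ does \emph{not} by itself pin the whole image inside $\kreis\times E$: if the anchor sat near $\partial E$, an oscillation far smaller than $|E|$ could still carry the band out. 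What is needed is a quantified \emph{depth} of the anchor inside $E$. The paper obtains this by first showing $\hat\cB_{n-1}^\iota\ssq\cdots\ssq\hat\cB_1^\iota\ssq f^{-M_0}(\cB_0^\iota)$ via Lemma~\ref{lem-ess_back}, and then, since $f^{-M_0}\bigl((I_0^\iota+(M_0+1)\omega)\times\closure(\kreis\smin C)\bigr)\ssq\kreis\times E$ already, using the global derivative bound \eqref{eq:bounds1} to deduce $d\bigl(\kreis\smin E,\,(\hat\cB_1^\iota)_\theta\bigr)\geq\alpha^{-M_0 p}\,d(C,E)$; this depth is the dominant quantity against which the oscillation $2S\eps_{n-1}$ and the diameter are compared. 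Your proposal never produces this. A small slip on the same point: the $M_{n-1}$-th iterate of your anchor lands in $\hat\cB_{n-1}^\iota=f^{-M_{n-1}}(\cB_{n-1}^\iota)$, not in $\cB_{n-1}^\iota$ itself (the latter, a full rectangle $\times E$ with no depth information, is only reached after $2M_{n-1}+1$ steps). Incidentally, the paper avoids the general-$n$ bookkeeping entirely by proving $\tilde\cA_{n-1}^\iota\ssq\cdots\ssq\tilde\cA_2^\iota$ and reducing everything to the case $n=3$, which is tidier than the direct route.

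For \eqref{claimresult-2} the proposed mechanism cannot be right. You claim that the crossing interval positions $I_n^\iota$ at distance ``of order $|C|/S$'' from $\partial I_{n-1}^\iota$, but for $n\geq 2$ one already has $I_n^\iota\ssq I_{n-1}^\iota$ with $|I_{n-1}^\iota|\leq\eps_{n-1}$, so the margin $d(I_n^\iota,\partial I_{n-1}^\iota)$ is at most $\eps_{n-1}/2$, which is super-exponentially smaller than $|C|/S$. The actual margin comes from an entirely different mechanism that your outline does not mention. By definition of $I_{n-1}^\iota$, the boundary curves of $\tilde\cA_{n-2}^\iota$ and $\hat\cB_{n-2}^\iota$ \emph{touch} at the endpoints of $I_{n-1}^\iota+\omega$ (the paper's \eqref{esti-claim-1}--\eqref{esti-claim-2}). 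The margin is produced by a strict shrinkage estimate from level $n-2$ to level $n-1$: on $I_{n-1}^\iota+\omega$ one shows
\[
\bigl|\varphi_{\tilde\cA_{n-1}^\iota}^\pm(\theta)-\varphi_{\tilde\cA_{n-2}^\iota}^\pm(\theta)\bigr|\ \geq\ \alpha^{-p(M_{n-2}+1)}\,d(C,E),
\]
and the analogous bound for $\hat\cB$, so that at the endpoints of $I_{n-1}^\iota+\omega$ the curves $\varphi^\mp_{\tilde\cA_{n-1}^\iota}$ and $\varphi^\pm_{\hat\cB_{n-1}^\iota}$ are separated by $\geq 2\alpha^{-p(M_{n-2}+1)}d(C,E)$. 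Dividing by the slope ceiling $\sim S$ and using \eqref{e.M_j}--\eqref{e.eps_j}, this forces $a_n^\iota-a_{n-1}^\iota\geq\alpha^{-2pM_{n-2}}>9\eps_n$ (and similarly at the other endpoint), precisely because $M_{n-1}\gg M_{n-2}$. Without this level-$(n{-}2)$-to-$(n{-}1)$ shrinkage argument, there is no mechanism in your proof to keep $I_n^\iota$ away from $\partial I_{n-1}^\iota$.
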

\begin{proof} The proof is illustrated in Figure~\ref{f.claim_inclusion_E}. We
  let
  \[
  I_j^\iota=(a_j^\iota, b_j^\iota), \ \  \tilde
  \cA_j^\iota=f^{M_j}(\cA_j^\iota),\ \ \hat
  \cB_j^\iota=f^{-M_j}(\cB_j^\iota),\ \ \iota=1,2,\ j=0,1,\ldots, n-1.
  \]
  We have $\cI_{j}-(M_{j}-1)\omega \cap \cV^-_{j-1} =
  \emptyset$ and $\cI_{j}-(M_{j-1}-1)\omega\cap \cW^+_{j-1}=\emptyset$ for all
  $j=0\ld n-1$ by $(\cY)_{n-1}$. Hence, Lemma~\ref{lem-ess} yields
  $f^{M_{j}-M_{j-1}}(\cA^\iota_{j})\ssq\cA^\iota_{j-1}$ for all $j=0\ld n-1$
  and therefore $\tilde\cA^\iota_{n-1}\ssq
 \tilde\cA^\iota_{n-2}\ssq \ldots \ssq \tilde\cA_2^\iota$. Thus, it
  suffices to prove (\ref{claimresult-1}) for the case $n=3$.
   \begin{figure}[ht!]
  \begin{center}
 \includegraphics[height=0.7\linewidth]{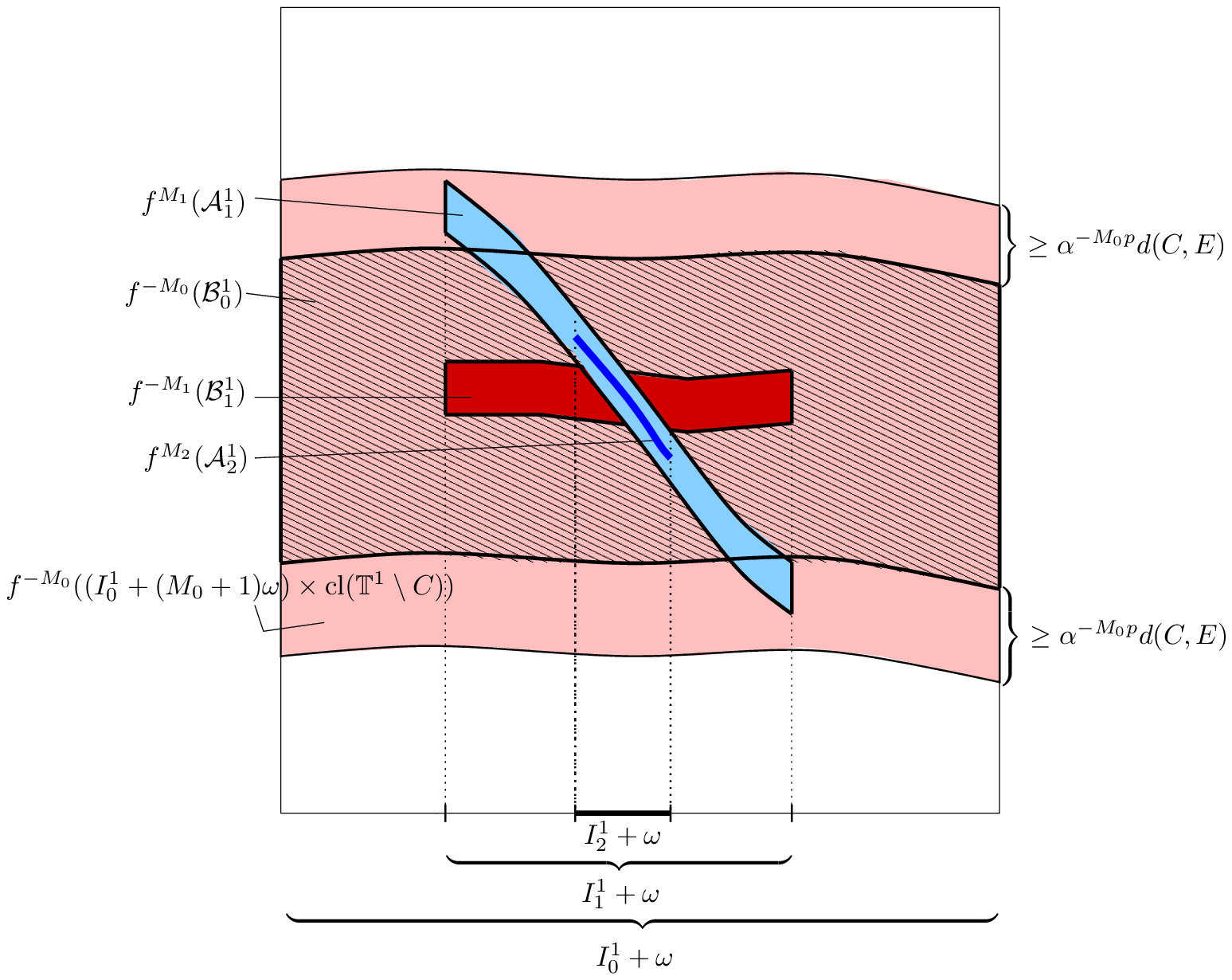}
  \end{center}

\caption{\small  Proof of the inclusion \eqref{claimresult-1} in Claim \ref{c.strict_nestedness}: Position of $f^{M_2}(\cA_2^1)$. Since this
  set is close to $f^{-M_1}(\cB^1_1)$, which lies well inside the expanding region, 
 we obtain $f^{M_2}(\cA_2^1)\ssq \kreis\times E$ as well. 
 \label{f.claim_inclusion_E}}

\end{figure}
A similar argument with Lemma~\ref{lem-ess_back} for the backwards iteration
yields $f^{-(M_1-M_0)}(\cB^\iota_1)\ssq \cB^\iota_0$.  Since
$f^{-M_0}(\cB^\iota_0) \ssq
f^{-M_0}((I^\iota_0+(M_0+1)\omega)\times\textrm{cl}(\kreis\smin C)) \ssq
\kreis\times E$ by (\ref{eq:Cinvariance}) and $(\cX)_0$, we can use
(\ref{eq:bounds1}) to obtain
\[
   d(\kreis\smin E,\hat \cB_{1,\theta}^\iota) \ \geq \ \alpha^{-M_0p} d(C,E)
\]
for all $\theta\in I^\iota_1+\omega$ (see Figure \ref{f.claim_inclusion_E}).
Moreover, by Lemma~\ref{estimate.forward}, we have that
\[
|\varphi_{\tilde{\mathcal
A}_1^\iota}^+(\theta)-\varphi_{\tilde{\mathcal
  A}_1^\iota}^-(\theta)| \ \leq \ \alpha^{-M_1/p}|C| \
\stackrel{(\alpha)}{<} \ \alpha^{-M_0p}d(E,C) \ .
\]
Therefore, by definition of
$I^\iota_2=\inte\left(\pi_1\left(\hat\cB^\iota_1\cap\tilde\cA^\iota_1\right)\right)$,
this yields
$$f^{M_2}(\mathcal
A_2^\iota) \ \ssq \ f^{M_1}\left((I_2^\iota-(M_1-1)\omega)\times
  C\right)\subseteq \T^1\times E \ .$$ This proves (\ref{claimresult-1}).

As for (\ref{claimresult-2}), we first consider the case $\iota=1$ and verify
that ${\tilde \cA}_{j}^1$ `crosses' ${\hat \cB}_{j}^{1}$ `downwards' for all
$j=0,1,\ldots,n-1$. Since $( I_j^1-(M_j-1)\omega)\cap\cV_{j-1}^-=\emptyset,
I_j^1\ssq \cI_{j-1}$ and $\forall \ l=0,1,\ldots, M_j-2, (
I_j^1-(M_j-1-l)\omega)\cap \cI_j=\emptyset$ for all $j=0,1,\ldots, n-1$ by
$(\cX)_{n-1}, (\cY)_{n-1}$, Lemma \ref{estimate.forward} implies that 
\[
-S-\frac{S}{\alpha^{1/p}-1}\ \leq\ \partial_\theta \varphi_{\tilde
{\cA}_j^1}^\pm(\theta) \ < \ -s+\frac{S}{\alpha^{1/p}-1}.
\]
A similar argument with Lemma \ref{estimate.back} for the backwards iteration
yields
\[
|\partial_\theta\varphi_{\hat{\cB}_j^1}^\pm(\theta)|\leq
\frac{S}{\alpha^{1/p}-1}, \ \ \forall \ j=0, 1,\ldots, n-1.
\]
Therefore, for any $\iota_1,\iota_2\in\{\pm\}, j=0,1,\ldots, n-1$,
we have
\begin{itemize}
\item[(i)] \  $-2S\stackrel{(\alpha)}{<}-S-\frac{2S}{\alpha^{1/p}-1}\leq
\partial_\theta(\varphi_{\tilde{\cA}_j^1}^{\iota_1}(\theta)-\varphi_{\hat{\cB}_{j}^1}^{\iota_2}(\theta))
\leq -s+\frac{2S}{\alpha^{1/p}-1}\stackrel{(\alpha)}{<}-s/2.$
\end{itemize}
Moreover, $\tilde{\cA}_0^1$ is above $\hat{\cB}_{0}^1$ at the left end of
$I_0^1+\omega$ and below at the right end by (\ref{eq:crossing}) and
(\ref{eq:s}), since $\tilde \cA_0^1\ssq f(I_0^1\times C)$ and $\hat \cB_0^1\ssq
(I_0^1+\omega)\times E$ by (\ref{eq:Cinvariance}) and $(\cX)_0$. Since $\tilde
\cA_{n-1}^1\ssq \cdots \ssq \tilde \cA_0^1$ and $\hat \cB_{n-1}^1\ssq \cdots\ssq
\hat \cB_0^1$ by $(\cX)_{n-1}, (\cY)_{n-1}$ and Lemma \ref{lem-ess},
\ref{lem-ess_back}, the definition of $I_j^1$ yields
\begin{itemize}
\item[(ii)]   $\tilde{\cA}_j^1$ is above $\hat{\cB}_{j}^1$ at
the left end of $I_j^1+\omega$ and below at the right end. (see
Figure \ref{f.claim_cross})
\end{itemize}
Thus, (i) and (ii) ensure that $\tilde \cA_j^1$ `crosses' $\hat\cB_j^1$
`downwards' (and give a precise meaning to this statement).  Hence, by 
definition of $I_{j+1}^1$, we have
\begin{equation}\label{esti-claim-1}
\varphi_{\tilde{\mathcal
A}_{j}^1}^-(a_{j+1}^1+\omega)-\varphi_{\hat{\mathcal
B}_{j}^1}^+(a_{j+1}^1+\omega)=0,\ \ j=0,1,\ldots,n-1,
\end{equation}
\begin{equation}\label{esti-claim-2}
\varphi_{\tilde{\mathcal
A}_{j}^1}^+(b_{j+1}^1+\omega)-\varphi_{\hat{\mathcal
B}_{j}^1}^-(b_{j+1}^1+\omega)=0,\ \ j=0,1,\ldots,n-1 \ .
\end{equation}
Further, we have $(I_{n-1}^1-(M_{n-1}-1)\omega)\cap \cV_{n-2}^-=\emptyset$ and
$(I_{n-1}-M_{n-2}\omega)\cap \cW^+_{n-2}=\emptyset$ by $(\cY)_{n-1}$, so that
Lemma \ref{lem-ess} implies $f^{M_{n-1}-M_{n-2}-1}(\mathcal A_{n-1}^1)\subseteq
\T^1\times C$.  Moreover, since $( I_{n-1}^1-M_{n-2}\omega)\cap \mathcal
I_0=\emptyset$ by $(\cY)_{n-1}$, we also have $f((
I_{n-1}^1-M_{n-2}\omega)\times \textrm{cl}(\T^1\setminus E))\subseteq \T^1\times
C$. Thus, for $\theta\in I_{n-1}^1+\omega$,
$$|f_{\theta-M_{n-1}\omega}^{M_{n-1}-M_{n-2}}(c^\pm)-c^\pm|\geq \alpha^{-p}d(C,E).$$
Combined with (\ref{eq:bounds1}) this means that if $\theta\in
I_{n-1}^1+\omega$, then 
\begin{eqnarray*}
  \lefteqn{ |\varphi_{\tilde{\mathcal
        A}_{n-1}^1}^{\pm}(\theta)-\varphi_{\tilde{\mathcal
        A}_{n-2}^1}^{\pm}(\theta)|
    \ = \ |f_{\theta-M_{n-2}\omega}^{M_{n-2}}(f_{\theta-M_{n-1}\omega}^{M_{n-1}-M_{n-2}}(c^\pm))
    -f_{\theta-M_{n-2}\omega}^{M_{n-2}}(c^\pm)| } 
    \\  &\geq&
    \alpha^{-pM_{n-2}}|f_{\theta-M_{n-1}\omega}^{M_{n-1}-M_{n-2}}(c^\pm)-c^\pm| \ \geq \alpha^{-p(M_{n-2}+1)}d(C,E).
\end{eqnarray*}
Similarly, given $\theta\in I_{n-1}^1+\omega$ we have 
$$|\varphi_{\hat{\mathcal
B}_{n-1}^\iota}^{\pm}(\theta)-\varphi_{\hat{\mathcal
B}_{n-2}^\iota}^{\pm}(\theta)|\geq \alpha^{-p(M_{n-2}+1)}d(C,E).$$
This yields 
\begin{eqnarray}\label{esti-claim-3}
\nonumber\lefteqn{\varphi_{\tilde{\mathcal
A}_{n-1}^1}^-(a_{n-1}^1+\omega)-\varphi_{\hat{\mathcal
B}_{n-1}^1}^+(a_{n-1}^1+\omega)} \\ & = &\varphi_{\tilde{\mathcal
A}_{n-1}^1}^-(a_{n-1}^1+\omega)-\varphi_{\tilde{\mathcal
A}_{n-2}^1}^-(a_{n-1}^1+\omega) \\ \nonumber &  & + \ \varphi_{\hat{\mathcal
B}_{n-2}^1}^+(a_{n-1}^1+\omega)-\varphi_{\hat{\mathcal
B}_{n-1}^1}^+(a_{n-1}^1+\omega)\\ \nonumber &\geq&
2\alpha^{-p(M_{n-2}+1)}d(C,E),
\end{eqnarray}
(see Figure \ref{f.claim_cross} with $j=n$) and similarly
\begin{equation}\label{esti-claim-4}
\varphi_{\tilde{\mathcal
A}_{n-1}^1}^+(b_{n-1}^1+\omega)-\varphi_{\hat{\mathcal
B}_{n-1}^1}^-(b_{n-1}^1+\omega)\leq -2\alpha^{-p(M_{n-2}+1)}d(C,E).
\end{equation}
Thus, by (i), (\ref{esti-claim-1}) with $j=n-1$, (\ref{esti-claim-3}),
(\ref{e.M_j}) and (\ref{e.eps_j}), we obtain
$$a_n^1-a_{n-1}^1\ \geq\  \frac{d(C,E)}{S}\alpha^{-p(M_{n-2}+1)} \ \geq\  \alpha^{-2pM_{n-2}}
\stackrel{(\alpha)}{\geq}\ 
\varepsilon_{n-1}^{8p^2}\stackrel{(\alpha)}{>}9\varepsilon_n.$$
Similarly, (i), (\ref{esti-claim-2}) with $j=n-1$,
(\ref{esti-claim-4}), (\ref{e.M_j}) and (\ref{e.eps_j}) yield
$$b_{n-1}^1-b_n^1 \ > \ 9\varepsilon_n.$$
For the intervals $I_{n-1}^2$ and $I_n^2$ the situation is exactly the same,
except for the fact that $\tilde{\cA}_j^2$ crosses $\hat{\cB}_j^2$ upwards
instead of downwards.
\end{proof}
\medskip

 \begin{figure}[ht!]
  \begin{center}
 \includegraphics[height=0.7\linewidth]{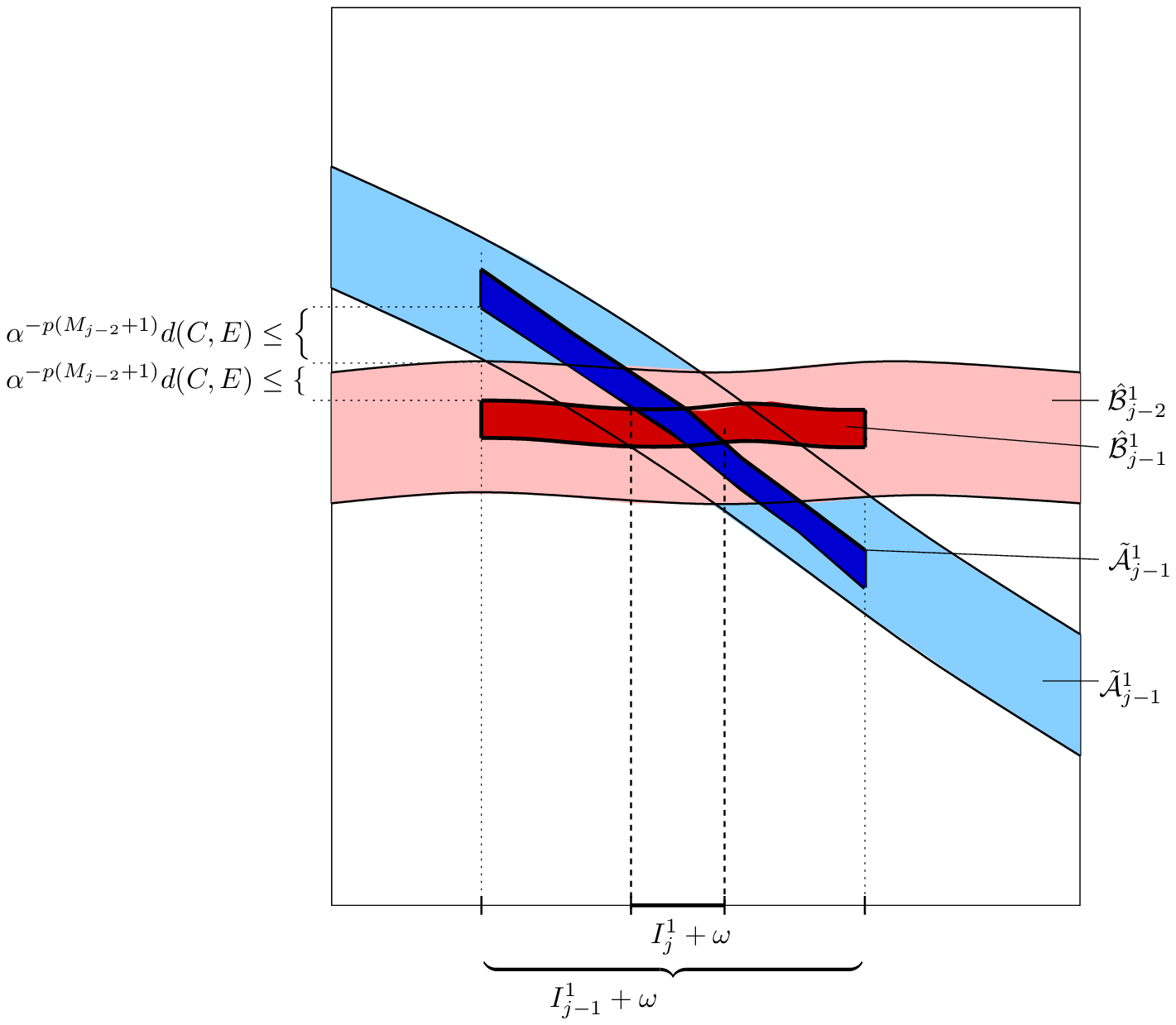}
  \end{center}

\caption{\small  Proof of Claim~\ref{c.strict_nestedness}: 
The `downwards' crossing between $\tilde \cA_j^1$ and
$\hat \cB_j^1$.
\label{f.claim_cross}}

\end{figure}%

We now turn to the proof of Lemma~\ref{l.preimage_shape}.  For $\tau\in \Gamma$,
by Lemma \ref{l.perturbation_closereturn} and Proposition \ref{finite_times}, we
have that $(\cX)_{n-1}, (\cY'')_n$ are satisfied and $|I_n^1(\tau)|,
|I_n^2(\tau)| \leq \varepsilon_n$.

Due to the definition of $J$, we have
\begin{equation}\label{equ-c-4-1}
J\subseteq B_{9\varepsilon_n}(I_n^1+k\omega)\cap
B_{9\varepsilon_n}(I_n^2),
\end{equation}
which implies $ J+(M_n-k+1)\omega\subseteq
B_{9\varepsilon_n}(I_n^1+(M_n+1)\omega)$. Since $9\varepsilon_n\leq
\varepsilon_{n-1}$, $(\cY'')_n$ yields
$$(J+(M_n-k+1)\omega)\cap \cV_{n-1}^+=\emptyset.$$ Condition
$(\cX)_{n-1}$ and (\ref{claimresult-2}), (\ref{equ-c-4-1}) also
imply that
$$(J+\omega)\cap \cW^-_{n-1}=\emptyset.$$ Therefore, by (\ref{est_disjoint}),  Lemma
\ref{lem-ess_back} implies that $\mathcal B''\subseteq (J+\omega)\times
E$. Moreover, since $(J+\omega)\subseteq I_{n-1}^2+\omega$ by
(\ref{claimresult-2}) and (\ref{equ-c-4-1}), and
$$\big(J+(M_n-k+1)\omega-l\omega\big)\cap (\mathcal I_n+\omega)=\emptyset\ \ \textrm{for}\ \ 0\leq l\leq M_n-k-1,$$
by (\ref{est_disjoint}), we can apply Lemma \ref{estimate.back} with $N=M_n-k$
and $\phi(\theta)=e^\pm$ to obtain that for any $\theta\in J+\omega$,
$$|\mathcal B_\theta''|\ \leq \ |E| \alpha^{-\frac{M_n-k}{p}},\
\textrm{and}\ \
$$
$$|\partial_\theta \varphi_{\mathcal B''}^\pm(\theta)|\ \leq \ \sum_{l=1}^{M_n-k}
\alpha^{-\frac{l}{p}}S\leq \frac{1}{\alpha^{1/p}-1}S.$$ \qed\medskip

\subsection{Proof of Lemma \ref{l.A'image_shape} }\label{s.A'}

For $\tau\in \Gamma$, by Lemma \ref{l.perturbation_closereturn}, we
have that $(\cX)_{n-1}$, $(\cY'')_n$ hold and $d(I_n^1+k\omega,
I_n^2)\leq 4\varepsilon_n$. Let $\mathcal A'''=f^{M_n-k}(\mathcal
A')$. Then for $\theta\in J+\omega$, we have $$\varphi_{\mathcal
A''}^\pm(\theta)=f_{\theta-k\omega}^k(\varphi_{\mathcal
A'''}^\pm(\theta-k\omega)).$$ We first derive the estimates for the
shape of $\mathcal A'''$. Since $(J-k\omega)\subseteq I_{n-1}^1$ by
(\ref{claimresult-2}) and (\ref{equ-c-4-1}),
$\big(J-(M_n-1)\omega+l\omega\big)\cap \mathcal{I}_{n}=\emptyset$
for $0\leq l\leq M_n-k-2$ by (\ref{est_disjoint}), and
$(J-(M_n-1)\omega)\cap \mathcal Y_{n-1}=\emptyset$ by $(\cY'')_n$
and (\ref{equ-c-4-1}). Therefore, we can apply Lemma \ref{estimate.forward} to obtain that 
\begin{equation}\label{esti-A'''-1}
 |\mathcal A_{\theta}'''|\leq \alpha^{-\frac{M_n-k}{p}}|C|,
\end{equation}
\begin{equation}\label{esti-A'''-2}
-S-\frac{S}{\alpha^{1/p}-1}  \leq  \partial_\theta\varphi_{\mathcal
A'''}^\pm(\theta)\leq -s+\frac{S}{\alpha^{1/p}-1}
\end{equation}
for all $\theta\in J-(k-1)\omega$.

Now in order to obtain the required estimates on $\mathcal A''$, we
let $\phi^\iota(\theta-k\omega)=\varphi_{\mathcal
A'''}^\pm(\theta-k\omega)$ $(\iota=1,2)$  for $\theta\in J+\omega$.
Then Remark \ref{esti-comm} yields
$$|\mathcal A_\theta''|\ \leq \ \alpha^{pk}\alpha^{-\frac{M_n-k}{p}}|C|,$$
$$|\partial_\theta \varphi_{\mathcal A''}^\pm(\theta)|\ \leq\  \frac{2\alpha^{p(k+1)}S}{\alpha^p-1}.$$
\qed\medskip

\subsection{Proof of Lemma \ref{l.D_shape} }

%For $\tau\in\Gamma$, Lemma \ref{l.perturbation_closereturn} implies
%that $(\cX)_{n-1}$ and $(\cY'')_n$ hold.

Since $J-(k-M_{n-1}-1)\omega\subseteq I_{n-1}^1+(M_{n-1}+1)\omega$
and $J-(M_{n-1}-1)\omega\subseteq I_{n-1}^2-(M_{n-1}-1)\omega$ by
(\ref{claimresult-2}) and (\ref{equ-c-4-1}), conditions
$(\cX)_{n-1}$ and $(\cY'')_{n-1}$ imply that
\begin{equation}\label{equ-c-6-1}
(J-(k-M_{n-1}-1)\omega)\cap \mathcal V_{n-1}^-=\emptyset,
\end{equation} and
\begin{equation}
(J-(M_{n-1}-1)\omega)\cap \mathcal W^+_{n-1}=\emptyset.
\end{equation}
Then by (\ref{est_disjoint}) and Lemma \ref{lem-ess}, we have
$f^{k-2M_{n-1}}(\mathcal D)\subseteq (J-(M_{n-1}-1)\omega) \times C$. Combined
with (\ref{claimresult-1}), this yields $$\mathcal D'\subseteq
f^{M_{n-1}}(\mathcal A_{n-1}^2)\subseteq (I_{n-1}^2+\omega)\times E.$$

Because $(J-(k-M_{n-1}-1)\omega+l\omega)\cap \mathcal I_n=\emptyset$ for $0\leq
l\leq k-M_{n-1}-2$ by (\ref{est_disjoint}), and $J\subseteq I_{n-1}^2$ by
(\ref{claimresult-2}) and (\ref{equ-c-4-1}), we can apply Lemma
\ref{estimate.forward} with $I=J-(k-M_{n-1}-1)\omega,\ N=k-M_{n-1}-1,\
\phi^\iota=c^\pm$ $ (\iota=1,2)$, together with (\ref{equ-7}), to obtain that
for all $\theta\in J+\omega$
$$|C|\cdot\alpha^{-p(k-M_{n-1})}\ \leq\  |\mathcal D_\theta'|\ \leq\  |C|\cdot\alpha^{-\frac{k-M_{n-1}}{p}},$$
$$s-\frac{S}{\alpha^{1/p}-1}\ \leq  \ \partial_\theta\varphi_{\mathcal D'}^\pm(\theta)\
\leq\ S+\frac{S}{\alpha^{1/p}-1}.$$
\qed\medskip

\subsection{Proof of Lemma \ref{l.P_0} }\label{s.P_0}

As before, we fix $\tau\in\Gamma$ such that assertions $(i)$-$(iii)$ of Lemma
\ref{l.perturbation_closereturn} hold.

Since $\mathcal A'' \cap \mathcal D'=f^{k}\big(f^{M_n-k}(\mathcal
A')\cap f^{-M_{n-1}}( \mathcal D) \big)$, we have
$\pi_1\left(\cA''\cap \cD'\right)=\pi_1\big(f^{M_n-k}(\mathcal
A')\cap f^{-M_{n-1}}( \mathcal D) \big)+k\omega$. In the following,
we will focus on $f^{M_n-k}(\mathcal A')\cap f^{-M_{n-1}}( \mathcal
D)$.  We let $\mathcal A'''=f^{M_n-k}(\cA')$ as before and set
$$\mathcal D_\iota=\left(J-(k-M_{n-1}-1)\omega\right)\times
X_\iota,\ \ \
  \hat {\mathcal D}_\iota=f^{-M_{n-1}}(\mathcal
D_\iota),\ \ \iota=1,2,3,c,
$$
where $X_1=[c^+, e^-], X_2=E, X_3=[e^+, c^-]$ and $X_c=[c^+, c^-]$.
Note that $\varphi_{\hat
\cD_c}^\pm(\theta)=\varphi_{f^{-M_{n-1}}(\cD)}^\mp(\theta)$ for
$\theta\in J-(k-1)\omega$.

We will first prove that $\cA'''$ crosses $\hat{\cD}_c$ exactly once and this
crossing is downwards (see Figure \ref{f.P_0}).
   \begin{figure}[ht!]
  \begin{center}
 \includegraphics[height=0.6\linewidth]{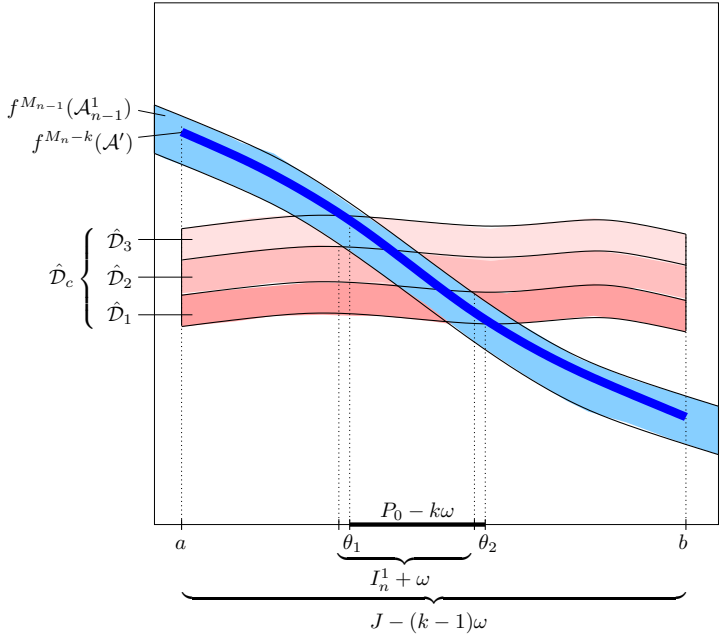}
  \end{center}

\caption{\small Proof of Lemma~\ref{l.P_0}: The definition of $P_0$.
  \label{f.P_0}}

\end{figure}
  The reason
is as follows. Since $J-(k+M_{n-1}-1)\omega\subseteq
I_{n-1}^1-(M_{n-1}-1)\omega$ by (\ref{claimresult-2}) and
(\ref{equ-c-4-1}),  and $(I_{n-1}^1-(M_{n-1}-1)\omega)\cap \mathcal
W^-_{n-1}=\emptyset$ by conditions $(\cX)_{n-1},\ (\cY'')_{n-1}$,  we
have $(J-(k+M_{n-1}-1)\omega)\cap \mathcal W^-_{n-1}=\emptyset$.
Together with (\ref{est_disjoint}) and the fact that
$(J-(M_n-1)\omega)\cap \mathcal Y_{n-1}=\emptyset$ by
(\ref{equ-c-4-1}) and $(\cY'')_{n}$, Lemma \ref{lem-ess} implies
$f^{M_n-k-M_{n-1}}(\mathcal A')\subseteq \mathcal A_{n-1}^1$ and
hence
$$\cA''' \ \subseteq \ f^{M_{n-1}}(\mathcal A_{n-1}^1).$$

Recall that $I_n^\iota=(a_n^\iota, b_n^\iota), \iota=1,2$.  By the
definition of $I_n^1$, we have
\begin{equation}\label{e.graph_equality}
\varphi_{\cA'''}^-(a_n^1+\omega)\geq
\varphi_{f^{M_{n-1}}(\cA_{n-1}^1)}^-(a_n^1+\omega) \ = \ \varphi_{\hat{\cD}_3}^-(a_n^1+\omega)\ .
\end{equation}
Since $J-(k-M_{n-1}-1)\omega\ssq I_{n-1}^1+(M_{n-1}+1)\omega$ and
$J-(k-1)\omega\ssq I_{n-1}^1+\omega$ by (\ref{claimresult-2}) and
(\ref{equ-c-4-1}) and $(J-(k-M_{n-1}-1)\omega-l\omega)\cap
(\cI_n+\omega)=\emptyset$ for $l=0,1,\ldots, M_{n-1}-1$ by (\ref{est_disjoint}),
we can apply Lemma \ref{estimate.back}, together with (\ref{equ-7}) to obtain
that
 $$|X_\iota|\cdot\alpha^{-pM_{n-1}}\ \leq\ |\hat{\cD}_{\iota,\theta}|\ \leq\ |X_\iota|\cdot\alpha^{-\frac{M_{n-1}}{p}} ,\ \ \
|\partial_\theta\varphi_{\hat{\cD}_\iota}^\pm(\theta)|\ \leq\
\frac{S}{\alpha^{1/p}-1},$$ 
for all $\theta\in J-(k-1)\omega$ and $\iota=1,2,3,c$.

Writing $J-(k-1)\omega=:[a,b]$ and using (\ref{esti-A'''-1}) and
(\ref{esti-A'''-2}), we obtain
\begin{eqnarray*}
\lefteqn{\varphi_{\cA'''}^-(a)-\varphi_{\hat{\cD}_3}^-(a)\ \stackrel{\eqref{e.graph_equality}}{\geq}\ 
 \varphi_{\cA'''}^-(a)-\varphi_{\cA'''}^-(a_n^1+\omega)
+\varphi_{\hat{\cD}_3}^-(a_n^1+\omega)-\varphi_{\hat{\cD}_3}^-(a)}\\
&\geq&
(s-\frac{S}{\alpha^{1/p}-1})\cdot(a_n^1+\omega-a)-\frac{S}{\alpha^{1/p}-1}\cdot(a_n^1+\omega-a)\hspace{4eM}\\
&\stackrel{(\alpha)}{\geq}&\frac{s}{2}\cdot 4\varepsilon_n\geq
4\alpha^{-\frac{M_{n-1}}{p}}>\sup_{\theta\in J-(k-1)\omega}
|\hat{\cD}_{3,\theta}|,
\end{eqnarray*}
 which means
$$\varphi_{\cA'''}^-(a)>\varphi_{\hat{\cD}_3}^+(a)=\varphi_{\hat{\cD}_c}^+(a).$$
Similarly, we obtain
$$\varphi_{\cA'''}^+(b)<\varphi_{\hat{\cD}_1}^-(b)=\varphi_{\hat{\cD}_c}^-(b).$$
Thus, together with the fact that $\inf_{\theta\in
  J-(k-1)\omega}|\partial_\theta\varphi_{\cA'''}^\pm(\theta)|>\sup_{\theta\in
  J-(k-1)\omega}|\partial_\theta\varphi_{\hat{\cD}_c}^\pm(\theta)|,$
we have that $\cA'''$ `downwards' crosses $\hat{\cD}_c$ exactly one
time, which means that in the image the boundary curves of $\cA''$
intersect those of $\cD'$ exactly once. Equivalently,
\begin{displaymath}
\left. \begin{array}{ll} &\exists !\  \theta_1\in J-(k-1)\omega \
\ \textrm{with}\
\varphi_{\cA'''}^+(\theta_1)=\varphi_{\hat{\cD}_c}^+(\theta_1) \
\textrm{and}\\
& \exists ! \ \theta_2\in J-(k-1)\omega \ \ \textrm{with}\
\varphi_{\cA'''}^-(\theta_2)=\varphi_{\hat{\cD}_c}^-(\theta_2).
\end{array}
\right.
\end{displaymath}
Then we have
\begin{eqnarray*}
\varphi_{\cA'''}^-(\theta_1)-\varphi_{\hat{\cD}_c}^-(\theta_1)&=&(\varphi_{\hat{\cD}_c}^+(\theta_1)-\varphi_{\hat{\cD}_c}^-(\theta_1))
-(\varphi_{\cA'''}^+(\theta_1)-\varphi_{\cA'''}^-(\theta_1))\\
&\geq& \inf_{\theta\in
J-(k-1)\omega}|\hat{\cD}_{c,\theta}|-\sup_{\theta\in
J-(k-1)\omega}|\cA_\theta'''|,
\end{eqnarray*}
$$\varphi_{\cA'''}^-(\theta_1)-\varphi_{\hat{\cD}_c}^-(\theta_1)\ \leq\  \sup_{\theta\in
J-(k-1)\omega}|\hat{\cD}_{c,\theta}|$$
$$s/2 \ <\
\partial_\theta(\varphi_{\hat{\cD}_c}^-(\theta)-\varphi_{\cA'''}^-(\theta))\ < \ 2S.$$
Therefore, for $\alpha$ large, we obtain that
\begin{equation}
\frac{(1-|C|)}{4S}\cdot\alpha^{-pM_{n-1}}\ <\ \theta_2-\theta_1\ <\
\frac{4(1-|C|)}{s}\cdot \alpha^{-M_{n-1}/p}.
\end{equation}
Moreover, if we let $\tilde\cA_{n-1}^1=f^{M_{n-1}}(\cA_{n-1}^1)$,
then by the definition of $I_{n}^1$, we have
$\varphi_{\tilde\cA_{n-1}^1}^+(b_n^1+\omega)=\varphi_{\hat{\cD}_2}^-(b_n^1+\omega)$.
Because
$\partial_\theta(\varphi_{\tilde\cA_{n-1}^1}^+-\varphi_{\hat{\cD}_2}^-)<-s/2<0$,
and
$$\varphi_{\tilde\cA_{n-1}^1}^+(\theta_1)-\varphi_{\hat{\cD}_2}^-(\theta_1)>\varphi_{\cA'''}^+(\theta_1)
-\varphi_{\hat{\cD}_2}^-(\theta_1)>\varphi_{\cA'''}^+(\theta_1)-\varphi_{\hat{\cD}_3}^+(\theta_1)=0,$$
we get $\theta_1<b_n^1+\omega$. Similarly, we obtain
$\theta_2>a_n^1+\omega$, which implies that $(\theta_1,\theta_2)\cap
(I_{n}^1+\omega)\neq\emptyset$.

If we now let $P_0=(\theta_1+k\omega, \theta_2+k\omega)$, then by the
selection of $\theta_1$ and $\theta_2$, we have
$$\pi_1(\cA''\cap \mathcal D' )\cap P_0=\emptyset,$$
with $\frac{(1-|C|)}{4S}\cdot\alpha^{-pM_{n-1}}< |P_0| <
\frac{4(1-|C|)}{s}\cdot \alpha^{-M_{n-1}/p}$ and $P_0\cap
(I_{n}^1+(k+1)\omega)\neq\emptyset$.
\qed \medskip

\subsection{Proof of Lemma \ref{l.arc} }

For $\theta\in I_0^2+\omega$, we let 
\[
\zeta(\theta) \ = \ c^-+\min\left\{\varphi_{f(I_0^2\times(\T^1\setminus
  E))}^-(\theta)-\varphi_{f(I_0^2\times (\T^1\setminus E))}^-(a_0^2+\omega),
c^+-c^-\right\} \ ,
\]
and choose $\xi$ to be a small $\mathcal C^1$-perturbation of $\zeta$ which
satisfies $\xi(\theta)\in C$ and $|\partial_\theta \xi(\theta)|\leq S$ by
(\ref{eq:bounddth}) (see Figure~\ref{f.P_1}).
   \begin{figure}[ht!]
  \begin{center}
 \includegraphics[height=0.6\linewidth]{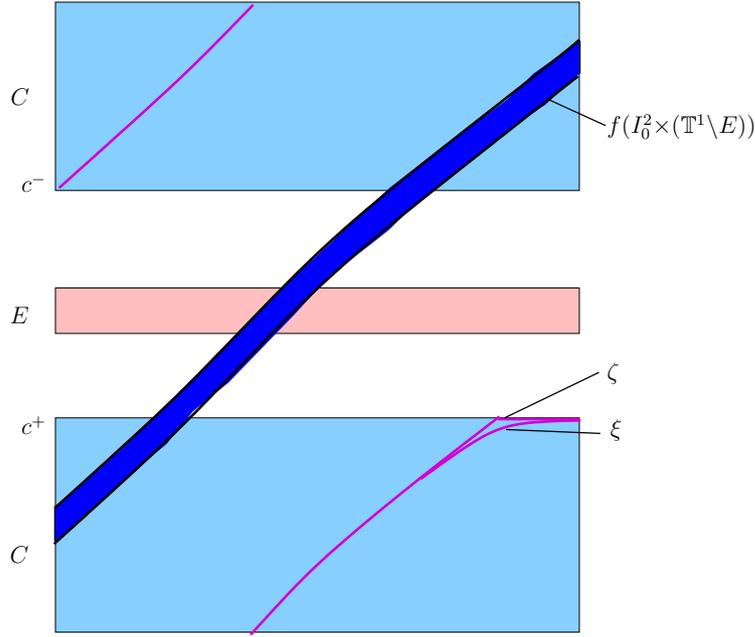}
  \end{center}

\caption{\small   Proof of Lemma~\ref{l.arc}: Construction of the curves $\zeta$ and $\xi$.
  \label{f.P_1}}

\end{figure}
Then, we let $\Xi=\{(\theta,\xi(\theta))\ |\ \theta\in J+\omega\}\subseteq
(J+\omega)\times C$. Since the graph of $\zeta$ is disjoint from
$f(J\times(\kreis\smin E))$, we can choose $\xi$ sufficiently close to $\zeta$
such that this still holds, that is, 
\begin{equation} \label{e.curve_disjointness}
\Xi\cap f(J\times (\T^1\setminus
E)) \ = \ \emptyset \ .
\end{equation}
In order to verify that $\pi_1(\Xi\cap \cA'')$ is an iterval, we consider the
preimage $f^{-k}(\Xi)$ and show that this curve intersects $f^{-k}(\cA'')$ in a
transversal way (see Figure~\ref{f.P_2}). 

Let $\Upsilon=f^{-1}(\Xi)=:\{(\theta,v(\theta))\ |\ \theta\in J\}$. Then, by
\eqref{e.curve_disjointness} above, $\Upsilon\subseteq J\times E$. Moreover, as
$v(\theta)\in E$ for all $\theta\in J$, we have
\begin{eqnarray*}
|\partial_\theta v(\theta)|&=&|(\partial_x
f_{\theta+\omega}^{-1})(\xi(\theta+\omega))\cdot \partial_\theta
\xi(\theta+\omega) +(\partial_\theta
f_{\theta+\omega}^{-1})(\xi(\theta+\omega))|\\
&=&\left|\frac{1}{(\partial_x
f_\theta)(v(\theta))}\cdot\partial_\theta
\xi(\theta+\omega)-\frac{(\partial_\theta
f_\theta)(v(\theta))}{(\partial_x f_\theta)(v(\theta))}\right|\\
&\leq& \alpha^{-2/p}\cdot S+\alpha^{-2/p}\cdot
S\stackrel{(\alpha)}{\leq} S.
\end{eqnarray*}

   \begin{figure}[ht!]
  \begin{center}
 \includegraphics[height=0.7\linewidth]{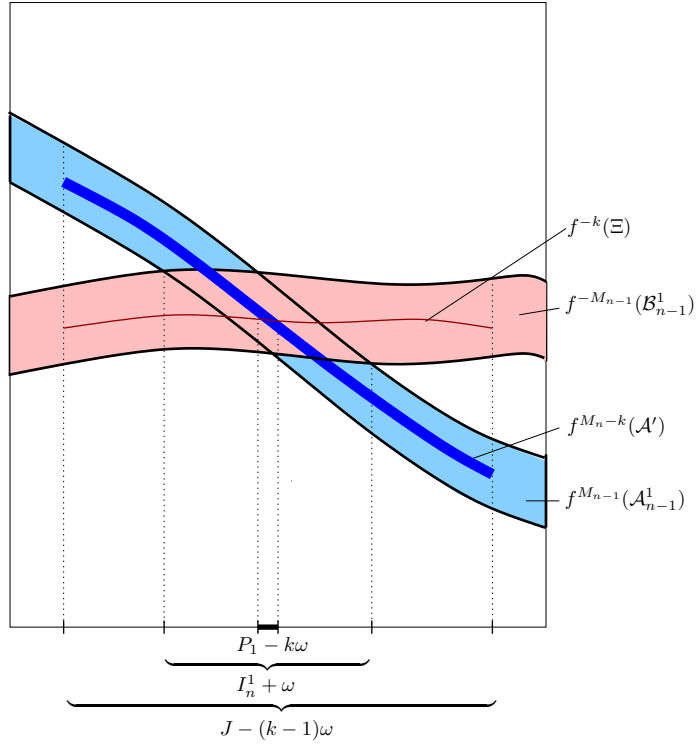}
  \end{center}

\caption{\small Transversal intersection between $f^{-k}(\Xi)$ and $f^{-k}(\cA'')=f^{M_n-k}(\cA')$.
  \label{f.P_2}}

\end{figure}
Let $\cA''':=f^{M_n-k}(\cA')$ as before and
$\Psi(\theta):=f_{\theta+(k-1)\omega}^{-(k-1)}(\upsilon(\theta+(k-1)\omega))$,
where $\theta\in J-(k-1)\omega$. Since $J\subseteq I_{n-1}^2\subseteq
I_{n-2}^2\subseteq \ldots \subseteq I_0^2$ by (\ref{claimresult-2}) and
(\ref{equ-c-4-1}), condition $(\mathcal{X})_{n-1}$ yields
\begin{equation}\label{equ-2}
 J\cap \cV_{n-1}^+=\emptyset.
 \end{equation}
 Further, as $J-(k-M_{n-1}-1)\omega\ssq I_{n-1}^1+(M_{n-1}+1)\omega$ by
 (\ref{claimresult-2}), (\ref{equ-c-4-1}), conditions $(\cX)_{n-1}$ and
 $(\cY'')_{n-1}$ imply $(J-(k-M_{n-1}-1)\omega)\cap
 \cW^-_{n-1}=\emptyset$. Together with (\ref{est_disjoint}), Lemma
 \ref{lem-ess_back} yields
\[
f^{-(k-M_{n-1}-1)}(\Upsilon)\ \subseteq \ \big(J-(k-M_{n-1}-1)\omega\big)\times
E \ \ssq \cB^1_{n-1} \ .
\] Thus,  we obtain
\begin{equation}\label{equ-3}
f^{-(k-1)}(\Upsilon)\subseteq f^{-M_{n-1}}(\cB_{n-1}^1) \subseteq
(I_{n-1}^1+\omega)\times E.
\end{equation}
Moreover, Lemma~\ref{l.interval_disjoint} yields that $(J-l\omega)\cap
(\cI_n+\omega)=\emptyset$ for $l=0,\ldots, k-2$. Therefore (\ref{equ-2}) and
the fact that $J-(k-1)\omega\ssq I_{n-1}^1+\omega$ by (\ref{claimresult-2}) and
(\ref{equ-c-4-1}) allow to apply Lemma \ref{estimate.back} in order to obtain
$$\sup_{\theta\in J-(k-1)\omega}|\partial_\theta\Psi|\ \leq\ \frac{S}{\alpha^{1/p}-1}.$$
Then by (\ref{esti-A'''-2}), we get $\inf_{\theta}
|\partial_\theta\varphi_{\cA'''}^\pm(\theta)|\stackrel{(\alpha)}{>} \sup_\theta
|\partial_\theta\Psi|$. Thus, by the same argument as in Section~\ref{s.P_0},
$\cA'''$ is above $f^{-(k-1)}(\Upsilon)$ on the left end of $J-(k-1)\omega$ and
below on the right end by (\ref{equ-3}) (see Figure \ref{f.P_1}). Therefore the
boundary curves of $\cA'''$ intersect $f^{-(k-1)}(\Upsilon)$ exactly once, which
means $\pi_1\left(\cA'''\cap f^{-(k-1)}(\Upsilon)\right)$ is an interval. Hence
$P_1=\pi_1(\cA'' \cap \Xi)=\pi_1\left(\cA'''\cap
  f^{-(k-1)}(\Upsilon)\right)+k\omega$ is an interval.
\qed\medskip

\subsection{Proof of Lemma \ref{l.extremal_positions} }

We will first prove that $(\cX)_n$ actually holds for $\tau=\tau^-,
\tau^+$. By Lemma \ref{l.perturbation_closereturn} and Proposition
\ref{finite_times}, we have $|I_n^\iota(\tau^\pm)|\leq
\varepsilon_n,\ \iota=1,2$, and $d(I_n^{1}(\tau^\pm)+k\omega,
I_n^2(\tau^\pm))=4\varepsilon_n$. If $d_H$ denotes the Hausdorff
distance, then $d_H(I_n^1(\tau^\pm)+k\omega, I_n^2(\tau^\pm))\leq
5\varepsilon_n$. Include $\tau^\pm$ throughout the proof.   The
Diophantine condition $\omega\in \cD(\gamma,\nu)$ implies
$d(I_n^\iota, I_n^\iota+j\omega)> 8\varepsilon_n$ for $\iota=1,2,\ j
\in [1, (2K_n+1)M_n] $ by (\ref{e.M_j}) and (\ref{e.eps_j}),
provided $\alpha$ is large. Then, given $l\in [1, 2K_nM_n]\smin\{k\}$,
we have
\begin{eqnarray*}
d(I_n^2, I_n^1+l\omega)\geq d(I_n^1+l\omega, I_n^1+k\omega)-
d_H(I_n^1+k\omega, I_n^2)
\ > \ 8\varepsilon_n-5\varepsilon_n=3\varepsilon_n,\ 
\end{eqnarray*}
and similarly
\begin{eqnarray*}
d(I_n^1, I_n^2+l\omega)\ \geq  \ 3\varepsilon_n \ .
\end{eqnarray*}
Moreover, by the choice of $\tau^-,\tau^+$ in Section~\ref{CloseReturn}, we have
$d(I_n^2, I_n^1+k\omega)>3\varepsilon_n$. Thus, $(\cX)_n$ is satisfied.

Hence, Proposition \ref{finite_times} implies that the two components of
$\cI_{n+1}$ are non-empty, which means $f^{M_n}(\cA_n^2)\cap
f^{-M_n}(\cB_n^2)\neq\emptyset$. Then Lemma \ref{l.inclusions} implies that
$\cB''\cap f^{M_n}(\cA_n^2)\neq \emptyset$. Moreover, Lemma \ref{l.P_0} implies
that $P_0\cap (I_n^1+(k+1)\omega)\neq \emptyset$. Since $|P_0|\leq
2\varepsilon_n$ (using the estimate from Lemma~\ref{l.P_0} and (\ref{e.eps_j})),
and $d(I_n^1+k\omega, I_n^2)=4\varepsilon_n$, we get $P_0 \cap
(I_n^2+\omega)=\emptyset$. When $\tau=\tau^-$, then since $I_n^2$ is to the
right of $I_n^1+k\omega$, we have $f^{M_n}(\cA_n^2)\subseteq \cR$, which means
$\cB''$ intersects $\cR$. Conversely, when $\tau=\tau^+$ we have
$f^{M_n}(\cA_n^2)\subseteq \cL$ since $I_n^2$ is to the left of $I_n^1+k\omega$
and thus $\cB''$ intersects $\cL$.\qed\medskip

\footnotesize

%\bibliography{dynamics} \bibliographystyle{unsrt}

\end{document}